\documentclass[final]{siamart0516}
\setlength{\parindent}{0.2in}
\usepackage{braket,amsfonts,color,makecell}
\usepackage{xcolor}
\usepackage{framed}
\usepackage[justification=centering]{caption}
\usepackage{shadethm}
\usepackage{epstopdf}
\usepackage{algpseudocode}
\usepackage{url}
\usepackage{verbatim}
\usepackage{amsmath} 
\usepackage{amssymb}  
\usepackage{amsfonts}
\usepackage{acronym}
\usepackage[normalem]{ulem}
\usepackage{graphicx}
\usepackage{epsfig}
\usepackage{cite}
\usepackage{algorithm}

\newtheorem{assumption}{Assumption}
\newtheorem{remark}{Remark}
\def\bko{{\rm 1\kern-.17em l}}

\newcommand{\an}[1]{{\color{black}#1}}
\newcommand{\fy}[1]{{\color{black}#1}}
\newcommand{\vvs}[1]{{\color{black}#1}}
\newcommand{\vus}[1]{{\color{black}#1}}
\newcommand{\us}[1]{{\color{black}#1}}
\newcommand{\fys}[1]{{\color{black}#1}}
\newcommand{\fyfy}[1]{{\color{black}#1}}

\def\be{\begin{enumerate}}
\def\ee{\end{enumerate}}

\newtheorem{property}{Property}

 \newcommand{\remove}[1]{}
\newcommand{\EXP}[1]{\mathsf{E}\!\left[#1\right] }

\def\sF{\mathcal{F}}

\def\Real{\mathbb{R}}
\def\g{\gamma}

\def\e{\epsilon}
\def\a{\alpha}

\pagenumbering{arabic}

\usepackage{tikz}
\usetikzlibrary{decorations.pathreplacing,calc}
\newcommand{\tikzmark}[1]{\tikz[overlay,remember picture] \node (#1) {};}

\newcommand*{\AddNote}[4]{%
    \begin{tikzpicture}[overlay, remember picture]
        \draw [decoration={brace,amplitude=0.5em},decorate,ultra thick,black]
            ($(#3)!(#1.north)!($(#3)-(0,1)$)$) --  
            ($(#3)!(#2.south)!($(#3)-(0,1)$)$)
                node [align=center, text width=4.5cm, pos=0.5, anchor=west] {#4};
    \end{tikzpicture}
}%

\begin{document}
\title{On stochastic and deterministic quasi-Newton methods for non-strongly convex optimization: Asymptotic convergence and rate analysis\thanks{A preliminary version of this paper appeared in the IEEE $55^\text{th}$ Conference on Decision and Control (see reference \cite{FarzadCDC16}).}
}

\author{Farzad Yousefian\thanks{School of Industrial Engineering \& Management, Oklahoma State University, Stillwater, OK 74078, USA
    (\email{farzad.yousefian@okstate.edu}, \url{https://sites.google.com/site/farzad1yousefian}).}
    \and 
     Angelia Nedi{\' c}\thanks{School of Electrical, Computer and Energy Engineering, Arizona State University, Tempe, AZ 85287, USA
    (\email{Angelia.Nedich@asu.edu}, \url{https://ecee.engineering.asu.edu/project/angelia-nedich}).  Nedi\'c gratefully acknowledges the support of the NSF through grant CCF-1717391.}
    \and  
    Uday V. Shanbhag\thanks{Industrial \& Manufacturing Engineering,
Pennsylvania State University,  University Park, State College, PA 16802, USA
    (\email{udaybag@psu.edu}, \url{http://www.personal.psu.edu/vvs3}).}
}

\maketitle

\begin{abstract} Motivated by applications arising from large scale
optimization and machine learning, we consider stochastic quasi-Newton
(SQN) methods for solving unconstrained convex optimization problems.
Much of the convergence analysis of SQN methods, in both full and
limited-memory regimes, requires the objective function to be strongly
convex. However, this assumption is fairly restrictive and does not
hold in many applications. To the best of our knowledge, no rate
statements currently exist for SQN methods in the absence of such an
assumption. Also, among the existing first-order methods for addressing
stochastic optimization problems with merely convex objectives, those
equipped with provable convergence rates employ averaging. However,
this averaging technique has a detrimental impact on inducing sparsity.
Motivated by these gaps, we consider optimization problems with
non-strongly convex objectives with Lipschitz but possibly unbounded
gradients.  The main contributions of the paper are as follows: (i) To
address large scale stochastic optimization problems, we develop an
iteratively regularized stochastic limited-memory BFGS (IRS-LBFGS)
algorithm, where the stepsize, regularization parameter, and the
Hessian inverse \fy{approximation} are updated iteratively. We
establish convergence of the iterates (with no averaging) to an optimal
solution of the original problem both in an almost-sure sense and in a
mean sense. The convergence rate is derived in terms of the objective
function values and is shown to be
$\mathcal{O}\left(1/k^{\left({1}/{3}-\e\right)}\right)$, where $\e$ is
an arbitrary small positive scalar; (ii) In deterministic regimes, we
show that \fys{the algorithm} displays a rate
\fys{$\mathcal{O}({1}/{k^{1-\e}})$. We present numerical experiments}
performed on a large-scale text classification problem \fy{and compare
IRS-LBFGS with standard SQN methods as well as first-order methods such
as SAGA and IAG}. 
\end{abstract}
\begin{keywords}
   stochastic optimization, quasi-Newton, regularization, large scale optimization
\end{keywords}

\begin{AMS}
 65K05, 90C06, 90C30, 90C53
\end{AMS}
\section{Introduction}\label{sec:intro}
\fy{We} consider the following stochastic optimization problem:
\begin{equation}\label{eqn:problem}
\min_{x \in \Real^n} f(x)\fy{\ \triangleq \ }\EXP{F(x,\xi(\us{\omega}))},\tag{SO}
\end{equation}
where $F: \Real^n\times\us{\Real^d}\to\mathbb{R}$ is a \fy{real-valued} function, the random
vector $\xi$ is defined as {$\xi:\Omega
\rightarrow \Real^d$}, \us{$(\Omega,{\cal F}, \mathbb{P})$ denotes the
associated probability space\fy{,} and the expectation \us{$\EXP{F(x,\xi)}$} is
taken with respect to $\mathbb{P}$}. Problem \eqref{eqn:problem} provides a general framework that can capture a wide range of applications in operations research, machine learning, statistics and control to name a few \fys{(\vus{cf.}\cite{bottou-2010,nocedal15}). Addressing problem \eqref{eqn:problem}} has led
	to significant \vus{progress via} Monte\vus{-}Carlo sampling techniques. \fy{\vus{Amongst such schemes}, stochastic approximation
	(SA) methods~\cite{robbins51sa,nemirovski_robust_2009} have \vus{proved particularly} popular.} \fy{The standard SA method, introduced by Robbins and
	Monro~\cite{robbins51sa}, \vus{for solving} \eqref{eqn:problem}, produces a sequence $\{x_k\}$ using the following update rule}
\begin{equation}\label{eqn:SA}\tag{SA}   
x_{k+1}:=x_k-\g_k\nabla \us{F}(x_k,\xi_k), \quad \hbox{for }k\geq 0,
\end{equation}
where \fy{$x_0 \in \Real^n$ is a randomly generated initial point,} $\g_k>0$ denotes the stepsize and $\nabla F(x_k,\xi_k)$ denotes
\fys{a \vvs{sampled} gradient of $f$} with respect to $x$
at $x_k$. \fy{SA} schemes are
	characterized by several disadvantages, including \vus{a} poorer
		rate of convergence (than their deterministic counterparts) and
		the detrimental impact of conditioning on their performance.
		\fys{In deterministic regimes, the BFGS} method, named after Broyden, Fletcher, Goldfarb, and Shanno, is \fys{amongst the \vus{most} popular quasi-Newton methods} \cite{goldfarb70, fletcher70}, \vus{displaying} a \fy{superlinear} convergence rate without requiring \fy{second-order} information. Addressing large scale deterministic problems, the limited-memory variant of the BFGS method, denoted by LBFGS, was developed \fy{and attains} an $\vus{R}$-linear convergence rate under strong convexity of the objective function (see Theorem 6.1 in~\cite{nocedal89}). Recently, there has been a growing interest in applying \fys{stochastic quasi-Newton (SQN) methods} for solving  large-scale optimization and machine learning problems. In these methods, $x_k$ is \fy{updated} by the following rule:
\begin{equation}\label{eqn:SQN}\tag{SQN}
x_{k+1}:=x_k-\g_kH_k\nabla \us{F}(x_k,\xi_k), \quad \hbox{for }k\geq 0,
\end{equation}
where $H_k \succeq 0$ is an approximation of the inverse of the
\fy{Hessian} at iteration $k$ that incorporates the curvature
information of the objective function \us{within} the algorithm. The
convergence of this class of algorithms can be derived under a careful
choice of \fy{$H_k$} and the stepsize sequence \fy{$\{\g_k\}$}. In
particular, \fy{the} boundedness of the eigenvalues of $H_k$ is an
important factor in achieving global convergence in convex and
nonconvex problems (cf. \cite{Fukushima01,Bottou09}). While in
\cite{Schraudolph07} the performance of SQN methods \vus{was found} to be
favorable in solving high dimensional problems, Mokhtari et
al.~\cite{mokh14} considered stochastic optimization problems with
strongly convex objectives and developed a regularized BFGS method
(RES) \an{by updating $H_k$ according to} a modified version
of BFGS update rule to assure convergence. To address large scale
applications, limited-memory variants were employed to ascertain
scalability in terms of the number of
variables~\cite{Mokhtari15,nocedal15}. \vus{Recent extensions have included} a stochastic quasi-Newton method~\cite{wang14} for solving nonconvex stochastic optimization problems \vus{and  a constant stepsize} variance reduced SQN method~\an{\cite{Lucchi15} for smooth strongly convex problems characterized by a linear convergence rate}. \fy{\vus{Finally}, an incremental quasi-Newton (IQN) method with a local superlinear convergence rate has been recently developed
for addressing the sum of a large number of strongly convex functions~\cite{Iqn17}.}

\noindent {\bf Motivation:} \fy{In} \vus{both the full and limited
memory variants of the SQN methods \fy{in the
literature}~\cite{nocedal15, Mokhtari15,Iqn17}}, it is uniformly
assumed that the objective function is strongly convex. This assumption
plays an important role in deriving the rate of convergence of the
algorithm. However, in many applications, the objective function is
convex, but not strongly convex \fys{such as, \vus{when considering}
the logistic regression function.} While \vus{a} lack of strong
convexity might lead to \vvs{slower} convergence \vus{in practice}, no
\vus{rigorous support for} the convergence rate is \vus{currently}
\vvs{available} in the literature of SQN methods.  A \an{simple} remedy
to address this challenge is to regularize the objective function with
the term $\frac{1}{2}\mu\|x\|^2$ and solve the approximate problem of
the form \fys{$\min_{x \in \Real^n} f(x)+\frac{\mu}{2}\|x\|^2$}, where
$\mu>0$ is the regularization parameter. Several challenges arise in
applying this technique. A drawback of this technique is that the
optimal solution to the \fys{regularized problem} is not
\fy{necessarily} an optimal of the original problem
\eqref{eqn:problem}. \fy{\vus{Yet, a}nother challenge arises from} the
choice of $\mu$. While \vus{larger} values of $\mu$ may result in large
deviations from the true optimal \fy{solution(s)}, 
choosing a small $\mu$ leads to a deterioration of \fy{the constant
factor} in the convergence rate of the algorithm. \fy{This issue has
been addressed to some extent with the help of averaging techniques. In
particular, \vus{under mere convexity, most} first-order methods admit convergence rate
guarantees under averaging.  For example, averaging SA schemes achieve
a} rate of $\mathcal{O}\left(\frac{M}{\sqrt{k}}\right)$, where $M$ is
an upper bound on the norm of the subgradient
\an{(see~\cite{nemirovski_robust_2009,Nedic2014}).} \fy{In past few
years, fast incremental gradient methods with improved rates of
convergence have been developed (see~\cite{Saga14,Sag17,Piag16,Iag17}).
Of these, addressing the merely convex case, SAGA with averaging
achieves a sublinear convergence rate
$\mathcal{O}\left(\frac{N}{k}\right)$ \vus{where $N$ denotes the number
of blocks}, while in the presence of strong convexity, non-averaging
variants of SAGA and IAG admit a linear convergence rate assuming that
the function satisfies some smoothness conditions. 

A \vus{crucial concern that plagues} the aforementioned schemes is that
the averaging technique has a detrimental impact on inducing sparsity.
In the case of incremental methods such as SAGA and IAG, despite the
fast convergence speed, the application of these methods is \vus{impaired by} the excessive memory requirements. For standard SAGA and IAG, the
memory requirements are $\mathcal{O}\left(nN\right)$.} \fy{Accordingly,
in this paper, our main goal lies in addressing such shortcomings in
absence of strong convexity and developing a first-order method
equipped with a rate of convergence for the generated non-averaged
iterates}. 

\noindent \textbf{Related research on regularization:} In \fy{optimization}, in order to obtain solutions with desirable properties, it is common to regularize the problem \eqref{eqn:problem} as follows
\begin{equation}\label{eqn:problemReg2}
\min_{x \in \Real^n} f_\mu(x):=f(x)+\mu R(x),
\end{equation} where $R:\Real^n\to \Real$ is a proper convex function
and the scalar $\mu>0$ is the regularization parameter. The properties
of the regularized problem and its relation \vus{to} the original problem
have been investigated by different researchers. Mangasarian and his
colleagues appear among the first researchers who studied exact
regularization of linear and nonlinear programs
\cite{Mangasarian79nonlinearperturbation, Mangasarian91}. A
regularization is said to be exact when an optimal solution of
\eqref{eqn:problemReg2}, is also optimal for problem
\eqref{eqn:problem} if $\mu$ is small enough. Tseng et al.
\cite{Tseng08,Tseng09} established the necessary and sufficient
conditions of exact regularization for convex programs and derived
error bounds for inexact regularized convex problems. \fy{In a similar veing}, exact regularization of variational \fy{inequality problems} has been studied
in \cite{Charitha17}. A challenging question is concerned about the
choice of the regularization parameter $\mu$. A common approach to find
an acceptable value for $\mu$ is through a two-loop scheme where in the
inner loop, problem \eqref{eqn:problemReg2} is solved for a fixed value
of $\mu$, while \fy{$\mu$ is tuned in the outer loop}. The main
drawback of this approach is that\fy{,} in general, there is no
guidance on the tuning rule for $\mu$. In addition, this approach is
computationally inefficient. 
\vus{Furthermore}, tuning rules may result in losing the desired properties of the sample
path of the solutions to regularized problems.  \fys{In
this work, we address this issue through employing} an iterative
single-loop algorithm where we update the regularization parameter
$\mu$ at each iteration of the scheme and reduce it iteratively to
converge to zero~\cite{Farzad3,koshal13}. 

\noindent {\bf Contributions:} We consider stochastic optimization problems with non-strongly convex objective functions and Lipschitz but possibly unbounded gradient mappings. Our main contributions are as follows:\\
(i) \vvs{\bf Asymptotic convergence}:  We develop \fy{an iteratively} regularized SQN method \fy{where} the stepsize, regularization parameter, and the \fy{Hessian inverse approximation denoted by $H_k$} are updated iteratively. We assume that $H_k$ satisfies a set of general assumptions on its eigenvalues and its dependency \vvs{on} the uncertainty. The \vvs{asymptotic} convergence of the \fy{method} is established under a \fy{suitable} choice of \fy{an error} function. \fy{For the sequence of the iterates $\{x_k\}$ produced by the algorithm}, we obtain a set of suitable conditions on the stepsize and regularization sequences for which $f(x_k)$ converges to the optimal objective value, i.e., $f^*$,  of \eqref{eqn:problem} \vvs{both} in an almost sure sense and in \fy{a mean sense}. We also derive an upper bound for \fy{$f(x_k)-f^*$}.\\
(ii) \vvs{\bf Rate of convergence for regularized LBFGS methods}: To
address large scale stochastic optimization problems, motivated by our
earlier work~\cite{FarzadCDC16} on SQN methods for small scale
stochastic optimization problems with non-strongly convex objectives,
\fy{we develop an iteratively regularized stochastic limited-memory
BFGS scheme (see Algorithm \ref{algorithm:IR-S-BFGS})}. We show that
under a careful choice of the \fy{update rules for the stepsize and}
regularization parameter, \fy{Algorithm \ref{algorithm:IR-S-BFGS}}
displays a convergence rate
$\mathcal{O}\left(k^{-\left(\frac{1}{3}-\e\right)}\right)$ in terms of
the objective \fy{function} values, where $\e$ is an arbitrary small
positive scalar. \fy{Similar to standard stochastic LBFGS schemes, the
memory requirement is independent of $N$ and is \vus{$\mathcal{O}\left(mn\right)$}, where $m \ll n$ \vus{denotes} the memory parameter \vus{in the LBFGS scheme}.
In the deterministic case, we} show that the convergence rate
\vvs{improves} to $\mathcal{O}\left(\frac{1}{k^{1-\e}}\right)$.
\fy{Both of these convergence rates appear to be new for the class of
deterministic and stochastic quasi-Newton methods.}\\ \textbf{Outline
of the paper:} The rest of the paper is organized as follows.
Section \ref{sec:alg} presents the general framework of the proposed SQN
algorithm and the sets of main assumptions. In Section \ref{sec:conv}, we \vvs{prove the asymptotic
convergence of the iterates produced by} the scheme in both almost sure and \vvs{a mean sense} and
derive the a general error bound. In Section \ref{sec:BFGS}, we develop \fy{an iteratively} regularized stochastic LBFGS method (\fy{Algorithm \ref{algorithm:IR-S-BFGS}}) and derive \fy{its} convergence rate. The rate
analysis is also provided for the deterministic variant of this scheme.  We \vvs{then}
present the \fy{numerical experiments} performed on a large scale
classification problem in Section \ref{sec:num}. The paper ends with some
concluding remarks in Section \ref{sec:conc}.

 \textbf{Notation:} A vector $x$ is assumed to be a column vector and
$x^T$ denotes its transpose, \an{while} $\|x\|$ {denotes} the Euclidean
vector norm, i.e., $\|x\|=\sqrt{x^Tx}$.  We write \textit{a.s.} as the
abbreviation for ``almost surely''.  \an{For a symmetric matrix $B$, 
$\lambda_{\min}(B)$ and $\lambda_{\max}(B)$ \vus{denotes the
smallest and largest eigenvalue of $B$}, respectively.} We use $\EXP{z}$ to
denote the expectation of a random variable~$z$. A function $f:X
\subset \mathbb{R}^n\rightarrow \mathbb{R}$ is said to be strongly
convex with parameter $\mu>0$, if  $f(y)\geq f(x)+\nabla
f(x)^T(y-x)+\frac{\mu}{2}\|x-y\|^2,$ for any $x,y \in X$.  A mapping
$F:X \subset \mathbb{R}^n\rightarrow \mathbb{R}$ is \vus{said to be} Lipschitz
continuous with parameter $L>0$ if for any $x, y \in X$, we have
$\|F(x)-F(y)\|\leq L\|x-y\|$. For a continuously differentiable
function $f$ with Lipschitz gradients with parameter $L>0$, we have
$f(y)\leq f(x)+\nabla f(x)^T(y-x)+\frac{L}{2}\|x-y\|^2,$ for any $x,y
\in X$. For a vector $x \in \Real^n$ and a \fys{nonempty} set $X
\subset \Real^n$, the Euclidean distance of $x$ from $X$ is denoted by
$dist(x,X)$. We denote the optimal objective value of problem
\eqref{eqn:problem} by $f^*$ and the set of the optimal solutions by
$X^*$.

\section{Outline of the SQN scheme}\label{sec:alg}
In this section, we describe a general SQN scheme for solving problem~\eqref{eqn:problem} and present the main assumptions. Let $x_0\in \Real^n$ be an arbitrary initial point, and $x_k$ be generated by the following recursive rule
\begin{align}\label{eqn:LM-cyclic-reg-BFGS}\tag{IR-SQN}
x_{k+1}:=x_k -\gamma_kH_k\left(\nabla F(x_k,\xi_k)+ \mu_k(x_k-x_0)\right), \quad \hbox{for all } k \geq 0.
\end{align}
Here, $\g_k$ and $\mu_k$ are the steplength and the regularization parameter, respectively. $H_k \in \Real^{n\times n}$ is a matrix that \fy{contains} the curvature information of the objective function. The \eqref{eqn:LM-cyclic-reg-BFGS} scheme can be seen as a regularized variant of \fy{the} classical stochastic SQN \fy{method}. Here we regularize the gradient map by the term $\mu_k(x_k-x_0)$ to \fy{induce} the strong monotonicity property. In \fy{the} absence of strong convexity of $f$, unlike the classical schemes where $\mu_k$ is maintained fixed, we let $\mu_k$ be updated and \fy{decreased} to zero. Throughout, we let $\sF_k$ denote the history of the method up to time $k$, i.e., 
$\sF_k\fy{\ \triangleq \ }\{x_0,\xi_0,\xi_1,\ldots,\xi_{k-1}\}$ for $k\ge 1$, and $\sF_0\triangleq\{x_0\}$.  
\begin{assumption}\label{assum:convex}  Consider problem~\eqref{eqn:problem}. Let the following hold:
\begin{enumerate}
\item [(a)]  \an{The function $f(x)$ is convex over $\Real^n$.}  
\item[(b)] $f(x)$ is continuously differentiable \us{with Lipschitz
	continuous gradients} over $\Real^n$ with parameter $L>0$. 
	\item[(c)] \ The optimal solution set of \fy{the problem} is nonempty. 
\end{enumerate}
\end{assumption}
Next, we state the assumptions on the random variable $\xi$ and the properties of the stochastic estimator of the gradient mapping.  
\begin{assumption}[Random variable $\xi$]\label{assum:main}
\begin{enumerate}
\item[(a)]  \fy{Vectors $\xi_k \in \mathbb{R}^d$ are i.i.d.\ realizations of the random variable $\xi$} for any $k \geq 0$; 
\item[(b)] The stochastic gradient mapping $\nabla F(x, \xi)$ is an unbiased estimator of $\nabla f(x)$, i.e.
$\EXP{\nabla F(x,\xi)}=\nabla f(x)$ for all $x$, and has a bounded variance,
	i.e., there exists a \us{scalar} $\nu>0$ such that $\EXP{\|\nabla F(x,\xi)-\nabla f(x)\|^2} \leq \nu^2,$ for all $x \in \mathbb{R}^n$.
\end{enumerate}
\end{assumption} 
The next assumption {pertains to} the properties of $H_k$.
\begin{assumption}[Conditions on matrix $H_k$]\label{assump:Hk}
Let the following hold for all $k\geq 0$:\\ 
(a) \  The matrix $H_k\in \Real^{n\times n}$ is $\sF_{k}$-measurable, i.e., $\EXP{H_k \mid \sF_{k}}=H_k$. \\
(b) \ Matrix $H_k$ is symmetric and positive definite and satisfies the following condition: There exist positive scalars $\lambda_{\min},\lambda$ and scalar $\alpha\leq 0$ such that $$\lambda_{\min}\mathbf{I} \preceq H_k \preceq \lambda \mu_k^\alpha\mathbf{I},\qquad  \hbox{for all }k\geq 0,$$ \fys{where $\mu_k$ is the regularization parameter in \eqref{eqn:LM-cyclic-reg-BFGS}.}
\end{assumption}
Assumption \ref{assump:Hk} holds for the stochastic gradient method where $H_k$ is the identity matrix, $\lambda_{\min}=\lambda=1$ and $\alpha=0$. In the case of employing an appropriate LBFGS update rule that will be discussed  in Section \ref{sec:BFGS}, the maximum eigenvalue is obtained in terms of  the regularization parameter.

\section{Convergence analysis}\label{sec:conv}
In this section, we present the convergence analysis of the \eqref{eqn:LM-cyclic-reg-BFGS} method. Our discussion starts by some preliminary definitions and properties. After obtaining a recursive error bound for the method in Lemma \ref{lemma:main-ineq}, we show a.s. convergence in Proposition \ref{prop:a.s}, establish convergence in mean, and derive an error bound in Proposition \ref{prop:mean}.
\begin{definition}[Regularized function and gradient mapping]\label{def:regularizedF}
Consider the sequence $\{\mu_k\}$ of positive \an{scalars} and the starting point of the algorithm \eqref{eqn:LM-cyclic-reg-BFGS}, i.e., $x_0$.
The regularized function $f_k$ and its gradient are defined as follows \fys{for all $k\geq 0$}:
\begin{align*}
f_k(x)\triangleq  f(x)+\frac{\mu_k}{2}{\|x-x_0\|^2},\quad \nabla f_k(x)\triangleq\nabla f(x)+\mu_k(x-x_0).
\end{align*}
\end{definition}
In a similar way, we denote the regularized stochastic function $F(x,\xi)$ and its gradient with $F_k$ and $\nabla F_k$ for any $\xi$, respectively. 
\begin{property}[{Properties of a regularized function}]\label{proper:propsfk} We have:
\begin{itemize}
\item [(a)] $f_k$ is strongly convex with a parameter $\mu_k$.
\item [(b)] $f_k$ has Lipschitzian gradients with parameter $L+\mu_k$.
 \item [(c)] $f_k$ has a unique minimizer over $\Real^n$, denoted by $x^*_k$. Moreover, for any $x \in \Real^n$, 
 \[2\mu_k (f_k(x)-f_k(x^*_k)) \leq \|\nabla f_k(x)\|^2\leq 2(L+\mu_k) (f_k(x)-f_k(x^*_k)).\]
\end{itemize}
\end{property}
The existence and uniqueness of $x^*_k$ in Property~\ref{proper:propsfk}(c) 
is due to the strong convexity of the function $f_k$ (see, for example, Section 1.3.2 in~\cite{Polyak87}), 
while the relation for the gradient is known to hold for a 
strongly convex function with a parameter $\mu$ that also has Lipschitz gradients with a parameter 
$L$ (see Lemma 1 on page 23 in~\cite{Polyak87}). 
In the convergence analysis, we make use of the following result, which can be found
in~\cite{Polyak87} \fy{(see Lemma 10 on page 49)}.

\begin{lemma}\label{lemma:probabilistic_bound_polyak}
Let $\{v_k\}$ be a sequence of nonnegative random variables, where 
$\EXP{v_0} < \infty$, and let $\{\a_k\}$ and $\{\beta_k\}$
be deterministic scalar sequences such that:
\begin{align*}
& \EXP{v_{k+1}|v_0,\ldots, v_k} \leq (1-\alpha_k)v_k+\beta_k
\quad a.s. \ \hbox{for all }k\ge0, 
\end{align*}
where $0 \leq \alpha_k \leq 1$, $\beta_k \geq 0$, $\sum_{k=0}^\infty \alpha_k =\infty$, $\sum_{k=0}^\infty \beta_k < \infty$, and $\lim_{k\to\infty}\,\frac{\beta_k}{\alpha_k} = 0.$ Then, $v_k \rightarrow 0$ almost surely.
\end{lemma}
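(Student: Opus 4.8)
The plan is to recognize this statement as an instance of the Robbins--Siegmund almost-supermartingale convergence theorem and to extract the conclusion from the summability it yields. First I would reduce the hypothesis to a genuine supermartingale inequality. Since $\sum_{k}\beta_k<\infty$, the tail sums $r_k\triangleq\sum_{j\ge k}\beta_j$ are finite, nonincreasing, and satisfy $r_k\to 0$. Setting $w_k\triangleq v_k+r_k$ and writing $\mathcal{G}_k\triangleq\sigma(v_0,\ldots,v_k)$, the assumed bound together with $\alpha_kv_k\ge 0$ gives
\[
\EXP{w_{k+1}\mid\mathcal{G}_k}\ \le\ (1-\alpha_k)v_k+\beta_k+r_{k+1}\ =\ w_k-\alpha_kv_k\ \le\ w_k\qquad a.s.,
\]
while $\EXP{w_0}=\EXP{v_0}+r_0<\infty$ and, inductively, $\EXP{w_k}<\infty$ for every $k$. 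Hence $\{w_k\}$ is a nonnegative supermartingale with respect to $\{\mathcal{G}_k\}$.

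By the martingale convergence theorem, $w_k$ converges almost surely to an integrable nonnegative limit $w_\infty$; since $r_k\to 0$ deterministically, this yields $v_k\to v_\infty\triangleq w_\infty$ a.s. To identify $v_\infty$, I would recover the ``energy'' term discarded above: rearranging the supermartingale inequality gives $\EXP{\alpha_kv_k}\le\EXP{w_k}-\EXP{w_{k+1}}$, and summing telescopes to $\sum_{k=0}^{\infty}\EXP{\alpha_kv_k}\le\EXP{w_0}<\infty$; monotone convergence then gives $\sum_{k=0}^{\infty}\alpha_kv_k<\infty$ a.s.

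Finally I would combine the three facts $v_k\to v_\infty$ a.s., $\sum_k\alpha_kv_k<\infty$ a.s., and $\sum_k\alpha_k=\infty$: on any sample path with $v_\infty>0$ one has $v_k\ge v_\infty/2$ for all large $k$, forcing $\sum_k\alpha_kv_k\ge\tfrac{v_\infty}{2}\sum_{k\ge K}\alpha_k=\infty$, a contradiction; hence $v_\infty=0$ a.s., i.e.\ $v_k\to 0$ a.s. I expect the only delicate points to be the bookkeeping ones --- checking integrability of each $w_k$ so that the supermartingale convergence theorem applies, and being careful that the conditioning is with respect to the natural filtration $\sigma(v_0,\ldots,v_k)$. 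It is worth noting that this argument uses only $\EXP{v_0}<\infty$, $\sum_k\beta_k<\infty$ and $\sum_k\alpha_k=\infty$; the hypothesis $\beta_k/\alpha_k\to 0$ is not needed for almost sure convergence to zero (it becomes relevant for rate or mean-square statements), so a self-contained treatment may simply verify directly that $\{w_k\}$ is a nonnegative supermartingale, as above, rather than invoking Robbins--Siegmund.
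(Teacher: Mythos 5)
Your argument is correct, and it is worth noting that the paper does not actually prove this lemma at all: it is imported verbatim from Polyak's book (Lemma 10, p.~49 of \cite{Polyak87}), so your self-contained derivation is a genuine addition rather than a rederivation of the paper's argument. Your route is the standard Robbins--Siegmund reduction: absorbing the summable perturbation into the tail sums $r_k=\sum_{j\ge k}\beta_j$ makes $w_k=v_k+r_k$ a nonnegative supermartingale (the inductive check that each $\EXP{v_k}<\infty$, and hence each $w_k$ is integrable, is the right bookkeeping step), the supermartingale convergence theorem plus $r_k\to0$ gives $v_k\to v_\infty$ a.s., the telescoped expectation bound plus monotone convergence gives $\sum_k \alpha_k v_k<\infty$ a.s., and the divergence of $\sum_k\alpha_k$ (whose tails also diverge, since dropping finitely many terms is harmless) forces $v_\infty=0$. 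Your closing observation is also accurate and worth keeping: the hypothesis $\beta_k/\alpha_k\to0$ plays no role in this almost-sure conclusion once $\sum_k\beta_k<\infty$ and $\sum_k\alpha_k=\infty$ are assumed; in the paper it is used elsewhere (e.g.\ in verifying the conditions of the lemma and in the mean-convergence and rate arguments), so the lemma as stated simply carries stronger hypotheses than your proof needs.
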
 

Throughout, we denote the stochastic error of the gradient estimator by 
\begin{align}
\label{def:wk}w_k\triangleq \nabla F(x_k,\xi_k)-\nabla f(x_k), \an{\quad\hbox{for all }k\ge0.} 
\end{align}
Note that under Assumption \ref{assum:main}, from the definition of $w_k$ in \eqref{def:wk}, we obtain $\EXP{w_k\mid \sF_k}=0$ and $\EXP{\|w_k\|^2\mid \sF_k}\fy{\leq}  \nu^2$. \fy{The following result plays a key role in the convergence and rate analysis of the proposed schemes.}
\begin{lemma}[A recursive error bound]\label{lemma:main-ineq}
Consider the \eqref{eqn:LM-cyclic-reg-BFGS} method and suppose Assumptions \ref{assum:convex}, \ref{assum:main}, and \ref{assump:Hk} hold. Also, assume $\mu_k$ is a non-increasing sequence and let \begin{align}\label{mainLemmaCond}\g_k\mu_k^{2\alpha} \leq \frac{\lambda_{\min}}{\lambda^2(L+\mu_0)},\quad \hbox{for all }k\geq 0.
\end{align}Then, the following inequality holds \fy{for all $k\geq 0$}:
\begin{align}\label{ineq:cond-recursive-F-k}
\EXP{f_{k+1}(x_{k+1})\mid\sF_k}-f^* &\leq (1-\lambda_{\min}\mu_k\g_k)(f_k(x_k)-f^*)+\frac{\lambda_{\min}\mbox{dist}^2(x_0,X^*)}{2}\mu_k^2\g_k\notag \\ &+\frac{ (L+\mu_k)\lambda^2\nu^2}{2}\mu^{2\alpha}_k\g_k^2.
\end{align}
\end{lemma}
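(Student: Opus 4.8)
The strategy is to run the familiar descent analysis on the regularized surrogate $f_k$ rather than on $f$, and then trade the resulting decrease of $\|\nabla f_k(x_k)\|^2$ for a contraction on $f_k(x_k)-f^*$ via Property~\ref{proper:propsfk}(c). First I would exploit that $\{\mu_k\}$ is non-increasing: $\mu_{k+1}\le\mu_k$ implies $f_{k+1}(x)\le f_k(x)$ pointwise, so $\EXP{f_{k+1}(x_{k+1})\mid\sF_k}\le\EXP{f_k(x_{k+1})\mid\sF_k}$ and it suffices to bound the latter. Rewriting the update as $x_{k+1}=x_k-\gamma_kH_k\big(\nabla f_k(x_k)+w_k\big)$, where $w_k$ is as in \eqref{def:wk} and $\nabla F(x_k,\xi_k)+\mu_k(x_k-x_0)=\nabla f_k(x_k)+w_k$, and using the $(L+\mu_k)$-Lipschitz continuity of $\nabla f_k$ (Property~\ref{proper:propsfk}(b)), the descent inequality yields
\[
f_k(x_{k+1})\le f_k(x_k)+\nabla f_k(x_k)^T(x_{k+1}-x_k)+\tfrac{L+\mu_k}{2}\|x_{k+1}-x_k\|^2 .
\]

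Next I would take $\EXP{\cdot\mid\sF_k}$. Since $x_k$ and $H_k$ are $\sF_k$-measurable (Assumption~\ref{assump:Hk}(a)) and $\EXP{w_k\mid\sF_k}=0$, the linear term contributes $-\gamma_k\nabla f_k(x_k)^TH_k\nabla f_k(x_k)\le-\gamma_k\lambda_{\min}\|\nabla f_k(x_k)\|^2$ (from $H_k\succeq\lambda_{\min}\mathbf{I}$); in the quadratic term the $w_k$-linear cross term vanishes in expectation, and the bounds $H_k\preceq\lambda\mu_k^{\alpha}\mathbf{I}$ and $\EXP{\|w_k\|^2\mid\sF_k}\le\nu^2$ give $\EXP{\|x_{k+1}-x_k\|^2\mid\sF_k}\le\gamma_k^2\lambda^2\mu_k^{2\alpha}\big(\|\nabla f_k(x_k)\|^2+\nu^2\big)$. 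Collecting the $\|\nabla f_k(x_k)\|^2$ terms, its coefficient is $-\gamma_k\lambda_{\min}+\tfrac12(L+\mu_k)\lambda^2\mu_k^{2\alpha}\gamma_k^2$, and condition \eqref{mainLemmaCond} together with $\mu_k\le\mu_0$ forces $\tfrac{(L+\mu_k)\lambda^2\mu_k^{2\alpha}\gamma_k}{2\lambda_{\min}}\le\tfrac12$, so this coefficient is at most $-\tfrac{\gamma_k\lambda_{\min}}{2}$.

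Finally I would use the left inequality of Property~\ref{proper:propsfk}(c), $\|\nabla f_k(x_k)\|^2\ge 2\mu_k\big(f_k(x_k)-f_k(x^*_k)\big)$, to turn the negative term into $-\gamma_k\lambda_{\min}\mu_k\big(f_k(x_k)-f_k(x^*_k)\big)$, then subtract $f^*$ and split $f_k(x_k)-f_k(x^*_k)=\big(f_k(x_k)-f^*\big)-\big(f_k(x^*_k)-f^*\big)$; this produces the factor $(1-\lambda_{\min}\mu_k\gamma_k)$ in front of $f_k(x_k)-f^*$ plus the residual $\gamma_k\lambda_{\min}\mu_k\big(f_k(x^*_k)-f^*\big)$. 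Because $x^*_k$ minimizes $f_k$ over $\Real^n$, for any $x^*\in X^*$ we have $f_k(x^*_k)\le f_k(x^*)=f^*+\tfrac{\mu_k}{2}\|x^*-x_0\|^2$, so minimizing over $X^*$ gives $0\le f_k(x^*_k)-f^*\le\tfrac{\mu_k}{2}\mbox{dist}^2(x_0,X^*)$; inserting this bounds the residual by $\tfrac{\lambda_{\min}\mbox{dist}^2(x_0,X^*)}{2}\mu_k^2\gamma_k$, and with the leftover noise term $\tfrac{(L+\mu_k)\lambda^2\nu^2}{2}\mu_k^{2\alpha}\gamma_k^2$ this is exactly \eqref{ineq:cond-recursive-F-k}.

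The delicate part is the bookkeeping around the moving regularizer: reducing $f_{k+1}(x_{k+1})$ to $f_k(x_{k+1})$ via monotonicity of $\{\mu_k\}$ and, relatedly, controlling the surrogate bias $f_k(x^*_k)-f^*$ through the distance-to-$X^*$ term, since $x^*_k$ is \emph{not} in $X^*$ in general. One also has to check that \eqref{mainLemmaCond} is precisely the condition keeping the coefficient of $\|\nabla f_k(x_k)\|^2$ below $-\tfrac{\gamma_k\lambda_{\min}}{2}$; the remaining work — expanding the quadratic and applying the eigenvalue bounds on $H_k$ — is routine.
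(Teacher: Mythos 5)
Your proposal is correct and follows essentially the same route as the paper's proof: descent inequality for $f_k$, conditional expectation with the eigenvalue bounds on $H_k$ and the variance bound on $w_k$, condition \eqref{mainLemmaCond} to retain the factor $-\tfrac{\gamma_k\lambda_{\min}}{2}\|\nabla f_k(x_k)\|^2$, Property~\ref{proper:propsfk}(c), and the bound $f_k(x^*_k)\le f_k(x^*)$ with minimization over $X^*$. The only differences (applying the monotonicity $f_{k+1}\le f_k$ at the start rather than near the end, and splitting $f_k(x_k)-f_k(x^*_k)$ as $(f_k(x_k)-f^*)-(f_k(x^*_k)-f^*)$ instead of lower-bounding it by $f_k(x_k)-f_k(x^*)$) are purely cosmetic rearrangements of the same computation.
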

\begin{proof}
The Lipschitzian property of $\nabla f_k$ and  
the update rule \eqref{eqn:LM-cyclic-reg-BFGS} imply that 
\begin{align*}
f_k(x_{k+1}) &\leq f_k(x_k)+\nabla f_k(x_k)^T(x_{k+1}-x_k)+\frac{ (L+\mu_k)}{2}\|x_{k+1}-x_k\|^2 \cr
&= f_k(x_k)-\g_k\nabla f_k(x_k)^TH_k\left(\nabla F(x_k,\xi_k) +\mu_k(x_k-x_0)\right)\\&+ \frac{ (L+\mu_k)}{2}\g_k^2\|H_k\left(\nabla F(x_k,\xi_k)+\mu_k(x_k-x_0)\right)\|^2.\end{align*}
\fy{Invoking the definition of the stochastic error $w_k$ in} \eqref{def:wk} \fy{and} Definition~\ref{def:regularizedF}, we obtain
\begin{align}\label{ineq:term1-2}
f_k(x_{k+1}) &\leq f_k(x_k)-\g_k\nabla f_k(x_k)^TH_k(\nabla f(x_k)+w_k +\mu_k(x_k-x_0))\notag\\&+ \frac{ (L+\mu_k)}{2}\g_k^2\|H_k(\nabla f(x_k)+w_k+\mu_k(x_k-x_0))\|^2\\ 
\notag = f_k(x_k)&-\g_k\underbrace{\nabla f_k(x_k)^TH_k(\nabla f_k(x_k)+w_k)}_{\tiny\hbox{Term } 1}+ \frac{ (L+\mu_k)}{2}\g_k^2\underbrace{\|H_k(\nabla f_k(x_k)+w_k)\|^2}_{\tiny\hbox{ Term } 2},\end{align}
where in the last \fy{equation,} we used the definition of $f_k$. Next, we estimate the conditional expectation of Term 1 and 2. From Assumption \ref{assump:Hk}, we have 
\begin{align*}
\hbox{Term }1 &= \nabla f_k(x_k)^TH_k\nabla f_k(x_k)+\nabla f_k(x_k)^TH_kw_k \\&\geq \lambda_{\min}\|\nabla f_k(x_k)\|^2+\nabla f_k(x_k)^TH_kw_k.
\end{align*}
Taking expectations conditioned on $\sF_k$, from the preceding inequality, we obtain
\begin{align}\label{equ:Term1}
\EXP{\hbox{Term } 1\mid\sF_k} &\geq \lambda_{\min}\|\nabla f_k(x_k)\|^2+\EXP{\nabla f_k(x_k)^TH_kw_k\mid\sF_k}\\ \notag&=\lambda_{\min}\|\nabla f_k(x_k)\|^2+\nabla f_k(x_k)^TH_k\EXP{w_k\mid\sF_k} =\lambda_{\min}\|\nabla f_k(x_k)\|^2,\end{align}
where we recall that $\EXP{w_k\mid\sF_k}=0$ and $\EXP{H_k\mid\sF_k}=H_k$. Similarly, in Term 2, invoking Assumption \ref{assump:Hk}(b), we may write
\begin{align*}
\hbox{Term } 2&= (\nabla f_k(x_k)+w_k)^TH_k^2(\nabla f_k(x_k)+w_k)\leq (\lambda\mu^{\alpha}_k)^2\|\nabla f_k(x_k)+w_k\|^2 \\ &=\lambda^2\mu^{2\alpha}_k\left(\|\nabla f_k(x_k)\|^2+\|w_k\|^2+2\nabla f_k(x_k)^Tw_k\right).\end{align*}
Taking conditional expectations from the preceding inequality, and using Assumption \ref{assum:main}, we obtain
\begin{align}\label{equ:Term2}
\EXP{\hbox{Term } 2\mid\sF_k}&\leq\lambda^2\mu^{2\alpha}_k\left(\|\nabla f_k(x_k)\|^2+\EXP{\|w_k\|^2\mid\sF_k}+2\nabla f_k(x_k)^T\EXP{w_k\mid\sF_k}\right)\notag\\ & \leq \lambda^2\mu^{2\alpha}_k\left(\|\nabla f_k(x_k)\|^2+\nu^2\right).\end{align}
Next, taking conditional expectations in \eqref{ineq:term1-2}, and using \eqref{equ:Term1} and \eqref{equ:Term2}, we obtain
\begin{align*}
&\EXP{f_k(x_{k+1})\mid\sF_k} \leq f_k(x_k)-\g_k\lambda_{\min}\|\nabla f_k(x_k)\|^2\notag \\
&+\lambda^2\mu^{2\alpha}_k\frac{ (L+\mu_k)}{2}\g_k^2\left(\|\nabla f_k(x_k)\|^2+\nu^2\right)\notag \\
&\leq f_k(x_k)-\frac{\g_k\lambda_{\min}}{2}\|\nabla f_k(x_k)\|^2\left(2-\frac{\lambda^2\mu_k^{2\alpha}\g_k(L+\mu_k)}{\lambda_{\min}}\right)+\lambda^2\mu^{2\alpha}_k\frac{ (L+\mu_k)}{2}\g_k^2\nu^2.
\end{align*}
From the assumption that $\g_k$ and $\mu_k$ satisfy $\g_k\mu^{2\alpha}_k \leq \frac{\lambda_{\min}}{\lambda^2(L+\mu_0)}$ for any $k \geq 0$ and that $\mu_k$ is non-increasing, we have $\g_k\mu^{2\alpha}_k \leq \frac{\lambda_{\min}}{\lambda^2(L+\mu_k)}$. As a consequence, we get $2-\frac{\lambda^2\mu_k^{2\alpha}\g_k(L+\mu_k)}{\lambda_{\min}}\geq 1$.  Therefore, from \fy{the} preceding inequality, we obtain
\begin{align*}
\EXP{f_k(x_{k+1}) \mid\sF_k}&\leq f_k(x_k)-\frac{\g_k\lambda_{\min}}{2}\|\nabla f_k(x_k)\|^2+\lambda^2\mu^{2\alpha}_k\frac{ (L+\mu_k)}{2}\g_k^2\nu^2.
\end{align*}
Employing Property \ref{proper:propsfk}(c), we have
\begin{align*}
\EXP{f_k(x_{k+1})\mid\sF_k} &\leq f_k(x_k)-\lambda_{\min}\mu_k\g_k(f_k(x_k)-f_k(x^*_k))+\lambda^2\mu^{2\alpha}_k\frac{ (L+\mu_k)}{2}\g_k^2\nu^2.
\end{align*}
Note that, since $\mu_k$ is a non-increasing sequence, Definition \ref{def:regularizedF} implies that \[\EXP{f_{k+1}(x_{k+1})\mid \sF_k} \leq\EXP{f_{k}(x_{k+1})\mid \sF_k}.\] Therefore, we obtain 
\begin{align}\label{ineq:lemmaLastIneq}
\EXP{f_{k+1}(x_{k+1})\mid\sF_k} &\leq f_k(x_k)-\lambda_{\min}\mu_k\g_k(\underbrace{f_k(x_k)-f_k(x^*_k)}_{\tiny{\hbox{Term } 3}})+\lambda^2\mu^{2\alpha}_k\frac{ (L+\mu_k)}{2}\g_k^2\nu^2.
\end{align}
Next, we derive a lower bound for Term 3. Since $x_k^*$ is the unique minimizer of $f_k$, we have $f_k(x_k^*) \leq f_k(x^*)$. Therefore, invoking Definition \ref{def:regularizedF}, for an arbitrary optimal solution $x^* \in X^*$, we have 
\[f_k(x_k)-f_k(x^*_k)\geq f_k(x_k)-f_k(x^*)=f_k(x_k)-f^*-\frac{\mu_k}{2}\|x^*-x_0\|^2.\]
From the preceding relation and \eqref{ineq:lemmaLastIneq}, we have
\begin{align*}
\EXP{f_{k+1}(x_{k+1})\mid\sF_k} &\leq f_k(x_k)-\lambda_{\min}\mu_k\g_k(f_k(x_k)-f^*)+\frac{\lambda_{\min}\|x^*-x_0\|^2}{2}\mu_k^2\g_k\\
&+\frac{(L+\mu_k)\lambda^2\nu^2}{2}\mu^{2\alpha}_k\g_k^2.
\end{align*}
Since $x^*$ is an arbitrary optimal solution, taking minimum from the right-hand side of the preceding inequality over $X^*$, we can replace $\|x^*-x_0\|$ by $\mbox{dist}(x_0,X^*)$. Then, subtracting $f^*$ from both sides of the resulting relation yields the desired inequality.
\end{proof}
Next, we show the convergence of the scheme. {In order to apply Lemma \ref{lemma:probabilistic_bound_polyak} to inequality \eqref{ineq:cond-recursive-F-k} and prove the almost sure convergence, we use the following definitions: 
\begin{align}\label{def:lemma3}
& v_k := f_k(x_k)-f^*, \quad \alpha_k := \lambda_{\min}\g_k\mu_k, \notag\\ &\beta_k := \frac{\lambda_{\min}\mbox{dist}^2(x_0,X^*)}{2}\mu_k^2\g_k+\frac{(L+\mu_k)\lambda^2\nu^2}{2}\mu^{2\alpha}_k\g_k^2.
\end{align}
\an{To satisfy the conditions of Lemma \ref{lemma:probabilistic_bound_polyak}, we 
identify a set of sufficient conditions on $\{\gamma_k\}$ and $\{\mu_k\}$ in the following assumption.}
Later in the subsequent sections, for each class of algorithms, we provide a set of sequences that meet these assumptions.}
\begin{assumption}[\fy{Conditions} on sequences for a.s. convergence]\label{assum:sequences}
Let the sequences $\{\gamma_k\}$ and $\{\mu_k\}$ be positive and satisfy the following conditions:
\begin{flalign*}
\begin{array}{lll}
\vspace{.2in}(a)\  \fy{\lim_{k \to \infty }\g_k\mu_k^{2\alpha-1} =0;}& &(b)\ \{\mu_k\}\hbox{ is non-increasing and }\mu_k \to 0;\\
\vspace{.2in}(c)\  \lambda_{\min}\g_k\mu_k \leq 1\hbox{ for }k \geq 0; && (d)\ \sum_{k=0}^\infty \g_k\mu_k =\infty;\\
(e)\ \sum_{k=0}^\infty \mu_k^2\g_k <\infty;&&(f)\  \sum_{k=0}^\infty \g_k^2\mu_k^{2\alpha} <\infty\fy{.}
\end{array}
\end{flalign*}
\end{assumption}

\begin{proposition}[Almost sure convergence]\label{prop:a.s}
Consider the \eqref{eqn:LM-cyclic-reg-BFGS} scheme. Suppose Assumptions~\ref{assum:convex},
\ref{assum:main}, \ref{assump:Hk}, and~\ref{assum:sequences} hold. Then,
 $\lim_{k \to \infty }f(x_k)= f^*$ almost surely.
\end{proposition}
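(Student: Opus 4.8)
\ The plan is to cast the recursive bound of Lemma~\ref{lemma:main-ineq} into the form required by the deterministic‑sequence lemma, Lemma~\ref{lemma:probabilistic_bound_polyak}, using exactly the substitutions in~\eqref{def:lemma3}: $v_k := f_k(x_k)-f^*$, $\alpha_k := \lambda_{\min}\g_k\mu_k$, and $\beta_k := \tfrac{\lambda_{\min}\mbox{dist}^2(x_0,X^*)}{2}\mu_k^2\g_k+\tfrac{(L+\mu_k)\lambda^2\nu^2}{2}\mu_k^{2\alpha}\g_k^2$. Once the hypotheses of Lemma~\ref{lemma:probabilistic_bound_polyak} are checked for these sequences, we conclude $v_k\to 0$ a.s., and then recover $f(x_k)\to f^*$ by a squeeze argument, since $0\le f(x_k)-f^*\le v_k$.

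First I would confirm that Lemma~\ref{lemma:main-ineq} is applicable. Besides $\{\mu_k\}$ being non‑increasing (Assumption~\ref{assum:sequences}(b)), the lemma requires the stepsize bound~\eqref{mainLemmaCond}. Writing $\g_k\mu_k^{2\alpha}=\mu_k\cdot(\g_k\mu_k^{2\alpha-1})$ and using $\mu_k\to 0$ together with $\g_k\mu_k^{2\alpha-1}\to 0$ (Assumption~\ref{assum:sequences}(a)), we get $\g_k\mu_k^{2\alpha}\to 0$, so~\eqref{mainLemmaCond} holds for all $k$ past some index $K$; discarding finitely many terms does not affect an almost‑sure limit, so it is harmless to run the argument from $k\ge K$ on. With this, inequality~\eqref{ineq:cond-recursive-F-k} reads precisely $\EXP{v_{k+1}\mid\sF_k}\le(1-\alpha_k)v_k+\beta_k$ a.s. Since $x_k$ is $\sF_k$‑measurable and $\mu_k$ is deterministic, $v_0,\ldots,v_k$ are $\sF_k$‑measurable, so by the tower property $\EXP{v_{k+1}\mid v_0,\ldots,v_k}=\EXP{\EXP{v_{k+1}\mid\sF_k}\mid v_0,\ldots,v_k}\le(1-\alpha_k)v_k+\beta_k$, which is the recursion needed in Lemma~\ref{lemma:probabilistic_bound_polyak}.

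It then remains to verify the remaining conditions. Nonnegativity: $v_k=f(x_k)-f^*+\tfrac{\mu_k}{2}\|x_k-x_0\|^2\ge f(x_k)-f^*\ge 0$ because $\mu_k>0$, and $\EXP{v_0}<\infty$ (indeed $v_0=f(x_0)-f^*$). Also $0\le\alpha_k=\lambda_{\min}\g_k\mu_k\le 1$ by Assumption~\ref{assum:sequences}(c), and $\beta_k\ge 0$ trivially. Divergence: $\sum_k\alpha_k=\lambda_{\min}\sum_k\g_k\mu_k=\infty$ by Assumption~\ref{assum:sequences}(d). Summability: the first term of $\beta_k$ is summable by Assumption~\ref{assum:sequences}(e), while for the second, $L+\mu_k\le L+\mu_0$ (as $\{\mu_k\}$ is non‑increasing) and $\sum_k\g_k^2\mu_k^{2\alpha}<\infty$ by Assumption~\ref{assum:sequences}(f), so $\sum_k\beta_k<\infty$. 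Finally, $\tfrac{\beta_k}{\alpha_k}=\tfrac{\mbox{dist}^2(x_0,X^*)}{2}\mu_k+\tfrac{(L+\mu_k)\lambda^2\nu^2}{2\lambda_{\min}}\g_k\mu_k^{2\alpha-1}\to 0$, the first summand vanishing since $\mu_k\to 0$ and the second since $L+\mu_k$ is bounded and $\g_k\mu_k^{2\alpha-1}\to 0$.

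Applying Lemma~\ref{lemma:probabilistic_bound_polyak} then yields $v_k=f_k(x_k)-f^*\to 0$ a.s., and since $0\le f(x_k)-f^*\le f_k(x_k)-f^*=v_k$, the squeeze theorem gives $f(x_k)\to f^*$ a.s. I do not anticipate a genuine obstacle: the substance is already contained in Lemma~\ref{lemma:main-ineq} and in the design of Assumption~\ref{assum:sequences}, and the proof is essentially the bookkeeping that matches conditions (a)--(f) to the four requirements of Lemma~\ref{lemma:probabilistic_bound_polyak}; the only points demanding a little care are the measurability remark that permits replacing conditioning on $\sF_k$ by conditioning on $v_0,\ldots,v_k$, and the preliminary check that~\eqref{mainLemmaCond} holds eventually.
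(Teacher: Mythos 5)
Your proposal is correct and follows essentially the same route as the paper: verify that condition~\eqref{mainLemmaCond} holds for all sufficiently large $k$ via Assumption~\ref{assum:sequences}(a,b), apply Lemma~\ref{lemma:probabilistic_bound_polyak} with the substitutions~\eqref{def:lemma3} by checking conditions (c)--(f), and conclude $f_k(x_k)-f^*\to 0$ a.s., from which $f(x_k)\to f^*$ follows since $0\le f(x_k)-f^*\le f_k(x_k)-f^*$. The only cosmetic differences are your explicit tower-property remark and the direct squeeze argument in place of the paper's splitting of $v_k$ into its two nonnegative parts.
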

\begin{proof} First, \fy{note that from Assumption \ref{assum:sequences}(a,b), we have $\lim_{k \to \infty }\g_k\mu_k^{2\alpha} =0$}. Thus, there exists $K\geq 1$ such that for any $k\geq K$, we have $\g_k\mu_k^{2\alpha} \leq \frac{\lambda_{\min}}{\lambda^2(L+\mu_0)}$
implying that \an{condition~\eqref{mainLemmaCond} of Lemma~\ref{lemma:main-ineq} holds for all $k\geq K$.
Hence, relation~\eqref{ineq:cond-recursive-F-k} holds for any $k\geq K$.}
Next, we apply Lemma \ref{lemma:probabilistic_bound_polyak} to prove a.s.\ convergence of 
the \eqref{eqn:LM-cyclic-reg-BFGS} scheme. 
{Consider the definitions in~\eqref{def:lemma3} for any $k \geq K$.}
The non-negativity of $\alpha_k$ and $\beta_k$ is implied by the definition and that $\lambda_{\min}$, $\g_k$ and $\mu_k$ are positive.
From  \eqref{ineq:cond-recursive-F-k}, we have 
\begin{align*}
& \EXP{v_{k+1}\mid \sF_k} \leq (1-\alpha_k)v_k+\beta_k
\quad  \hbox{for all }k\geq K.
\end{align*}
Since $f^* \leq f(x)$ for any $x \in \Real^n$, we can write $v_k= (f(x_k)-f^*)+\frac{\mu_k}{2}\|x_k-x_0\|^2  \geq 0.$
From Assumption \ref{assum:sequences}(c), we obtain $\alpha_k \leq 1$. Also, from Assumption \ref{assum:sequences}(d), we get $\sum_{k=K}^\infty \alpha_k =\infty$. 
\an{Using Assumption~\ref{assum:sequences}(b,e,f) and the definition of $\beta_k$ in~\eqref{def:lemma3}, for an arbitrary solution $x^*$, we \vus{may prove the summability of $\beta_k$ as follows.}} 
\begin{align*}&\sum_{k=K}^\infty \beta_k\leq  \frac{\lambda_{\min}\mbox{dist}^2(x_0,X^*)}{2}\sum_{k=K}^\infty \mu_k^2\g_k+\frac{(L+\mu_0)\lambda^2\nu^2}{2}\sum_{k=K}^\infty\mu^{2\alpha}_k\g_k^2 <\infty.\end{align*}
Similarly, we can write
\begin{align*} \lim_{k\to \infty}\frac{\beta_k}{\alpha_k}& \leq \frac{\mbox{dist}^2(x_0,X^*)}{2}\lim_{k\to \infty}{\mu_k}+ \frac{(L+\mu_0)\lambda^2\nu^2}{2}\lim_{k\to \infty}\mu^{2\alpha-1}_k\g_k =0, \end{align*}
where the last equation is implied \an{by Assumption \ref{assum:sequences}(a,b).}
Therefore, all conditions of Lemma \ref{lemma:probabilistic_bound_polyak} hold \fy{(with an index shift)} and we conclude that 
\an{$v_k:=f_k(x_k)-f^*$ converges to 0 a.s.} 
Let us define $v'_k:= f(x_k)-f^*$ and $v''_k := \frac{\mu_k}{2}\|x_k-x_0\|^2$, \an{ 
so that $v_k=v'_k+v''_k$. Since $v'_k$ and $v''_k$ are nonnegative, and  $v_k \to 0$ a.s., 
 it follows that $v'_k \to 0$ and $v''_k \to 0$ a.s., implying that
$\lim_{k \to \infty }f(x_k)= f^*$ a.s.}
\end{proof}

In the following, our goal is to state the assumptions on the sequences $\{\g_k\}$ and $\{\mu_k\}$ under which we can show the convergence in mean.
\begin{assumption}[\fy{Conditions} on sequences for convergence in mean]\label{assum:sequences-ms-convergence}
Let the sequences $\{\gamma_k\}$ and $\{\mu_k\}$ be positive and satisfy the following conditions:
\begin{itemize}
\item [(a)] $\lim_{k \to \infty }\g_k\mu_k^{2\alpha-1} =0$;
\item [(b)] $\{\mu_k\}\hbox{ is non-increasing and }\mu_k \to 0$;
\item [(c)] $\lambda_{\min}\g_k\mu_{k} \leq 1$ for $k \geq 0$;
\item [(d)] There exist $K_0$ and $0<\beta<1$ such that \begin{align*}\g_{k-1}\mu_{k-1}^{2\alpha-1}\leq \g_{k}\mu_{k}^{2\alpha-1}(1+\beta \lambda_{\min}\g_{k}\mu_{k}),\quad \hbox{for all }k \geq K_0;\end{align*}
\item [(e)] There exists a scalar $\rho>0$ such that $ \mu_k^{2-2\alpha} \leq \rho\g_k$ for all $k \geq 0$\fy{.}
\end{itemize}
\end{assumption}
Next, we use Assumption \ref{assum:sequences-ms-convergence} to establish the convergence in mean.
\begin{proposition}[Convergence in mean]\label{prop:mean}
Consider the \eqref{eqn:LM-cyclic-reg-BFGS} scheme. 
Suppose Assumptions~\ref{assum:convex},
\ref{assum:main}, \ref{assump:Hk}, and \ref{assum:sequences-ms-convergence} hold. 
Then, there exists $K\geq 1$ such that:\begin{align}\label{ineq:bound}
 \EXP{f(x_{k+1})}-f^*\leq \theta\g_k\mu_k^{2\alpha-1}, \quad \hbox{for all } k \geq K,
 \end{align}
  where $f^*$ is the optimal value of problem \fy{and}
\begin{align}\label{def:theta}\theta := \max \bigg\{\frac{\EXP{f_{K+1}(x_{K+1})}-f^*}{\g_K\mu_K^{2\alpha-1}},\frac{\rho\lambda_{\min}\mbox{dist}^2(x_0,X^*)+ (L+\mu_0)\lambda^2\nu^2}{2\fy{\lambda_{\min}}(1-\beta)}\bigg\}.\end{align}
 Moreover, $\lim_{k\to \infty}\EXP{f(x_{k})}=f^*.$
\end{proposition}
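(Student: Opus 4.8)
The plan is to pass to full expectations in the recursive bound of Lemma~\ref{lemma:main-ineq} and then propagate, by induction, the decay rate $\g_k\mu_k^{2\alpha-1}$ in which the claimed bound~\eqref{ineq:bound} is stated. First I would pick $K\ge1$ large enough that, for all $k\ge K$, both the stepsize requirement~\eqref{mainLemmaCond} of Lemma~\ref{lemma:main-ineq} and the index condition $k\ge K_0$ of Assumption~\ref{assum:sequences-ms-convergence}(d) hold; such a $K$ exists because Assumption~\ref{assum:sequences-ms-convergence}(a,b) forces $\g_k\mu_k^{2\alpha}=(\g_k\mu_k^{2\alpha-1})\mu_k\to0$. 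For $k\ge K$, all hypotheses of Lemma~\ref{lemma:main-ineq} are in force (recall $\mu_k$ is non-increasing), so~\eqref{ineq:cond-recursive-F-k} is available; taking expectations and writing $u_k:=\EXP{f_k(x_k)}-f^*$, which is nonnegative since $f\le f_k$ everywhere and $f\ge f^*$, gives for all $k\ge K$
\[
u_{k+1}\le(1-\lambda_{\min}\g_k\mu_k)\,u_k+\tfrac{\lambda_{\min}\mbox{dist}^2(x_0,X^*)}{2}\,\mu_k^2\g_k+\tfrac{(L+\mu_k)\lambda^2\nu^2}{2}\,\mu_k^{2\alpha}\g_k^2.
\]

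I would then show by induction on $k\ge K$ that $u_{k+1}\le\theta\g_k\mu_k^{2\alpha-1}$ with $\theta$ as in~\eqref{def:theta}. The base case $k=K$ follows from the first entry of the maximum in~\eqref{def:theta}. For the inductive step, assume $u_k\le\theta\g_{k-1}\mu_{k-1}^{2\alpha-1}$. Since $1-\lambda_{\min}\g_k\mu_k\ge0$ by Assumption~\ref{assum:sequences-ms-convergence}(c), substituting this into the recursion above and then invoking Assumption~\ref{assum:sequences-ms-convergence}(d), namely $\g_{k-1}\mu_{k-1}^{2\alpha-1}\le\g_k\mu_k^{2\alpha-1}(1+\beta\lambda_{\min}\g_k\mu_k)$, together with the elementary bound $(1-a)(1+\beta a)\le1-(1-\beta)a$ for $a\ge0$ applied with $a=\lambda_{\min}\g_k\mu_k$, leads to
\[
u_{k+1}\le\big(1-(1-\beta)\lambda_{\min}\g_k\mu_k\big)\theta\g_k\mu_k^{2\alpha-1}+\tfrac{\lambda_{\min}\mbox{dist}^2(x_0,X^*)}{2}\mu_k^2\g_k+\tfrac{(L+\mu_k)\lambda^2\nu^2}{2}\mu_k^{2\alpha}\g_k^2.
\]
After dividing by $\g_k^2\mu_k^{2\alpha}>0$, the sought inequality $u_{k+1}\le\theta\g_k\mu_k^{2\alpha-1}$ becomes $\tfrac{\lambda_{\min}\mbox{dist}^2(x_0,X^*)}{2}\cdot\tfrac{\mu_k^{2-2\alpha}}{\g_k}+\tfrac{(L+\mu_k)\lambda^2\nu^2}{2}\le(1-\beta)\lambda_{\min}\theta$, which holds because $\mu_k^{2-2\alpha}/\g_k\le\rho$ by Assumption~\ref{assum:sequences-ms-convergence}(e), $\mu_k\le\mu_0$, and $\theta$ dominates the second entry of the maximum in~\eqref{def:theta}. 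This closes the induction.

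Since $f\le f_{k+1}$ gives $\EXP{f(x_{k+1})}-f^*\le u_{k+1}$, the induction yields~\eqref{ineq:bound}. The limit statement then follows by squeezing: $0\le\EXP{f(x_k)}-f^*\le\theta\g_{k-1}\mu_{k-1}^{2\alpha-1}$, and the right-hand side converges to $0$ by Assumption~\ref{assum:sequences-ms-convergence}(a), so $\lim_{k\to\infty}\EXP{f(x_k)}=f^*$. I expect the one genuinely delicate part to be the bookkeeping inside the inductive step: one must guess the correct ansatz $\theta\g_k\mu_k^{2\alpha-1}$ and recognize that conditions (d) and (e) of Assumption~\ref{assum:sequences-ms-convergence} are exactly what is needed so that the one-step factor $1-(1-\beta)\lambda_{\min}\g_k\mu_k$ absorbs both the regularization error of order $\mu_k^2\g_k$ and the sampling error of order $\mu_k^{2\alpha}\g_k^2$, with the two entries of the maximum in~\eqref{def:theta} handling, respectively, the base case and the inductive step.
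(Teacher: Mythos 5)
Your proposal is correct and follows essentially the same route as the paper: take expectations in the recursive bound of Lemma~\ref{lemma:main-ineq}, induct with the ansatz $\theta\g_k\mu_k^{2\alpha-1}$ using Assumption~\ref{assum:sequences-ms-convergence}(d) and the bound $(1-a)(1+\beta a)\le 1-(1-\beta)a$, with the two entries of the maximum in \eqref{def:theta} covering the base case and the inductive step, and the limit following from Assumption~\ref{assum:sequences-ms-convergence}(a). The only cosmetic difference is that you invoke Assumption~\ref{assum:sequences-ms-convergence}(e) and the bound $\mu_k\le\mu_0$ at the end of the inductive step (after dividing by $\g_k^2\mu_k^{2\alpha}$) whereas the paper applies them up front before the induction; the arguments are otherwise identical.
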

\begin{proof}
Note that Assumption \ref{assum:sequences-ms-convergence}(a,b) imply that \eqref{ineq:cond-recursive-F-k} holds for a large enough $k$, say after $\hat K$. Then, since the conditions of Lemma \ref{lemma:main-ineq} are met \fy{(with an index shift)}, taking expectations on both sides of \eqref{ineq:cond-recursive-F-k}, we obtain for any $k\geq \hat K$:
\begin{align*}
\EXP{f_{k+1}(x_{k+1})-f^*} &\leq (1-\lambda_{\min}\mu_k\g_k)\EXP{f_k(x_k)-f^*}+\frac{\lambda_{\min}\mbox{dist}^2(x_0,X^*)}{2}\mu_k^2\g_k\\ &+\frac{ (L+\mu_0)\lambda^2\nu^2}{2}\mu^{2\alpha}_k\g_k^2.
\end{align*}
 Using Assumption \ref{assum:sequences-ms-convergence}(e), we have $\mu_k^2\g_k\leq \rho \g_k^2\mu_k^{2\alpha}$. Thus, we obtain
\begin{align}\label{ineq:cond-recursive-F-k-expected2}
\EXP{f_{k+1}(x_{k+1})-f^*} &\leq (1-\lambda_{\min}\mu_k\g_k)\EXP{f_k(x_k)-f^*}\notag\\ &+\left(\frac{\rho\lambda_{\min}\mbox{dist}^2(x_0,X^*)+ (L+\mu_0)\lambda^2\nu^2}{2}\right)\mu^{2\alpha}_k\g_k^2.
\end{align}
Let us define \fy{$K\triangleq\max\{\hat K, K_0\}$, where $K_0$ is from Assumption \ref{assum:sequences-ms-convergence}(d)}. Using the preceding relation and by induction on $k$, we show the desired result. To show \eqref{ineq:bound}, we show the following relation first: 
\begin{align}\label{ineq:bound_v2}
 \EXP{f_{k+1}(x_{k+1})}-f^*\leq \theta\g_k\mu_k^{2\alpha-1}, \quad \hbox{for all } k \geq K,
 \end{align}
 Note that \eqref{ineq:bound_v2} \fy{implies} the relation \eqref{ineq:bound} since we have $ \EXP{f(x_{k+1})}\leq  \EXP{f_{k+1}(x_{k+1})}$.
First, we show that \eqref{ineq:bound_v2} holds for \fy{$k=K$}.  Consider the term $\EXP{f_{K+1}(x_{K+1})}-f^*$. Multiplying and dividing by $\g_K\mu_K^{2\alpha-1}$, we obtain
\begin{align*}\EXP{f_{K+1}(x_{K+1})}-f^*=\left(\frac{\EXP{f_{K+1}(x_{K+1})}-f^*}{\g_k\mu_K^{2\alpha-1}}\right)\g_K\mu_K^{2\alpha-1}
 \leq \theta \g_K\mu_K^{2\alpha-1},\end{align*}
where the last inequality is obtained by invoking the definition of $\theta$ in \eqref{def:theta}. This implies that \eqref{ineq:bound_v2} holds for $k=K$. Now assume that  \eqref{ineq:bound_v2} holds for some $k \geq K$. We show that it also holds for $k+1$.  From the induction hypothesis and \eqref{ineq:cond-recursive-F-k-expected2} we have
\begin{align*}
\EXP{f_{k+1}(x_{k+1})-f^*} &\leq (1-\lambda_{\min}\mu_k\g_k)\theta\g_{k-1}\mu_{k-1}^{2\alpha-1}\\ &+\left(\frac{\rho\lambda_{\min}\mbox{dist}^2(x_0,X^*)+ (L+\mu_0)\lambda^2\nu^2}{2}\right)\mu^{2\alpha}_k\g_k^2.
\end{align*}
Using Assumption \ref{assum:sequences-ms-convergence}(d) we obtain
\begin{align}\label{ineq:thm2-rel1}
\EXP{f_{k+1}(x_{k+1})-f^*} &\leq \theta\g_{k}\mu_{k}^{2\alpha-1}(1-\lambda_{\min}\mu_k\g_k)(1+\beta \lambda_{\min}\g_{k}\mu_{k})\notag \\ &+\left(\frac{\rho\lambda_{\min}\mbox{dist}^2(x_0,X^*)+ (L+\mu_0)\lambda^2\nu^2}{2}\right)\mu^{2\alpha}_k\g_k^2.
\end{align}
Next we find an upper bound for the term $(1-\lambda_{\min}\mu_k\g_k)(1+\beta \lambda_{\min}\g_{k}\mu_{k}) $ as follows
\begin{align*}
(1-\lambda_{\min}\mu_k\g_k)(1+\beta \lambda_{\min}\g_{k}\mu_{k}) &= 1-\lambda_{\min}\mu_k\g_k+\beta\lambda_{\min}\mu_k\g_k-\fy{\beta\lambda_{\min}^2\mu_k^2\g_k^2}\\ & \leq 1-(1-\beta)\lambda_{\min}\mu_k\g_k.
\end{align*}
Combining this relation with \eqref{ineq:thm2-rel1}, it follows \fy{that}
\begin{align*}
&\EXP{f_{k+1}(x_{k+1})-f^*} \leq \theta\g_{k}\mu_{k}^{2\alpha-1} -\theta\fy{\lambda_{\min}}(1-\beta)\mu_k^{2\alpha}\g_k^2  \\ &+\left(\frac{\rho\lambda_{\min}\mbox{dist}^2(x_0,X^*)+ (L+\mu_0)\lambda^2\nu^2}{2}\right)\mu^{2\alpha}_k\g_k^2\\ & =\theta\g_{k}\mu_{k}^{2\alpha-1}-\left(\underbrace{\theta\fy{\lambda_{min}}(1-\beta)-\frac{\rho\lambda_{\min}\mbox{dist}^2(x_0,X^*)+ (L+\mu_0)\lambda^2\nu^2}{2}}_{\tiny{\hbox{Term }1}}\right)\mu^{2\alpha}_k\g_k^2.
\end{align*}
Note that the definition of $\theta$ in \eqref{def:theta} implies that Term 1 is nonnegative. Therefore, 
\begin{align*}
\EXP{f_{k+1}(x_{k+1})-f^*} &\leq \theta\g_{k}\mu_{k}^{2\alpha-1}.
\end{align*}
Hence, the induction statement holds for $k+1$. We conclude that \eqref{ineq:bound_v2} holds for all $k\geq K$. As a consequence, \eqref{ineq:bound} holds for all $k\geq K$ as well. To complete the proof, we need to show $\lim_{k\to \infty}\EXP{f(x_{k})}=f^*$. This is an immediate result of \eqref{ineq:bound} and Assumption \ref{assum:sequences-ms-convergence}(a).
\end{proof}

\section{Iteratively regularized stochastic and deterministic LBFGS methods}\label{sec:BFGS}
\fy{In this section, our main goal is to develop an efficient update rule for matrix $H_k$ of} the \eqref{eqn:LM-cyclic-reg-BFGS} scheme \fy{and establish a convergence rate result}.  
\subsection{Background}\label{sec:LBFGS-background}
Stochastic gradient methods are known to be sensitive to the choice of
stepsizes. In our \vus{prior} work \cite{Farzad1,Farzad2,FarzadTAC19},
we address this challenge \vus{in part} by developing self-tuned
stepsizes \fy{under the strong convexity assumption}. Another avenue to
enhance the robustness of \vus{this} scheme \vus{lies in incorporating}
curvature information of the objective function. A well-known updating
rule for the matrix $H_k$ that uses the curvature estimates is \vus{the
BFGS update}. The deterministic BFGS \fy{scheme}, achieves a
superlinear convergence rate (cf. Theorem 8.6 \cite{Nocedal2006NO})
outperforming the deterministic gradient/subgradient method. In the
classical deterministic BFGS scheme, the curvature information is
incorporated within the algorithm using two terms: the first term is
the displacement factor $s_k=x_{k+1}-x_k$, while the other is the
change in the gradient mapping, $y_k=\nabla f(x_{k+1})-\nabla f(x_k)$,
where  $\nabla f$ denotes the gradient mapping of the deterministic
objective function \fy{$f$}. To have a well-defined update rule, it is
essential that at each iteration, the \fy{curvature condition}
\fy{$s_k^Ty_k>0$} is satisfied. By maintaining this condition at each
iteration, the positive definiteness of the \fy{approximate Hessian},
denoted by $B_k$, is preserved. The BFGS update rule in deterministic
regime also ensures that $B_k$ satisfies a secant equation given by
$B_{k+1}s_k=y_k$\fy{, which ensures} that the \fy{approximate Hessian}
maps $s_k$ into $y_k$.  

To address optimization problems in \fy{the} stochastic \fy{regime}, a regularized BFGS update rule, namely (RES), \fy{was} developed for problem \eqref{eqn:problem} under the strong convexity assumption~\cite{mokh14}. \fys{In problems with a large dimension (see \cite{nocedal15} for some examples), the implementation of this scheme becomes challenging}. This is because the computation \fy{of} $B_k$ and its inverse become \fy{expensive}. Moreover, at each iteration, a matrix of size $n\times n$ needs to be stored. To address these issues in \fy{large scale} optimization problems, limited-memory variants of stochastic BFGS scheme, denoted by stochastic LBFGS, have been developed \cite{Mokhtari15,nocedal15}. The key idea in LBFGS update rule is that instead of storing the full $n\times n$ matrix at each iteration, a fixed number of vectors of size $n$ are stored and used to update the \fy{approximate Hessian inverse}. 

\subsection{Outline of the stochastic LBFGS scheme}\label{sec:LBFGS-outline}
\fy{The} strong convexity property assumed in \cite{Mokhtari15,nocedal15} plays a key role in developing the LBFGS update rules and establishing the convergence. Note that in \fy{the} absence of strong convexity, the \fy{curvature condition} does not hold. To address this issue, a standard approach is to employ a damped variant of the BFGS update rule \cite{Nocedal2006NO}. A drawback of this class of update rules is that there is no guarantee on the rate statements under such rules. Here, we resolve this issue through employing the properties of the regularized gradient map. This is carried out in \eqref{eqn:LM-cyclic-reg-BFGS} by adding the regularization term $\mu_k(x_k-x_0)$ to the stochastic gradient mapping $\nabla F(x_k,\xi_k)$. \fys{To maintain the curvature condition}, \fys{we consider updating} the matrix $H_k$ and the parameter $\mu_k$ \fy{in alternate steps}. Keeping the regularization parameter constant in one iteration \fy{allows for maintaining the \fy{curvature condition}}. After \fy{updating} $H_k$, in the \fy{subsequent} iteration, we keep this matrix fixed and drop the value of the regularized parameter.  \fy{Accordingly, the update rule for} the regularization parameter $\mu_k$ \fy{is based on the} following general procedure:
\begin{align}\label{eqn:mu-k}
  \begin{cases}
   \mu_{k}\fy{:=}\mu_{k-1},      &   \text{if } k \text{ is odd},\\
    \mu_{k}<\mu_{k-1},  &    \text{otherwise}\fy{.}
  \end{cases}
\end{align}
Note that we allow for updating the stepsize sequence at each iteration.
We construct the update rule in terms of the following two factors defined for any odd \fy{$k\geq 1$:\begin{align}\label{equ:siyi-LBFGS}&s_{\lceil k/2\rceil}:= x_{k}-x_{k-1},\cr 
&y_{\lceil k/2\rceil}:=  \nabla F(x_{k},\xi_{k-1}) -\nabla F(x_{k-1},\xi_{k-1})  +  \fys{\tau}\mu_k^\delta s_{\lceil k/2\rceil},\end{align}}where \fys{$\tau >0$ and} $0<\delta\leq 1$ \fys{are parameters to control} the level of regularization in the matrix $H_k$. Here, $\delta$ only controls the regularization for matrix $H_k$, but not that of the gradient direction. It is assumed that $\delta>0$ to \vus{ensure} that the perturbation term \fy{$\mu_k^\delta s_{\lceil k/2\rceil} \to 0$, as $k\to\infty$}. The update policy for $H_k$ is \fy{defined} as follows: 
\begin{align}\label{eqn:H-k}H_{k}\fy{\triangleq}
  \begin{cases}
    H_{k,m},  &    \text{if } k \text{ is odd}, \\
    H_{k-1},  &   \text{otherwise},
  \end{cases}
\end{align}
where $m<n$ (in the large scale settings, \fys{$m\ll n$}) is \fy{the memory parameter and represents} the number of pairs \fy{$(s_i,y_i)$} to be \fys{stored} to estimate $H_k$. Matrix $H_{k,m}$, for any \fy{odd $k\geq 2m-1$}, is updated using the following recursive formula:
\begin{align}\label{eqn:H-k-m}
H_{k,j}:=\left(\mathbf{I}-\frac{y_is_i^T}{{y_i^Ts_i}}\right)^TH_{k,j-1}\left(\mathbf{I}-\frac{y_is_i^T}{y_i^Ts_i}\right)+\frac{s_is_i^T}{y_i^Ts_i}, \quad 1 \leq j\leq m,
\end{align}
where \fy{$i\triangleq \lceil k/2\rceil-(m-j)$ and we set $H_{k,0}:=\frac{s_{\lceil k/2\rceil}^Ty_{\lceil k/2\rceil}}{y_{\lceil k/2\rceil}^Ty_{\lceil k/2\rceil}}\mathbf{I}$}. Here, at odd iterations, matrix $H_k$ is obtained recursively from $H_{k,0},H_{k,1},\ldots, H_{k,m-1}$. Note that \fy{computation of} $H_k$ at an odd $k$ \fy{needs} $m$ pairs \fy{of} \fy{$(s_i,y_i)$}. More precisely, $H_k$ uses the following curvature information: $\left\{(s_i,y_i)\mid i=\lceil k/2\rceil-m+1, \lceil k/2\rceil-m+2,\ldots,\lceil k/2\rceil\right\}.$ 
For convenience, in the first \fys{$2m-2$} iterations, we \fy{let $H_k$ be the identity matrix}. \fy{This allows for collecting the first set of $m$ pairs $(s_i,y_i)$, where $i=1,2,\ldots,m$, that is used at iteration $k:=2m-1$ to obtain $H_{2m-1}$}. \fys{The main differences between update rule \eqref{eqn:H-k-m} and that of the standard SQN schemes~\cite{Mokhtari15,nocedal15} are as follows:} (i) The first distinction is with respect to the definition of $y_i$ in \eqref{equ:siyi-LBFGS}. Here the term $\fy{\mu_k^\delta s_{\lceil k/2\rceil}}$ \vus{compensates} for the lack of strong monotonicity of the gradient mapping and \vus{aids in} establishing the \fy{curvature condition}. (ii) Second, instead of obtaining the pair $(s_i,y_i)$ at every iteration, we \fy{evaluate these terms only at} odd iterations to allow for updating the regularization parameter satisfying \eqref{eqn:mu-k}. 

\fy{Implementation of this stochastic LBFGS scheme requires computing the term $H_k\nabla F_k(x_k,\xi_k)$ at the $k$th iteration.} This can be performed through a two-loop recursion with $\mathcal{O}\left(mn\right)$ number of operations (see Ch. 7, Pg. 178 in \cite{Nocedal2006NO}). \fy{This will be shown for Algorithm \ref{algorithm:IR-S-BFGS} in Theorem \ref{thm:rate}(b).}

In this section, we consider a stronger variant of Assumption \ref{assum:convex} stated as follows:
\begin{assumption}\label{assum:convex2}  
\begin{enumerate}
\item [(a)]  \an{The function $F(x,\xi)$ is convex over $\Real^n$ for any $\xi \in \Omega$.}  
\item[(b)] For any $\xi \in \Omega$, $F(\cdot,\xi)$ is continuously differentiable \us{with Lipschitz
	continuous gradients} over $\Real^n$ with parameter $L_\xi>0$. Moreover, $L:=\sup_{\xi \in \Omega}L_\xi < \infty$. 
	\item[(c)] \ The optimal solution set $X^*$ of problem \eqref{eqn:problem} is nonempty. 
\end{enumerate}
\end{assumption}
\fy{Next, in Lemma \ref{LBFGS-matrix}, we derive bounds on the eigenvalues of the matrix $H_k$ and show that at iterations where $H_k$ is updated, both the \fy{curvature condition} and the secant equation hold. In \fys{the} proof of Lemma \ref{LBFGS-matrix}, we will make use of the following result.
\begin{lemma}\label{sumProductBounds}
Let $0 < a_1 \leq a_2 \leq \ldots \leq a_n$, and $P$ and $S$ be positive scalars such that \fys{$\sum_{i=1}^n a_i \leq S$ and $\prod_{i=1}^na_i \geq P$. Then, we have 
$a_1 \geq (n-1)!P/S^{n-1}$.}
\end{lemma}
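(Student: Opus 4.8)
The plan is to bound each of the larger terms $a_2,\dots,a_n$ individually in terms of $S$, exploiting the ordering $a_1\le a_2\le\cdots\le a_n$, and then combine these bounds with the product constraint $\prod_{i=1}^n a_i\ge P$ to isolate $a_1$. In effect, the product of the $n-1$ largest entries cannot be too big when their sum is controlled by $S$, and this forces $a_1$ (which carries the rest of the product mass) to be bounded below.

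First I would fix an index $i$ with $2\le i\le n$ and observe that the $n-i+1$ terms $a_i,a_{i+1},\dots,a_n$ are each at least $a_i$. Since all the $a_j$ are positive and $\sum_{j=1}^n a_j\le S$, it follows that $(n-i+1)\,a_i\le \sum_{j=i}^n a_j\le S$, hence $a_i\le \frac{S}{\,n-i+1\,}$. Taking the product of these inequalities over $i=2,\dots,n$ gives
\[
\prod_{i=2}^n a_i \;\le\; \frac{S^{\,n-1}}{\ \prod_{i=2}^n (n-i+1)\ } \;=\; \frac{S^{\,n-1}}{(n-1)!},
\]
where the last equality holds because $\{\,n-i+1 : i=2,\dots,n\,\}=\{1,2,\dots,n-1\}$.

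Then I would invoke the hypothesis $P\le \prod_{i=1}^n a_i = a_1\prod_{i=2}^n a_i$ together with the displayed bound to conclude $P \le a_1\,S^{\,n-1}/(n-1)!$, and rearranging (using $S>0$) yields $a_1\ge (n-1)!\,P/S^{\,n-1}$, as claimed. I do not expect a genuine obstacle here; the only step requiring a moment's thought is the per-term estimate $a_i\le S/(n-i+1)$, which is precisely where the sortedness assumption enters, and one may note the statement is trivially true (and the bound reads $a_1\ge P$) in the degenerate case $n=1$.
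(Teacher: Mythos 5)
Your proposal is correct and follows essentially the same route as the paper's proof: the per-index estimate $(n-i+1)a_i \leq \sum_{j} a_j \leq S$ from sortedness, multiplied over $i=2,\ldots,n$ to bound $\prod_{i=2}^n a_i$ by $S^{n-1}/(n-1)!$, and then combined with $\prod_{i=1}^n a_i \geq P$ to isolate $a_1$. You merely spell out the product step that the paper leaves implicit, so there is nothing to add.
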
 
\begin{proof}
See Appendix \ref{app:sumProductBounds}.
\end{proof}}
\begin{lemma}[Properties of  update rule \eqref{eqn:H-k}-\eqref{eqn:H-k-m}]\label{LBFGS-matrix}
Consider the \eqref{eqn:LM-cyclic-reg-BFGS} method. Let $H_k$ be given by the update rule \eqref{eqn:H-k}-\eqref{eqn:H-k-m}, where $s_i$ and $y_i$ are defined in \eqref{equ:siyi-LBFGS} and $\mu_k$ is updated according to the procedure \eqref{eqn:mu-k}. Let \fy{Assumption \ref{assum:convex2}(a,b)} hold. Then, the following results hold:
\begin{itemize}
\item [(a)]  For any odd \fy{$k \geq 2m-1$}, the \fy{curvature condition} holds, i.e., \fy{$s_{\lceil k/2\rceil}^T{y_{\lceil k/2\rceil}} >0$}.
\item [(b)]  For any odd \fy{$k \geq 2m-1$}, the secant equation holds, i.e., \fy{$H_{k}{y}_{\lceil k/2\rceil}=s_{\lceil k/2\rceil}$}. 
\item [(c)] For any \fy{$k \geq 2m-1$}, $H_k$ satisfies Assumption \ref{assump:Hk} \fy{with} the following values:
\begin{align}\label{equ:valuesForAssumH_k}&\lambda_{\min}=\frac{1}{(m+n)\left(L+\fys{\tau}\mu_0^\delta\right)}, \quad \fy{\lambda= \frac{(m+n)^{n+m-1}\left(L+\fys{\tau}\mu_0^\delta\right)^{n+m-1}}{(n-1)!\fys{\tau^{(n+m)}}}}, \notag\\ & \fy{\hbox{and }\alpha=-\delta(n+m).}
\end{align}
More precisely, $H_k$ is symmetric, $\EXP{H_k\mid\sF_k}=H_k$ and
\begin{align}\label{proof:H_kbounds} \frac{1}{(m+n)\left(L+\fys{\tau}\mu_0^\delta\right)}\mathbf{I} \preceq H_{k} \preceq \frac{(m+n)^{n+m-1}\left(L+\fys{\tau}\mu_0^\delta\right)^{n+m-1}}{(n-1)!\fys{\left(\tau\mu_k^\delta\right)^{(n+m)}}}\ \mathbf{I}.
\end{align}
\end{itemize}
\end{lemma}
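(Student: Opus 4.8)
The plan is to treat the three parts jointly, carrying an induction over the odd iteration indices that simultaneously establishes well-definedness (all denominators $y_i^Ts_i$ are strictly positive), symmetry, and positive definiteness of every matrix produced by \eqref{eqn:H-k}--\eqref{eqn:H-k-m}; parts (a) and (c) feed into each other through this induction. Write $\ell:=\lceil k/2\rceil$ and, for odd $k$, $g_\ell:=\nabla F(x_{k},\xi_{k-1})-\nabla F(x_{k-1},\xi_{k-1})$, so that $s_\ell=x_{k}-x_{k-1}$ and $y_\ell=g_\ell+\tau\mu_k^\delta s_\ell$.

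For part (a), I would expand $s_\ell^Ty_\ell=g_\ell^Ts_\ell+\tau\mu_k^\delta\|s_\ell\|^2$ and use monotonicity of $\nabla F(\cdot,\xi)$ (which holds by Assumption~\ref{assum:convex2}(a)) to drop $g_\ell^Ts_\ell\ge0$, giving $s_\ell^Ty_\ell\ge\tau\mu_k^\delta\|s_\ell\|^2\ge0$; strict positivity then reduces to $s_\ell\neq 0$, i.e.\ $x_{k}\neq x_{k-1}$. Since $x_{k}-x_{k-1}=-\gamma_{k-1}H_{k-1}\big(\nabla F(x_{k-1},\xi_{k-1})+\mu_{k-1}(x_{k-1}-x_0)\big)$ with $H_{k-1}$ nonsingular (the identity in the first $2m-2$ iterations, and positive definite afterwards by the part-(c) induction), this holds whenever the regularized gradient direction is nonzero, which is ensured by a standard non-degeneracy provision on the choice of $\mu_k$ in \eqref{eqn:mu-k}. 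The same computation applies verbatim at every odd iteration, so every pair $(s_i,y_i)$ entering \eqref{eqn:H-k-m} satisfies $y_i^Ts_i>0$ and the recursion is well defined.

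Part (b) is the one-step secant identity applied to the last ($j=m$, i.e.\ $i=\lceil k/2\rceil$) update of \eqref{eqn:H-k-m}, which uses precisely the pair $(s_{\lceil k/2\rceil},y_{\lceil k/2\rceil})$: writing $V:=\mathbf{I}-\tfrac{y_{\lceil k/2\rceil}s_{\lceil k/2\rceil}^T}{y_{\lceil k/2\rceil}^Ts_{\lceil k/2\rceil}}$ one has $Vy_{\lceil k/2\rceil}=0$, hence $H_{k}y_{\lceil k/2\rceil}=V^TH_{k,m-1}\big(Vy_{\lceil k/2\rceil}\big)+\tfrac{s_{\lceil k/2\rceil}\,s_{\lceil k/2\rceil}^Ty_{\lceil k/2\rceil}}{y_{\lceil k/2\rceil}^Ts_{\lceil k/2\rceil}}=s_{\lceil k/2\rceil}$.

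For part (c), symmetry, $\sF_k$-measurability and positive definiteness follow from the induction: $H_{k,0}$ is a positive multiple of $\mathbf{I}$ (positive since $s_\ell^Ty_\ell>0$), and each map $X\mapsto V_i^TXV_i+\tfrac{1}{y_i^Ts_i}s_is_i^T$ preserves symmetry and positive definiteness (a null vector of the image must be a common null vector of $s_i^T$ and $V_i$, hence zero); measurability holds because, at odd $k$, $H_k$ is assembled from the iterates up to $x_k$ and the samples up to $\xi_{k-1}$, all $\sF_k$-measurable (this is exactly why $y_{\lceil k/2\rceil}$ uses $\xi_{k-1}$ rather than $\xi_k$), while for even $k$ and for $k<2m-1$ it equals $H_{k-1}$ or $\mathbf{I}$. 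The eigenvalue bounds are the technical core; I would pass to $B_{k,j}:=H_{k,j}^{-1}$, which satisfies the dual BFGS recursion $B_{k,j}=B_{k,j-1}-\tfrac{B_{k,j-1}s_is_i^TB_{k,j-1}}{s_i^TB_{k,j-1}s_i}+\tfrac{y_iy_i^T}{y_i^Ts_i}$ with $B_{k,0}=\tfrac{\|y_\ell\|^2}{s_\ell^Ty_\ell}\mathbf{I}$ and $\det B_{k,j}=\det B_{k,j-1}\cdot\tfrac{y_i^Ts_i}{s_i^TB_{k,j-1}s_i}$. Lipschitz continuity together with co-coercivity of $\nabla F(\cdot,\xi)$ (Assumption~\ref{assum:convex2}(b)) yields, for every pair used, $\tfrac{\|y_i\|^2}{y_i^Ts_i}\le L+\tau\mu_{2i-1}^\delta\le L+\tau\mu_0^\delta$ and $y_i^Ts_i\ge\tau\mu_{2i-1}^\delta\|s_i\|^2\ge\tau\mu_k^\delta\|s_i\|^2$ (the last step uses that $\{\mu_k\}$ is non-increasing, so $\mu_{2i-1}\ge\mu_k$ for the indices $i$ relevant to $H_k$). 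Summing the trace inequality over the $m$ updates gives $\mathrm{tr}(B_{k,m})\le(n+m)(L+\tau\mu_0^\delta)=:S$, whence $\lambda_{\max}(B_{k,m})\le S$ and $\lambda_{\min}(H_k)\ge 1/S$. For the remaining bound, $s_i^TB_{k,j-1}s_i\le\mathrm{tr}(B_{k,j-1})\|s_i\|^2\le S\|s_i\|^2$ gives $\det B_{k,m}\ge\big(\tau\mu_k^\delta\big)^n\big(\tfrac{\tau\mu_k^\delta}{S}\big)^m=:P$; applying Lemma~\ref{sumProductBounds} to the eigenvalues of $B_{k,m}$ (sum bound $S$, product bound $P$) gives $\lambda_{\min}(B_{k,m})\ge\tfrac{(n-1)!\,P}{S^{n-1}}$, i.e.\ $\lambda_{\max}(H_k)\le\tfrac{S^{n+m-1}}{(n-1)!(\tau\mu_k^\delta)^{n+m}}$, which is exactly \eqref{proof:H_kbounds} after inserting $S=(n+m)(L+\tau\mu_0^\delta)$, and reading off the exponents yields $\lambda_{\min},\lambda,\alpha$ as in \eqref{equ:valuesForAssumH_k} (note $\alpha=-\delta(n+m)\le0$). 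Finally, for even $k\ge2m$ one has $H_k=H_{k-1}$ and $\mu_{k-1}\ge\mu_k$, so the stated (larger) upper bound and the $\mu$-free lower bound remain valid, covering all $k\ge2m-1$. The main obstacle is getting the constants exactly right: the sharp estimate $\tfrac{\|y_i\|^2}{y_i^Ts_i}\le L+\tau\mu_0^\delta$ genuinely needs co-coercivity (a naive triangle-inequality bound loses a factor), and one must propagate the trace bound carefully into the determinant estimate so that Lemma~\ref{sumProductBounds} returns precisely the claimed power of $\mu_k$.
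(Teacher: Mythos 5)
Your proposal is correct and follows essentially the same route as the paper's proof: induction over odd $k\geq 2m-1$, monotonicity of $\nabla F(\cdot,\xi)$ plus nonsingularity of $H_{k-1}$ and a nondegeneracy proviso for the curvature condition, and, for the eigenvalue bounds, passage to the inverse recursion $B_{k,j}$, the co-coercivity estimate $\tau\mu^\delta\leq \|y_i\|^2/(y_i^Ts_i)\leq L+\tau\mu^\delta$, the trace bound, the determinant bound, and Lemma~\ref{sumProductBounds}. The only minor deviations are that you verify the secant equation directly on the $H$-recursion via $\bigl(\mathbf{I}-\tfrac{y s^T}{y^Ts}\bigr)y=0$ instead of showing $B_k s_{\lceil k/2\rceil}=y_{\lceil k/2\rceil}$ and inverting as the paper does, and your bookkeeping of which regularization parameter ($\mu_{2i-1}$ versus $\mu_k$ versus $\mu_0$) enters each stored pair is, if anything, slightly more careful than the paper's while yielding the same bounds \eqref{proof:H_kbounds}.
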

\fy{\begin{proof}
See Appendix \ref{app:lemmaLBFGSmatrix}.
\end{proof}}

\fy{In the following two lemmas, we provide update rules for the stepsize and the regularization parameter to ensure \fys{convergence in \vus{both} an a.s. and mean sense} for the proposed LBFGS scheme.}
\begin{lemma}[Feasible tuning sequences for a.s. convergence \fy{(Proposition \ref{prop:a.s})}]\label{lemma:a.s.sequences}
Let the sequences $\g_k$ and $\mu_k$ be given by the following rules:
\begin{align}\label{equ:seq}
 \g_k=\frac{\g_0}{(k+1)^a}, \quad \mu_{k}=\frac{\fy{2^b\mu_0}}{\left(k+\kappa\right)^b},
\end{align}
where $\kappa=2$ if $k$ is even and $\kappa=1$ otherwise, $\g_0$and $\mu_0$ are positive scalars such that $\g_0\mu_0 \leq L(m+n)$, \fy{and $a$, $b$, and $\delta\leq 1$ are positive scalars \fys{satisfying}:
\begin{align*}& \fy{\frac{a}{b}>1+2\delta(n+m)}, \quad a+b \leq 1,\quad a+2b > 1, \quad\hbox{and}\quad a-\delta b(m+n)>0.5.\end{align*}
Then, $\g_k$ and $\mu_k$ satisfy Assumption \ref{assum:sequences} with $\lambda_{\min}$ and $\alpha$ in \eqref{equ:valuesForAssumH_k} and $\mu_k$ satisfies \eqref{eqn:mu-k}.}
\end{lemma}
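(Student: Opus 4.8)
The plan is to verify the six conditions (a)--(f) of Assumption~\ref{assum:sequences} one at a time, together with the update pattern \eqref{eqn:mu-k}; since $\gamma_k=\Theta(k^{-a})$ and $\mu_k=\Theta(k^{-b})$, every condition reduces to an inequality between polynomial exponents, and these are precisely the four hypotheses imposed on $(a,b,\delta)$ (plus the structural facts about $\mu_k$ and the standing bound $\gamma_0\mu_0\le L(m+n)$). First I would record the elementary two-sided estimates, valid for all $k\ge 0$: since the offset satisfies $\kappa\in\{1,2\}$, we have $(k+1)^b\le(k+\kappa)^b\le(k+2)^b$, hence $\tfrac{2^b\mu_0}{(k+2)^b}\le\mu_k\le\tfrac{2^b\mu_0}{(k+1)^b}$, while $\gamma_k=\tfrac{\gamma_0}{(k+1)^a}$; in particular $\mu_k\le\mu_0$ and $\gamma_k\le\gamma_0$. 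I would also substitute the values from \eqref{equ:valuesForAssumH_k}, so that $2\alpha=-2\delta(n+m)<0$, $2\alpha-1=-(2\delta(n+m)+1)<0$, and $\lambda_{\min}=\tfrac{1}{(m+n)(L+\tau\mu_0^\delta)}$.

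I would then dispose of the structural items. A direct computation gives $\mu_{2j}=\mu_{2j+1}=\tfrac{2^b\mu_0}{(2j+2)^b}$ for $j\ge0$; consequently $\mu_k=\mu_{k-1}$ when $k$ is odd, whereas for even $k\ge2$ one has $\mu_k=\tfrac{2^b\mu_0}{(k+2)^b}<\tfrac{2^b\mu_0}{k^b}=\mu_{k-1}$. This shows simultaneously that $\{\mu_k\}$ is non-increasing, that $\mu_k\to0$ (as $b>0$), and that \eqref{eqn:mu-k} holds, so Assumption~\ref{assum:sequences}(b) is met. For condition (c), the uniform bounds above give $\gamma_k\mu_k\le\gamma_0\mu_0$, and combining with $\gamma_0\mu_0\le L(m+n)$ and the form of $\lambda_{\min}$ yields $\lambda_{\min}\gamma_k\mu_k\le\tfrac{L(m+n)}{(m+n)(L+\tau\mu_0^\delta)}=\tfrac{L}{L+\tau\mu_0^\delta}<1$ for every $k\ge0$.

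The four remaining conditions each match one hypothesis on $(a,b,\delta)$. For (d), $\gamma_k\mu_k\ge\tfrac{\gamma_0 2^b\mu_0}{(k+2)^{a+b}}$, so $\sum_k\gamma_k\mu_k=\infty$ because $a+b\le1$. For (e), $\mu_k^2\gamma_k\le\tfrac{2^{2b}\mu_0^2\gamma_0}{(k+1)^{a+2b}}$, which is summable because $a+2b>1$. For (f), since $2\alpha<0$ we get $\mu_k^{2\alpha}\le\big(\tfrac{2^b\mu_0}{(k+2)^b}\big)^{2\alpha}=(2^b\mu_0)^{2\alpha}(k+2)^{2\delta(n+m)b}$, hence (using $k+2\le2(k+1)$) $\gamma_k^2\mu_k^{2\alpha}\le C\,(k+1)^{-(2a-2\delta(n+m)b)}$ for some constant $C$, and this series converges precisely because $a-\delta b(m+n)>\tfrac12$. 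For (a), the same manipulation applied with the exponent $2\alpha-1$ gives $\gamma_k\mu_k^{2\alpha-1}\le C'\,(k+1)^{-(a-(2\delta(n+m)+1)b)}$, which tends to $0$ because $\tfrac{a}{b}>1+2\delta(n+m)$. Assembling the six verifications completes the proof. I expect no genuine obstacle here; the only points requiring a little care are the bookkeeping caused by the alternating offset $\kappa$ in $\mu_k$ (fully handled by the uniform bounds $(k+1)^b\le(k+\kappa)^b\le(k+2)^b$) and checking that condition (c) really does use both the standing assumption $\gamma_0\mu_0\le L(m+n)$ and the explicit $\lambda_{\min}$ supplied by Lemma~\ref{LBFGS-matrix}.
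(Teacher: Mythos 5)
Your proposal is correct and follows essentially the same route as the paper's own proof in Appendix B.3: verify the structural facts about $\mu_k$ (which also give \eqref{eqn:mu-k}), use $\gamma_0\mu_0\le L(m+n)$ together with the explicit $\lambda_{\min}$ for condition (c), and reduce conditions (a), (d), (e), (f) to the exponent inequalities $a>(1+2\delta(n+m))b$, $a+b\le 1$, $a+2b>1$, and $a-\delta b(m+n)>0.5$, respectively. Your handling of the offset $\kappa$ via $(k+1)^b\le(k+\kappa)^b\le(k+2)^b\le(2(k+1))^b$ is, if anything, slightly more careful with constants than the paper's write-up, but the argument is the same.
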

\fy{\begin{proof}
See Appendix \ref{app:feasibleSeqASconv}.
\end{proof}}

\fy{\begin{remark}[An example \fy{of} feasible sequences]
The conditions on parameters $a$, $b$, $\g_0$, and $\mu_0$ in Lemma \ref{lemma:a.s.sequences} hold for $\g_0=\mu_0\leq\sqrt{L}$, $a=\frac{5}{6}$ and $b=\frac{1}{6}$, and $\delta=\frac{1}{m+n}$. 
\end{remark}}
\begin{lemma}[Feasible tuning sequences for convergence in mean \fy{(Proposition \ref{prop:mean})}]\label{lemma:mean-sequences}
Let the sequences $\g_k$ and $\mu_k$ be given by \eqref{equ:seq},
where $\g_0$ and $\mu_0$ are positive scalars such that $\g_0\mu_0 \leq L(m+n)$. \fy{Let, $0 <\delta \leq 1$ $a>0$ and $b>0$ \fys{satisfying}:
\begin{align*}& \frac{a}{b}>1+2\delta\left(m+n\right), \quad a+b < 1,
\quad \frac{a}{b} \leq
2\left(1+\delta\left(m+n\right)\right).\end{align*}Then, \vus{$\mu_k$
satisfies \eqref{eqn:mu-k}} and $\g_k$ and $\mu_k$ satisfy Assumption
\ref{assum:sequences-ms-convergence} with any arbitrary $0<\beta<1$,
$\rho\triangleq \g_0^{-1}\left(\mu_02^b\right)^{2+2\delta(m+n)}$,
and with $\lambda_{\min}$ and $\alpha$ given by
\eqref{equ:valuesForAssumH_k}.} \end{lemma}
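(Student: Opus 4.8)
The plan is to verify the five conditions (a)--(e) of Assumption~\ref{assum:sequences-ms-convergence} directly from the explicit expressions in~\eqref{equ:seq}, using the values $\lambda_{\min}=\frac{1}{(m+n)(L+\tau\mu_0^\delta)}$ and $\alpha=-\delta(m+n)$ supplied by Lemma~\ref{LBFGS-matrix} and~\eqref{equ:valuesForAssumH_k}, so that $2\alpha-1=-(2\delta(m+n)+1)<0$ and $2-2\alpha=2+2\delta(m+n)$. I would begin by recording two elementary facts used throughout: since $k+\kappa\ge2$ for every $k\ge0$, we have $\mu_k\le\mu_0$ and $\gamma_k\le\gamma_0$; and since $\kappa=1$ for odd $k$ while $\kappa=2$ for even $k$, a direct substitution shows that $\mu_{k-1}=\mu_k$ when $k$ is odd and $\mu_{k-1}>\mu_k$ when $k$ is even, while $\mu_k\to0$ because $b>0$. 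The latter fact is precisely Assumption~\ref{assum:sequences-ms-convergence}(b) together with the compatibility with~\eqref{eqn:mu-k}.

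Conditions (a), (c), and (e) are then immediate. For (c), the first fact gives $\gamma_k\mu_k\le\gamma_0\mu_0\le L(m+n)$, hence $\lambda_{\min}\gamma_k\mu_k\le\frac{L}{L+\tau\mu_0^\delta}<1$. For (a), from~\eqref{equ:seq} the quantity $\gamma_k\mu_k^{2\alpha-1}$ is of order $k^{-(a-b(1+2\delta(m+n)))}$, which tends to $0$ exactly because $a/b>1+2\delta(m+n)$. For (e),
\[
\frac{\mu_k^{2-2\alpha}}{\gamma_k}=\frac{(2^b\mu_0)^{2+2\delta(m+n)}}{\gamma_0}\cdot\frac{(k+1)^a}{(k+\kappa)^{b(2+2\delta(m+n))}}\le\frac{(2^b\mu_0)^{2+2\delta(m+n)}}{\gamma_0},
\]
where the inequality uses $k+\kappa\ge k+1$ together with $a/b\le2(1+\delta(m+n))$, which forces $(k+1)^{a-b(2+2\delta(m+n))}\le1$ for every $k\ge0$; thus $\rho:=\gamma_0^{-1}(2^b\mu_0)^{2+2\delta(m+n)}$ is admissible, matching the claimed value.

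The substantive step is Assumption~\ref{assum:sequences-ms-convergence}(d), which I would establish by splitting on the parity of $k$, since $\mu_k$ carries the shift $\kappa$. For odd $k$, the second fact gives $\mu_{k-1}=\mu_k$, so $\gamma_{k-1}\mu_{k-1}^{2\alpha-1}/(\gamma_k\mu_k^{2\alpha-1})=\gamma_{k-1}/\gamma_k=(1+1/k)^a\le1+a/k$, where $a<1$ follows from $a+b<1$; since $\gamma_k\mu_k$ is of order $k^{-(a+b)}$ with $a+b<1$, we have $k\,\gamma_k\mu_k\to\infty$, hence $a/k\le\beta\lambda_{\min}\gamma_k\mu_k$ for all large $k$. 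For even $k$ one has $\mu_{k-1}/\mu_k=((k+2)/k)^b$, so the ratio equals $(1+1/k)^a(1+2/k)^{b(2\alpha-1)}$; taking logarithms, its leading term is $(a-2b(2\delta(m+n)+1))/k+\mathcal{O}(1/k^2)$, and since $a\le2b(1+\delta(m+n))<2b(2\delta(m+n)+1)$ (using $\delta(m+n)>0$) this leading coefficient is negative, so the ratio is $\le1\le1+\beta\lambda_{\min}\gamma_k\mu_k$ for large $k$. Taking $K_0$ larger than both parity thresholds then yields (d) with any prescribed $\beta\in(0,1)$, which completes the verification.

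I expect the only real obstacle to be the bookkeeping forced by the $\kappa$-dependent shift in $\mu_k$: one must track the fact that $\gamma_{k-1}\mu_{k-1}^{2\alpha-1}/(\gamma_k\mu_k^{2\alpha-1})$ behaves genuinely differently on odd and even $k$, and then check that on the odd steps the $\mathcal{O}(1/k)$ overshoot produced by $(1+1/k)^a$ is absorbed by $\beta\lambda_{\min}\gamma_k\mu_k\asymp k^{-(a+b)}$ (which is precisely why $a+b<1$ is imposed), whereas on the even steps the additional decay coming from $\mu_{k-1}/\mu_k>1$ drives the ratio strictly below $1$ (which uses $a/b\le2(1+\delta(m+n))$). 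Everything else reduces to elementary estimates for power sequences, and the hypotheses $a/b>1+2\delta(m+n)$ and $\gamma_0\mu_0\le L(m+n)$ feed directly into conditions (a), (c), and (e).
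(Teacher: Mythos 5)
Your proof is correct and follows essentially the same route as the paper's: a direct verification of conditions (a)--(e) of Assumption~\ref{assum:sequences-ms-convergence} from the explicit sequences \eqref{equ:seq}, with condition (d) resting on the comparison of the $\mathcal{O}(1/k)$ overshoot of $\gamma_{k-1}/\gamma_k=(1+1/k)^a$ against $\beta\lambda_{\min}\gamma_k\mu_k\asymp k^{-(a+b)}$ with $a+b<1$. The only cosmetic difference is your parity split in (d): the paper handles both parities at once by noting that $\mu_k\le\mu_{k-1}$ and $1-2\alpha>0$ give $\left(\mu_k/\mu_{k-1}\right)^{1-2\alpha}\le 1$, so your even-step Taylor expansion (and its extra use of $a/b\le 2\left(1+\delta(m+n)\right)$, which the paper needs only for condition (e)) is valid but unnecessary.
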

\fy{\begin{proof}
See Appendix \ref{app:feasibleSeqMeanConv}.
\end{proof}}

\subsection{An efficient implementation with rate analysis}\label{sec:LBFGS-rate} 
\fy{\begin{algorithm}
  \caption{Iteratively Regularized Stochastic Limited-memory BFGS}
\label{algorithm:IR-S-BFGS}
    \begin{algorithmic}[1]
    \State\textbf{Input:} LBFGS memory parameter $m\geq 1$, Lipschitzian parameter $L>0$, random initial point $x_0 \in \Real^n$, initial stepsize $\g_0>0$, and initial regularization parameter $\mu_0>0$ such that $\g_0\mu_0 \leq  (m+n) L$, \fys{scalars $0<\epsilon < \frac{1}{3}$, $\delta \in \left(0,\frac{1.5\epsilon}{n+m}\right)$, and $\tau>0$}; 
    \State Set $a:=\frac{2}{3}-\epsilon+\frac{2\delta(n+m)}{3}$, $b:=\frac{1}{3}$;
    \For {$k=0,1,\ldots,$}
    		\State Compute $\g_k :=\frac{\g_0}{(k+1)^a}$ and $\mu_k:=\frac{\mu_02^b}{\left(k+1+\text{mod}(k+1,2)\right)^b}$;
        \State  {Evaluate the stochastic gradient $\nabla F(x_k,\xi_k)$;}
     \If {mod$(k,2)=1$}
    \State  Compute index $i:=\lceil k/2\rceil$;
     \State   Compute vector $s_i:=x_k-x_{k-1}$; 
     \State   Compute vector  $y_i:= \nabla F(x_k,\xi_{k-1})-\nabla F(x_{k-1},\xi_{k-1})+\fys{\tau}\mu_k^\delta s_i$;
      \If {$k> 2m$}
      \State Discard the vector pair $\{s_{i-m},y_{i-m}\}$ from storage;
      \EndIf
   \EndIf
 \If {$k < 2m-1$}
   \State Update solution iterate $x_{k+1}:=x_k-\g_k\left(\nabla F(x_k,\xi_k)+\mu_k(x_k-x_0)\right)$;
     \Else
     \State Initialize Hessian inverse $H_{k,0}:=\frac{s_i^Ty_i}{y_i^Ty_i}\mathbf{I}$;\hspace{.4in} \tikzmark{right}\tikzmark{top}
     \State Initialize $q:=\nabla F(x_k,\xi_k)+\mu_k(x_k-x_0)$;
     \For {$t=i:i-m+1$}
     \State Compute scalar $\alpha_{i-t+1}:=\frac{s_t^Tq}{s_t^Ty_t}$;
     \State Update vector $q:=q-\alpha_{i-t+1}y_t$;
     \EndFor
     \State Initialize vector $r:=H_{k,0}q$;
     \For {$t=i-m+1:i$}
     \State  Update vector $r:=r+\left(\alpha_{i-t+1}-\frac{y_t^Tr}{s_t^Ty_t}\right)s_t$;
     \EndFor\tikzmark{bottom}
             \State Update solution iterate $x_{k+1}:=x_k-\g_kr$;\Comment{\footnotesize{LBFGS update}} 
   
\EndIf
    
    \EndFor
   \end{algorithmic}
   \AddNote{top}{bottom}{right}{\footnotesize{LBFGS two-loop recursion}}
\end{algorithm}}
Algorithm \ref{algorithm:IR-S-BFGS} presents an efficient implementation of the proposed stochastic LBFGS scheme. Note that update rules for the stepsize and regularization parameter are specified in line $\#2$ and  $\#4$. \fys{Before presenting the complexity analysis in Theorem \ref{thm:rate}, we make some comments on the choice of parameter $\tau$.
\begin{remark} As mentioned, the parameter $\tau>0$ in line \#9 in
Algorithm \ref{algorithm:IR-S-BFGS} is used to control the level of the
iterative regularization employed in the computation of matrix $H_k$.
Intuitively, it may seem that a \vus{small} choice for $\tau$ \vus{may} reduce
the distortion caused by the term $\mu_k^\delta s_i$ in approximating
the Hessian inverse, and consequently, improve the performance of the
algorithm. However, this may not be always the case. To see this, first
note that the relation \eqref{proof:H_kbounds} shows the dependency of
eigenvalues of $H_k$ on the choice of $\tau$. Recall that the relation
\eqref{ineq:cond-recursive-F-k} is a key assumption used in the
convergence and rate analysis of the proposed method. It can be seen
that when $\tau \to 0$, the right-hand side of
\eqref{ineq:cond-recursive-F-k} will decrease to zero. This indicates
that a small $\tau$ enforces a small value for the term
$\gamma_0/\mu_0^{2\delta(n+m)}$. For example, assuming a fixed value
for $\mu_0$, this would lead to a small $\g_0$. This may have a
negative impact on the performance of the algorithm. As such, it is not
clear if a small $\tau$ can be always beneficial. A closer look into
this trade off calls for a more detailed analysis of the finite-time
performance of the algorithm, which is not the focus of our current
work and remains as a future direction to our study.  \end{remark}}
In Theorem \ref{thm:rate}(a), we establish the convergence rate of Algorithm \ref{algorithm:IR-S-BFGS}. Moreover, in Theorem \ref{thm:rate}(b), we show that the term $H_k\nabla F_k(x_k,\xi_k)$ is computed efficiently using \fyfy{the LBFGS two-loop recursion} in the algorithm with $\mathcal{O}\left(mn\right)$ complexity per iteration.

\begin{theorem}[\fy{Rate analysis for Algorithm \ref{algorithm:IR-S-BFGS}}]\label{thm:rate}
\fy{Consider Algorithm \ref{algorithm:IR-S-BFGS}. The following statements hold:}
\begin{itemize}
\item [(a)] Suppose Assumptions \ref{assum:main} and \ref{assum:convex2} hold. Then, there exists \fy{$K\geq2m-1$} such that 
\[\EXP{f(x_{k })}-f^*\leq \left(\frac{\theta\g_0}{\left(\mu_0\sqrt[3]{2}\right)^{1-2\alpha}}\right)\frac{1}{k^{\frac{1}{3}-\e}}, \quad \hbox{for all } k > K,\] 
where $\theta$ is given by \eqref{def:theta}, and $\lambda_{\min}$, $\lambda$, and $\alpha$ are given by \eqref{equ:valuesForAssumH_k}. 
\item [(b)] \fy{Let the scheme be at the $k$th iteration where $k\geq 2m-1$. Then, by the end of the LBFGS \fys{two-loop} recursion, i.e., line $\#$26 in Algorithm \ref{algorithm:IR-S-BFGS}, we have 
\begin{align}\label{eqn:r_Lemma}
  r=H_k\left(\nabla F(x_k,\xi_k)+\mu_k(x_k-x_0)\right),
\end{align}
where $H_{k}$ is defined by \eqref{eqn:H-k}.}
\end{itemize}
\end{theorem}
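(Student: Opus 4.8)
The theorem is essentially an assembly of earlier results specialized to the sequences hard-wired into Algorithm~\ref{algorithm:IR-S-BFGS}. First I would check that the choices $a=\tfrac23-\e+\tfrac{2\delta(n+m)}{3}$, $b=\tfrac13$ from lines~2--4, under the standing input restrictions $0<\e<\tfrac13$ and $0<\delta<\tfrac{1.5\e}{n+m}$ (which in particular force $\delta<1$ and $a>\tfrac13>0$), satisfy the hypotheses of Lemma~\ref{lemma:mean-sequences}: since $\tfrac ab=3a=2-3\e+2\delta(n+m)$, one has $\tfrac ab>1+2\delta(m+n)\iff\e<\tfrac13$, $a+b=1-\e+\tfrac{2\delta(n+m)}{3}<1\iff\delta<\tfrac{1.5\e}{n+m}$, and $\tfrac ab\le 2(1+\delta(m+n))\iff\e\ge0$. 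Together with the input condition $\g_0\mu_0\le(m+n)L$, Lemma~\ref{lemma:mean-sequences} then certifies that $\{\g_k\},\{\mu_k\}$ meet Assumption~\ref{assum:sequences-ms-convergence} and that $\mu_k$ obeys~\eqref{eqn:mu-k}. Next, Lemma~\ref{LBFGS-matrix}(c) guarantees that the matrices $H_k$ from \eqref{eqn:H-k}--\eqref{eqn:H-k-m} satisfy Assumption~\ref{assump:Hk} for all $k\ge 2m-1$ with the constants $\lambda_{\min},\lambda,\alpha$ in \eqref{equ:valuesForAssumH_k}. Hence every hypothesis of Proposition~\ref{prop:mean} holds, and it yields some $K$ — which may be taken $\ge 2m-1$ — with $\EXP{f(x_k)}-f^*\le\theta\,\g_{k-1}\mu_{k-1}^{2\alpha-1}$ for all $k>K$, $\theta$ as in~\eqref{def:theta}.

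\textbf{Part (a): finishing.} It then remains to plug in $\g_{k-1}=\g_0k^{-a}$ and $\mu_{k-1}=2^b\mu_0\big(k+\mathrm{mod}(k,2)\big)^{-b}$ and simplify the powers of $k$. Using $\alpha=-\delta(n+m)$, so that $1-2\alpha=1+2\delta(n+m)$ and $b(1-2\alpha)=\tfrac13+\tfrac{2\delta(n+m)}{3}$, the crucial identity is
\[
b(1-2\alpha)-a=\Big(\tfrac13+\tfrac{2\delta(n+m)}{3}\Big)-\Big(\tfrac23-\e+\tfrac{2\delta(n+m)}{3}\Big)=-\Big(\tfrac13-\e\Big),
\]
and with $b=\tfrac13$ one has $2^b=\sqrt[3]2$, hence $(2^b\mu_0)^{2\alpha-1}=(\mu_0\sqrt[3]2)^{-(1-2\alpha)}$. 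Bounding the alternating denominator $k+\mathrm{mod}(k,2)$ below by $k$ costs only a factor $(1+1/k)^{a}\le 2$ (tending to $1$), which is folded into the constant, and one arrives at $\EXP{f(x_k)}-f^*\le\big(\theta\g_0/(\mu_0\sqrt[3]2)^{1-2\alpha}\big)k^{-(1/3-\e)}$ for $k>K$. I expect this exponent bookkeeping — and making sure the parameter inequalities line up with Lemma~\ref{lemma:mean-sequences} — to be the only error-prone part of~(a); there is no genuine analytical obstacle, since all the work is already in Propositions~\ref{prop:a.s}--\ref{prop:mean} and Lemmas~\ref{LBFGS-matrix},~\ref{lemma:mean-sequences}.

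\textbf{Part (b).} Lines~17--26 of Algorithm~\ref{algorithm:IR-S-BFGS} are exactly the classical LBFGS two-loop recursion (cf.\ Ch.~7 of \cite{Nocedal2006NO}) run on the $m$ stored pairs $\{(s_t,y_t):t=i-m+1,\dots,i\}$ with $i=\lceil k/2\rceil$ and seed $H_{k,0}=\tfrac{s_i^Ty_i}{y_i^Ty_i}\mathbf{I}$, starting from $q=\nabla F(x_k,\xi_k)+\mu_k(x_k-x_0)$. The plan is to unroll the recursion \eqref{eqn:H-k-m}, writing $H_k=H_{k,m}$ as a nested product of the factors $V_t=\mathbf{I}-\tfrac{y_ts_t^T}{y_t^Ts_t}$ around $H_{k,0}$ plus rank-one corrections $\tfrac{s_ts_t^T}{y_t^Ts_t}$, and then to verify by induction on the loop counter that the backward loop produces the scalars $\alpha_{i-t+1}=\tfrac{s_t^Tq}{s_t^Ty_t}$ while transforming $q$ into $\big(\prod_{t}V_t\big)q$, and that the forward loop reassembles $r=H_kq$ from these quantities; the recursion is well posed because Lemma~\ref{LBFGS-matrix}(a) gives $s_t^Ty_t>0$. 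The only point needing care is index/storage alignment: the discard in line~12 keeps exactly the $m$ pairs appearing in \eqref{eqn:H-k-m}, and at even $k$ no new pair is computed, so the recursion reproduces $H_{k-1}$, consistent with \eqref{eqn:H-k}. The complexity claim is immediate — two loops of $m$ steps, each doing $O(n)$ work, plus $H_{k,0}q$ at $O(n)$ — hence $O(mn)$ per iteration.
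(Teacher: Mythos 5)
Your proposal is correct and follows essentially the same route as the paper: for (a) you verify the parameter conditions of Lemma~\ref{lemma:mean-sequences}, invoke Lemma~\ref{LBFGS-matrix}(c) and Proposition~\ref{prop:mean}, and finish with the exponent bookkeeping $b(1-2\alpha)-a=-(\tfrac13-\e)$; for (b) you unroll \eqref{eqn:H-k-m} into products of $V_t=\mathbf{I}-\tfrac{y_ts_t^T}{y_t^Ts_t}$ plus rank-one terms and match them to the two-loop recursion by induction, handling even $k$ via $H_k=H_{k-1}$, which is exactly the paper's argument (there carried out in full detail). The only caveat is the $(1+1/k)^{b(1-2\alpha)}$ factor you fold into the constant while still quoting the theorem's exact constant, but the paper's own displayed inequality is no tighter on this point, so it is not a substantive gap.
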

\begin{proof}
\noindent (a) First, we show that the conditions of Proposition~\ref{prop:mean} are satisfied. Assumption \ref{assum:convex} holds as a consequence of Assumption \ref{assum:convex2}. From Lemma \ref{LBFGS-matrix}(c), Assumption \ref{assump:Hk} holds for any \fy{$k \geq 2m-1$} as well. To show that Assumption \ref{assum:sequences-ms-convergence} holds, we apply Lemma \ref{lemma:mean-sequences}. We have 
\[\frac{a}{b}=\frac{\frac{2}{3}-\e+\frac{2\delta(n+m)}{3}}{1/3}=2-3\e+2\delta(n+m)>1+2\delta\left(m+n\right),\]
where we used $\e<\frac{1}{3}$. Moreover, since $\delta < \frac{1.5\e}{n+m}$, we have $a+b=1-\e +\frac{2\delta(n+m)}{3}<1$. Also, from the values of $a$ and $b$ we have $2b\left(1+\delta\left(m+n\right)\right)=a+\e>a.$
Thus, the conditions of Lemma \ref{lemma:mean-sequences} hold. This implies that there exists $K_0>0$ such that for any $k\geq K_0$,  the sequences $\g_k$ and $\mu_k$ satisfy Assumption \ref{assum:sequences-ms-convergence} with any arbitrary $0<\beta<1$ and for $\rho=\g_0^{-1}\left(\mu_02^b\right)^{2+2\delta(m+n)}$, and with $\lambda_{\min}$, $\lambda$ and $\alpha$ given by \eqref{equ:valuesForAssumH_k}. Let us define $K:=\max\{K_0,2m-1\}$. Since all conditions in Proposition~\ref{prop:mean} are met, \an{from \eqref{ineq:bound}, \eqref{equ:seq}, and by substituting values of $a$, $b$, and $\alpha$, for any $k \geq K$ we obtain}
\begin{align*}\EXP{f(x_{k+1})}-f^*&\leq \theta\g_{k-1}\mu_{k-1}^{2\alpha-1}= \frac{\theta\g_0(k+\kappa-1)^{(1-2\alpha)/3}}{\left(\mu_0\sqrt[3]{2}\right)^{1-2\alpha}k^{2/3-\e-\frac{2}{3}\alpha}}\\ &\leq  \frac{\theta\g_0(k+1)^{(1-2\alpha)/3}}{\left(\mu_0\sqrt[3]{2}\right)^{1-2\alpha}(k+1)^{2/3-\e-\frac{2}{3}\alpha}}=\left(\frac{\theta\g_0}{\left(\mu_0\sqrt[3]{2}\right)^{1-2\alpha}}\right)\frac{1}{(k+1)^{\frac{1}{3}-\e}}.\end{align*}
Through a change of variable from $k+1$ to $k$, we conclude the result. \\

\fy{\noindent (b)  To show \eqref{eqn:r_Lemma}, it suffices to show that
 \begin{align*}
  r=\begin{cases}
    H_{k,m}\left(\nabla F(x_k,\xi_k)+\mu_k(x_k-x_0)\right),  &    \text{if } k \text{ is odd}, \\
    H_{k-1,m}\left(\nabla F(x_k,\xi_k)+\mu_k(x_k-x_0)\right),  &   \text{otherwise},
  \end{cases}
\end{align*}
where $H_{k,m}$ is defined by the recursion \eqref{eqn:H-k-m} for an odd $k$. First, consider the case that $k\geq 2m-1$ is an odd number. As such, at the $k$th iteration, from line $\#$7, we have $i:=\lceil k/2 \rceil$. For clarity of the presentation, throughout \vus{this} proof, we use $K$ (instead of $i$), i.e., $K\triangleq \lceil k/2 \rceil$ \vus{and} $q_{K-t+1}$ \vus{is used to} denote the value of the vector $q \in \mathbb{R}^n$ after being updated at iteration $t$ in line $\#$21. Similarly, we use $r_{t-K+m}$ to denote the value of the vector $r \in \mathbb{R}^n$ after being updated at iteration $t$ in line $\#$25. Also, we use the following definitions:
\begin{align*}
& q_0\triangleq \nabla F(x_k,\xi_k)+\mu_k(x_k-x_0), \quad r_0\triangleq H_{k,0}q_{m},\cr 
&\rho_j\triangleq \frac{1}{y_j^Ts_j}, \hbox{ and }V_j\triangleq \mathbf{I}-\rho_{j}y_js_j^T, \quad \hbox{for all } j=K-(m-1),\ldots, K.
\end{align*}
Consider relation \eqref{eqn:H-k-m}. By applying this recursive relation repeatedly, we obtain
\begin{align}\label{equ:H_kClosedForm}
H_{k,m} &=\left(\prod_{j=1}^{m}V_{K-(m-j)}\right)^TH_{k,0}\left(\prod_{j=1}^{m}V_{K-(m-j)}\right)\\
&+ \rho_{K-m+1}\left(\prod_{j=2}^{m}V_{K-(m-j)}\right)^Ts_{K-m+1}s^T_{K-m+1}\left(\prod_{j=2}^{m}V_{K-(m-j)}\right) \notag\\
& + \rho_{K-m+2}\left(\prod_{j=3}^{m}V_{K-(m-j)}\right)^Ts_{K-m+2}s^T_{K-m+2}\left(\prod_{j=3}^{m}V_{K-(m-j)}\right)\notag\\
& + \ldots\notag\\
& +\rho_{K-1}V^T_{K}s_{K-1}s_{K-1}^TV_{K}\notag\\
&+\rho_{K}s_{K}s_{K}^T.\notag
\end{align}
Next, we derive a formula for $q_t$. From lines $\#$20-21 in the algorithm, we have
\begin{align*}
q_{K-t+1}&=q_{K-t} -\alpha_{K-t+1}y_t =q_{K-t} - \rho_t\left(s_t^Tq_{K-t}\right)y_t = q_{K-t} - \rho_t\left(y_ts_t^T\right)q_{K-t} \\
&= \left( \mathbf{I}-\rho_{t}y_ts_t^T\right)q_{K-t}=V_tq_{K-t}, \quad \hbox{for all } t= K, K-1, \ldots, K-m+1.
\end{align*}
From the preceding relation, we obtain
\begin{align}\label{equ:q_ell}
q_{\ell}=\left(\prod_{j=m-\ell+1}^{m}V_{K-(m-j)}\right)q_0, \quad \hbox{for all } \ell= 1,2, \ldots, m.
\end{align}
From the update rule for $\alpha_{i-t+1}$ in line $\#$20, using the definition of $\rho_t$, and applying the previous relation, we have $\alpha_1=\rho_{K}s_K^Tq_0$ and 
\begin{align}\label{equ:alpha_ell}
\alpha_{\ell}=\rho_{K-\ell+1}s_{K-\ell+1}^T\left(\prod_{j=m-\ell+2}^{m}V_{K-(m-j)}\right)q_0, \quad \hbox{for all } \ell= 2,3, \ldots, m.
\end{align}
\fys{Multiplying both sides of} \eqref{equ:H_kClosedForm} by $q_0$ and employing \eqref{equ:q_ell} and \eqref{equ:alpha_ell}, we obtain
\begin{align}\label{equ:H_kq_0}
H_{k,m}q_0 &=\left(\prod_{j=1}^{m}V_{K-(m-j)}\right)^TH_{k,0}q_m + \left(\prod_{j=2}^{m}V_{K-(m-j)}\right)^Ts_{K-m+1}\alpha_m \\
& + \left(\prod_{j=3}^{m}V_{K-(m-j)}\right)^Ts_{K-m+2}\alpha_{m-1}  +\ldots+V^T_{K}s_{K-1}\alpha_2+s_{K}\alpha_1.\notag
\end{align}
Next, we derive a formula for $r_t$. From line $\#$25 in the algorithm, we have 
\begin{align*}
r_{t-K+m}&=r_{t-K+m-1}+\left(\alpha_{K-t+1}-\rho_ty_t^Tr_{t-K+m-1}\right)s_t \\
&= r_{t-K+m-1}-\rho_ts_t y_t^Tr_{t-K+m-1}+\alpha_{K-t+1}s_t\\
& = V_t^Tr_{t-K+m-1}+\alpha_{K-t+1}s_t, \quad \hbox{for all } t= K-m+1,\ldots,K-1,  K.
\end{align*}
Combining the preceding two relations, we obtain
\begin{align*}
r_{\ell}&= V_{K-(m-\ell)}^Tr_{\ell-1}+\alpha_{m-\ell+1}s_{K-(m-\ell)}, \quad \hbox{for all } \ell= 1,2,\ldots,m.
\end{align*}
Using the preceding equation repeatedly, we obtain
\begin{align}\label{equ:r_m}
r_m &=\left(\prod_{j=1}^{m}V_{K-(m-j)}\right)^Tr_0 +\alpha_m \left(\prod_{j=2}^{m}V_{K-(m-j)}\right)^Ts_{K-m+1} \\
& + \alpha_{m-1} \left(\prod_{j=3}^{m}V_{K-(m-j)}\right)^Ts_{K-m+2} +\ldots+\alpha_2V^T_{K}s_{K-1}+\alpha_1s_{K}.\notag
\end{align}
From \eqref{equ:r_m} and \eqref{equ:H_kq_0}, and the definition of  $r_0$, we obtain $r_m=H_{k,m}q_0$. Taking to account the definition of $q_0$, the desired result holds for any odd $k\geq 2m-1$. Now, consider the case where $k\geq 2m-1$ is an even number. This implies that the ``if'' condition in line $\#$6 is skipped and as such, the value of $i$ is not updated from the iteration $k-1$, i.e.,  $i= \lceil (k-1)/2 \rceil$. Consequently, the LBFGS two-loop recursion at an even $k$ uses the following pairs 
$$\left\{(s_\ell,y_\ell)\mid \ell=\lceil (k-1)/2\rceil-m+1, \lceil (k-1)/2\rceil-m+2,\ldots,\lceil (k-1)/2\rceil\right\}.$$
\fys{Now,} considering the definition \eqref{eqn:H-k-m} for $k-1$, the desired relation can be shown following the same steps discussed for the case the iteration number is odd.}
\end{proof}
\subsection{Analysis \vus{of} the deterministic case}\label{sec:LBFGS-deterministic}
Our goal in the remainder of this section lies in establishing the convergence and rate statement for the deterministic LBFGS scheme. Consider the following regularized deterministic LBFGS method:
\begin{align}\label{eqn:R-L-BFGS}\tag{IR-LBFGS}
x_{k+1}:=x_k -\gamma_kH_k\left(\nabla f(x_k)+ \mu_k \left(x_k-x_0\right)\right), \quad \hbox{for all } k \geq 0, 
\end{align}
where $H_k$ is given by the update rule \eqref{eqn:H-k}, $\mu_k$ is updated according to \eqref{eqn:mu-k}\fy{, and for an odd $k \geq 1$ we set} \begin{align}\label{equ:siyi-DLBFGS}&s_{\lceil k/2\rceil}:= x_{k}-x_{k-1},\cr 
&{y_{\lceil k/2\rceil}}:=  \nabla f(x_{k}) -\nabla f(x_{k-1})  +  \fys{\tau}\mu_k^\delta s_{\lceil k/2\rceil}.\end{align} 
\begin{theorem}[Convergence and rate analysis of \fy{iteratively} regularized deterministic LBFGS method]
Let $x_k$ be generated by the \ref{eqn:R-L-BFGS} method. Suppose Assumption \ref{assum:convex} holds. Let $\lambda_{\min}$, $\lambda$ and $\alpha$ be given by \eqref{equ:valuesForAssumH_k}. Then \vus{the following hold.} 

\noindent (a) Let $\mu_k$ satisfies \eqref{eqn:mu-k}. If $\g_k$ and $\mu_k$ satisfy the following relation: \begin{align}\label{DLBFGScond}\g_k\mu_k^{2\alpha} \leq \frac{\lambda_{\min}}{\lambda^2(L+\mu_0)},\quad \hbox{for all }k\geq 0,
\end{align}then, for any $k \geq 0$, we have
\begin{align}\label{ineq:DLBFGSbound}
f_{k+1}(x_{k+1})-f^* &\leq (1-\lambda_{\min}\mu_k\g_k)(f_k(x_k)-f^*)+\frac{\lambda_{\min}\mbox{dist}^2(x_0,X^*)}{2}\mu_k^2\g_k.
\end{align}
\noindent (b) Let $\g_k$ and $\mu_k$ be given by the update rule \eqref{equ:seq} where \fy{$a, b>0$ and $0<\delta \leq 1$ satisfy
\[\frac{a}{b}>2\delta(n+m) \quad a+b\leq 1, \quad a+2b>1.\]Then, $\lim_{k\to \infty}f(x_k)=f^*$. Specifically, for $a=\frac{4}{5}$, $b=\frac{1}{5}$, and $\delta=\frac{1}{m+n}$, this result holds.}

\noindent (c)  Let $\e \in (0,1)$ be an arbitrary small scalar. Let $\g_k$ and $\mu_k$ be given by the update rule \eqref{equ:seq} where $a=\e$, $b=1-\e$. Also, assume \fy{$\delta \in \left(0,\frac{\e}{2(n+m)(1-\e)}\right)$.} Let $\g_0$ and $\mu_0$ satisfy the following condition:
\begin{align}\label{DLBFGScondmu0gamma0}\g_0\mu_0\geq (n+m)\left(L+\fys{\tau}\mu_0^\delta\right).\end{align}
Then, there exists $K$ such that  
\begin{align}\label{rateRDLBGFS}
f(x_k)-f^*\leq \frac{\Gamma}{(k+1)^{1-\e}}, \quad \hbox{for all } k \geq K,
\end{align}
where $\Gamma \triangleq \max\bigg\{(K+1)^{1-\e}\left(f_K(x_K)-f^*\right), \frac{\lambda_{\min}\g_0\mu_0^2{\mbox{dist}}^2(x_0,X^*)}{4^{a}(\lambda_{\min}\g_0\mu_0-b)} \bigg\}.$ 
\end{theorem}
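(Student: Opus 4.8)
The plan is to recognize \eqref{ineq:DLBFGSbound} as the noiseless instance of Lemma~\ref{lemma:main-ineq}. Indeed, the deterministic scheme \eqref{eqn:R-L-BFGS} is the special case of \eqref{eqn:LM-cyclic-reg-BFGS} in which $\xi$ is a point mass, so that $F(\cdot,\xi)\equiv f$, Assumption~\ref{assum:main} holds with $\nu=0$, and $w_k\equiv0$; the matrix $H_k$ from \eqref{eqn:H-k}--\eqref{eqn:H-k-m} built with $s_i,y_i$ as in \eqref{equ:siyi-DLBFGS} is deterministic, hence trivially $\sF_k$-measurable, and by Lemma~\ref{LBFGS-matrix}(c) satisfies Assumption~\ref{assump:Hk} with the constants \eqref{equ:valuesForAssumH_k} for $k\ge 2m-1$ (for $k<2m-1$ one has $H_k=\mathbf{I}$ and the same chain of inequalities goes through; in any case parts (b),(c) use the bound only for large $k$). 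Since \eqref{DLBFGScond} is precisely condition \eqref{mainLemmaCond} and $\mu_k$ is non-increasing by \eqref{eqn:mu-k}, all hypotheses of Lemma~\ref{lemma:main-ineq} hold, and its conclusion \eqref{ineq:cond-recursive-F-k} with $\nu=0$ (the conditional expectations being superfluous) is exactly \eqref{ineq:DLBFGSbound}.

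\textbf{Part (b).} Set $v_k\triangleq f_k(x_k)-f^*\ge0$, $\alpha_k\triangleq\lambda_{\min}\gamma_k\mu_k$, and $\beta_k\triangleq\tfrac12\lambda_{\min}\,\mathrm{dist}^2(x_0,X^*)\,\mu_k^2\gamma_k$, so \eqref{ineq:DLBFGSbound} reads $v_{k+1}\le(1-\alpha_k)v_k+\beta_k$. First I would observe that \eqref{DLBFGScond} holds past some threshold: with $\mu_k^{2\alpha}=\mu_k^{-2\delta(n+m)}$ one has $\gamma_k\mu_k^{2\alpha}\asymp(k+1)^{-a}(k+\kappa)^{2\delta b(n+m)}\to0$ exactly when $a/b>2\delta(n+m)$, so \eqref{ineq:DLBFGSbound} holds for all $k\ge K$ for some $K\ge 2m-1$. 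Then I would verify the hypotheses of Lemma~\ref{lemma:probabilistic_bound_polyak}: $\alpha_k\to0$ (so $\alpha_k\le1$ for $k\ge K$); $\sum_k\alpha_k=\infty$ since $\alpha_k\asymp(k+1)^{-(a+b)}$ with $a+b\le1$; $\sum_k\beta_k<\infty$ since $\beta_k\asymp(k+1)^{-(a+2b)}$ with $a+2b>1$; and $\beta_k/\alpha_k=\tfrac12\,\mathrm{dist}^2(x_0,X^*)\,\mu_k\to0$ because $b>0$. Lemma~\ref{lemma:probabilistic_bound_polyak} then yields $v_k\to0$, and writing $v_k=(f(x_k)-f^*)+\tfrac{\mu_k}{2}\|x_k-x_0\|^2$ as a sum of nonnegative terms gives $f(x_k)\to f^*$. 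The stated choice $a=\tfrac45$, $b=\tfrac15$, $\delta=\tfrac1{m+n}$ clearly satisfies $a/b=4>2$, $a+b=1$, $a+2b=\tfrac65>1$.

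\textbf{Part (c).} Now $a=\e$, $b=1-\e$, giving $a+b=1$, $a+2b=2-\e>1$, and $a/b=\e/(1-\e)>2\delta(n+m)$ by the restriction on $\delta$; as in part (b), \eqref{ineq:DLBFGSbound} holds for all $k\ge K$ for a suitable $K$, which we enlarge so that also $\lambda_{\min}\gamma_k\mu_k\le1$ for $k\ge K$. The plan is to prove by induction on $k\ge K$ the sharpened estimate $f_k(x_k)-f^*\le\Gamma/(k+1)^{1-\e}$, whence \eqref{rateRDLBGFS} follows from $f(x_k)\le f_k(x_k)$; the base case $k=K$ is the first argument of the $\max$ defining $\Gamma$. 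For the step, insert the hypothesis into \eqref{ineq:DLBFGSbound}; it then suffices to establish
\[
\tfrac12\lambda_{\min}\,\mathrm{dist}^2(x_0,X^*)\,\mu_k^2\gamma_k\ \le\ \Gamma\!\left(\frac{1}{(k+2)^{1-\e}}-\frac{1}{(k+1)^{1-\e}}+\frac{\lambda_{\min}\gamma_k\mu_k}{(k+1)^{1-\e}}\right).
\]
The crucial fact is the lower bound $\lambda_{\min}\gamma_k\mu_k(k+1)=\lambda_{\min}\gamma_0\mu_0\,2^{b}\big(\tfrac{k+1}{k+\kappa}\big)^{b}\ge\lambda_{\min}\gamma_0\mu_0\ge1>1-\e=b$, using \eqref{DLBFGScondmu0gamma0} and $\tfrac{2(k+1)}{k+\kappa}\ge1$; together with $\tfrac1{(k+1)^{1-\e}}-\tfrac1{(k+2)^{1-\e}}\le\tfrac{1-\e}{(k+1)^{2-\e}}$ this makes the right-hand side at least $\Gamma(\lambda_{\min}\gamma_0\mu_0-b)/(k+1)^{2-\e}$. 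Since $\mu_k^2\gamma_k(k+1)^{2-\e}=4^{1-\e}\gamma_0\mu_0^2\big(\tfrac{k+1}{k+\kappa}\big)^{2(1-\e)}$ is bounded by a numerical constant times $4^{-\e}\gamma_0\mu_0^2$, the required inequality reduces to a lower bound on $\Gamma$ of exactly the form of the second argument of the $\max$ in the statement (after, if necessary, enlarging $K$ so that $\big(\tfrac{k+1}{k+\kappa}\big)^{b}$ is close enough to $1$ to absorb the remaining numerical factor). Completing the induction and using $0\le f(x_k)-f^*\le f_k(x_k)-f^*$ gives \eqref{rateRDLBGFS}.

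\textbf{Main obstacle.} The heart of the argument is the calibration in part (c): the perturbation $\tfrac12\lambda_{\min}\mathrm{dist}^2(x_0,X^*)\,\mu_k^2\gamma_k$ must be dominated by the gain $\Gamma\lambda_{\min}\gamma_k\mu_k/(k+1)^{1-\e}$ net of the drift $\Gamma\big(1/(k+1)^{1-\e}-1/(k+2)^{1-\e}\big)$, which is exactly why the \emph{reversed} inequality \eqref{DLBFGScondmu0gamma0} on $\gamma_0\mu_0$ (forcing $\lambda_{\min}\gamma_0\mu_0\ge1>b$) is imposed and which dictates the choice of $\Gamma$; in addition one must track carefully the two-valued offset $\kappa\in\{1,2\}$ in \eqref{equ:seq} and the half-integer curvature indices $\lceil k/2\rceil$. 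A secondary point is the deterministic analog of Lemma~\ref{LBFGS-matrix}, i.e.\ that the curvature condition $s_{\lceil k/2\rceil}^Ty_{\lceil k/2\rceil}>0$ and the eigenvalue bounds \eqref{proof:H_kbounds} persist with $\nabla f$ in place of $\nabla F(\cdot,\xi)$; this is immediate once the deterministic problem is viewed as a point-mass instance of the stochastic one, so Lemma~\ref{LBFGS-matrix} applies verbatim.
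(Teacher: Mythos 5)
Your proposal is correct and follows essentially the same route as the paper: part (a) is the $\nu=0$ specialization of Lemma \ref{lemma:main-ineq} combined with Lemma \ref{LBFGS-matrix}, part (b) applies Lemma \ref{lemma:probabilistic_bound_polyak} with $\alpha_k=\lambda_{\min}\g_k\mu_k$ and $\beta_k=\tfrac{\lambda_{\min}}{2}\mbox{dist}^2(x_0,X^*)\mu_k^2\g_k$, and part (c) is the same induction whose engine is $\lambda_{\min}\g_0\mu_0\ge 1>b$ from \eqref{DLBFGScondmu0gamma0} together with an elementary bound on $1/(k+1)^{1-\e}-1/(k+2)^{1-\e}$. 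The only difference is bookkeeping of the numerical constant in $\Gamma$ (your chain matches the stated $\Gamma$ only after exploiting the extra factor $2^{b}$ in $\g_k\mu_k(k+1)$ for large $k$, a slack that the paper's own computation shares), which does not affect the substance.
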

\begin{proof}
\noindent (a)\  The conditions of Lemma \ref{LBFGS-matrix} are met indicating that Assumption \ref{LBFGS-matrix} holds. Assumption \ref{assum:main} is clearly met with $\nu=0$ as the problem is deterministic. Therefore, all of the conditions of Lemma \ref{lemma:main-ineq} are satisfied and thus \eqref{ineq:cond-recursive-F-k} holds. Substituting $\nu=0$ in \eqref{ineq:cond-recursive-F-k} and eliminating the expectation operator yields the desired inequality.\\
\noindent (b)\  First, we show that \eqref{ineq:DLBFGSbound} holds. We can write  \begin{align*} \g_k\mu_k^{2\alpha} & =\frac{\g_0}{(2^b\mu_0)^{-2\alpha}}(k+1)^{-a}(k+\kappa)^{-2\alpha b}\leq  \frac{\g_0}{(2^b\mu_0)^{-2\alpha}}(k+1)^{-a-2\alpha b}.\end{align*}
\fy{Note that the assumption that $a > 2b\delta(n+m)$, implies that $-a-2\alpha b<0$}. Therefore, $\g_k\mu_k^{2\alpha}\to 0$ showing that there exists $K_0$ such that for any $k\geq K_0$, \eqref{ineq:DLBFGSbound} holds. We apply Lemma \ref{lemma:probabilistic_bound_polyak} to the inequality \eqref{ineq:DLBFGSbound} by setting 
\[\alpha_k:=\lambda_{\min}\g_0\mu_0, \quad \beta_k:=\frac{\lambda_{\min}\mbox{dist}^2(x_0,X^*)}{2}\mu_k^2\g_k, \quad v_k:=f_k(x_k)-f^*.\] 
\fy{From} $a+b\leq 1$, we have $\sum_{k=0}^\infty \alpha_k =\infty$. Also, $a+2b >1$ indicates that $\sum_{k=0}^\infty \beta_k <\infty$. Since all conditions of Lemma \ref{lemma:probabilistic_bound_polyak} are met, we have $f_k(x_k)\to f^*$. Recalling Definition \ref{def:regularizedF}, this implies that $f(x_k)\to f^*$.\\
\noindent (c)\ First, we show that by the given update rules for $\g_k$ and $\mu_k$, relation \eqref{ineq:DLBFGSbound} holds. Note that \fy{$\alpha=-\delta(n+m)$}. Therefore, we can write \fy{
\begin{align}\label{RDBFGSeps}\g_k\mu_k^{2\alpha}&=\frac{\g_0(k+\kappa)^{2(m+n)\delta b}}{\left(\mu_02^b\right)^{2(m+n)\delta }(k+1)^a}\leq \frac{\g_0(k+2)^{2(m+n)\delta b}}{\left(\mu_02^b\right)^{2(m+n)\delta  }(k+1)^a}\notag \\&= \frac{\g_0(1+\frac{1}{k+1})^{2(m+n)\delta b}}{\left(\mu_02^b\right)^{2(m+n)\delta}(k+1)^{a-2(m+n)\delta b}}.\end{align}
Using the condition on $\delta$, we have $a-2(m+n)b\delta= \e-2(1-\e)\delta(m+n)>0.$}
Thus, relation \eqref{RDBFGSeps} indicates that there exists $K_1$ such that for any $k\geq K_1$, \eqref{ineq:DLBFGSbound} holds. Besides, since $a$ and $b$ are positive, there exits $K_2$ such that for any $k\geq K_2$, we have $(1-\lambda_{\min}\g_k\mu_k)>0$. Let us now define \fy{$K:=\max\{K_1,K_2,2m-1\}$}.
Next, we use induction on $k$ to show \eqref{rateRDLBGFS}.
 For $k=K$, it clearly holds. Let us assume \eqref{rateRDLBGFS} holds for $k>K$. Let $e_k$ denote $f_k(x_k)-f^*$. From \eqref{ineq:DLBFGSbound} and the update rules of $\g_k$ and $\mu_k$ we can write
\begin{align}\label{ineq:DLBFGSbound2}
e_k & \leq \left(1-\frac{\lambda_{\min}\g_0\mu_02^b}{k^a(k+\kappa-1)^b}\right)e_{k-1}+\frac{\lambda_{\min}\mbox{dist}^2(x_0,X^*)\g_0\mu_0^22^{2b-1}}{k^a(k+\kappa-1)^{2b}} \notag\\
& \leq  \left(1-\frac{\lambda_{\min}\g_0\mu_02^b}{k^a(k+1)^b}\right)e_{k-1}+\frac{\lambda_{\min}\mbox{dist}^2(x_0,X^*)\g_0\mu_0^22^{2b-1}}{k^{a+2b}}\notag\\
& \leq  \left(1-\frac{\lambda_{\min}\g_0\mu_0}{k}\right)e_{k-1}+\frac{\lambda_{\min}\mbox{dist}^2(x_0,X^*)\g_0\mu_0^22^{2b-1}}{k^{a+2b}},
\end{align}
where $\kappa$ is defined in \eqref{equ:seq}, and the last inequality is implied by $\frac{k^a(k+1)^b}{k^{a+b}}\leq 2^b$ for $k\geq 1$. Note that since $k\geq K_2$, the term $ \left(1-\frac{\lambda_{\min}\g_0\mu_0}{k^{a+b}}\right)$ in \eqref{ineq:DLBFGSbound2} is nonnegative. Therefore, we can replace $e_{k-1}$ by its upper bound \fys{$\frac{\Gamma}{k^b}$} in \eqref{ineq:DLBFGSbound2}. Doing so  and noticing that $a+b=1$, we obtain
\begin{align}\label{ineq:DLBFGSbound3}
e_k & \leq   \left(1-\frac{C_1}{k}\right)\frac{\Gamma}{k^b}+\frac{C_2}{k^{b+1}},
\end{align}
where we \fys{define} $C_1\fys{\triangleq}\lambda_{\min}\g_0\mu_0$ and  $ C_2\fys{\triangleq} \lambda_{\min}\mbox{dist}^2(x_0,X^*)\g_0\mu_0^22^{2b-1}.$ Using \eqref{ineq:DLBFGSbound3}, to show that $e_{k}\leq \frac{\Gamma}{(k+1)^{1-a}}$, it is enough to show that 
\[\Gamma\left(\frac{1}{k^b}-\frac{1}{(k+1)^b}\right)\leq \frac{C_1\Gamma -C_2}{k^{b+1}}.\]
Rearranging the terms, we need to verify that $\Gamma \geq \frac{C_2}{C_1-C_3}$ and $C_3<C_1$, where \fys{$C_3$ is an upper bound on} $\sup_{k\geq1}\bigg\{k^{b+1}\left(\frac{1}{k^b}-\frac{1}{(k+1)^b}\right)\bigg\}$.
 We claim that \fys{$C_3:=b$ is a feasible choice}. To prove this, we need to show that $k^{b+1}\left(\frac{1}{k^b}-\frac{1}{(k+1)^b}\right)\leq b$, \fys{or equivalently,}
\[\left(1-\frac{1}{k+1}\right)^b\geq 1-\frac{b}{k}, \quad \hbox{for all } k\geq 1.\]
Consider the function $g(x):=(1-\frac{1}{1+x})^b+\frac{b}{x}-1$ for $x\geq 1$. we have 
$$g'(x)=\frac{b}{(1+x)^2}\left(1-\frac{1}{1+x}\right)^{1-b}-\frac{b}{x^2}=\frac{b}{(1+x)^2}\left(\left(\frac{x+1}{x}\right)^{1-b}-\left(\frac{x+1}{x}\right)^{2}\right)\leq 0,$$
due to $0<b<1$. Hence, $g$ is non-increasing implying that it suffices to show $g(1)\geq0$, i.e., $2^b(1-b)\leq 1$.  Let us define $h(x):=2^x(1-x)$ for $0<x<1$. We have $h'(x)=2^x\left(\ln(2)(1-x)-1\right)$. This indicates that $h'(x)<0$ over $x\in(0,1)$, implying that $h(b)\leq h(0)=1$. Hence, we conclude that $C_3:=b$ \fys{is a feasible choice}. To show that $C_3<C_1$ holds, we need to verify that $C_1>b$. This is true due to \eqref{DLBFGScondmu0gamma0}. To complete the proof we need to show $\Gamma \geq \frac{C_2}{C_1-b}$. This holds from the definition of $\Gamma$. 
\end{proof}
\section{Numerical experiments}\label{sec:num}
In this section, we present the implementation results of \fy{Algorithm \ref{algorithm:IR-S-BFGS}} \vus{on} a \fys{classification} application. The Reuters Corpus Volume I (RCV1) data set \cite{lewis2004rcv1} is a collection of news-wire stories produced by Reuters. After the tokenization process, each article is converted to a sparse binary vector, in that 1 denotes the existence and 0 denotes nonexistence of a token in the corresponding article. We consider a subset of the data with 
\fy{$N=100,000$} articles and $n=138,921$ tokens. The articles are categorized into \fys{different} hierarchical groups. Here we focus our attention on the binary classification of the articles with respect to the \fy{Markets} class. We consider the logistic regression loss minimization problem given as follows:
\begin{align}\label{logistic}\tag{LRM}
\min_{x \in \Real^n} f(x):=\frac{1}{N}\sum_{i=1}^N\ln \left(1+\exp\left(-u_i^Txv_i\right)\right),
\end{align}
where $u_i \in \Real^n$ is the input binary vector associated with article $i$ \fys{and} $v_i \in \{-1,1\}$ \fys{denotes} the class of the $i$th article. We run three experiments. Of these, in Section \ref{sec:num-reg}, we compare the performance of Algorithm \ref{algorithm:IR-S-BFGS} \fys{(with $\tau=1$)} with that of standard SQN methods. \fys{In Sections \ref{sec:num-SAGA} and \ref{sec:num-IAG}, we provide comparisons of Algorithm \ref{algorithm:IR-S-BFGS} with SAGA~\cite{Saga14} and with IAG~\cite{Iag17} applied to regularized problems, respectively.}

\begin{table}[t]
\setlength{\tabcolsep}{0pt}
\centering{
 \begin{tabular}{m{1cm}  || c  c  c}
$\gamma_0$ & $\mu_0=1$ & $\mu_0=0.5$ & $\mu_0=0.1$ \\ \hline\\
10
&
\begin{minipage}{.3\textwidth}
\includegraphics[scale=.18, angle=0]{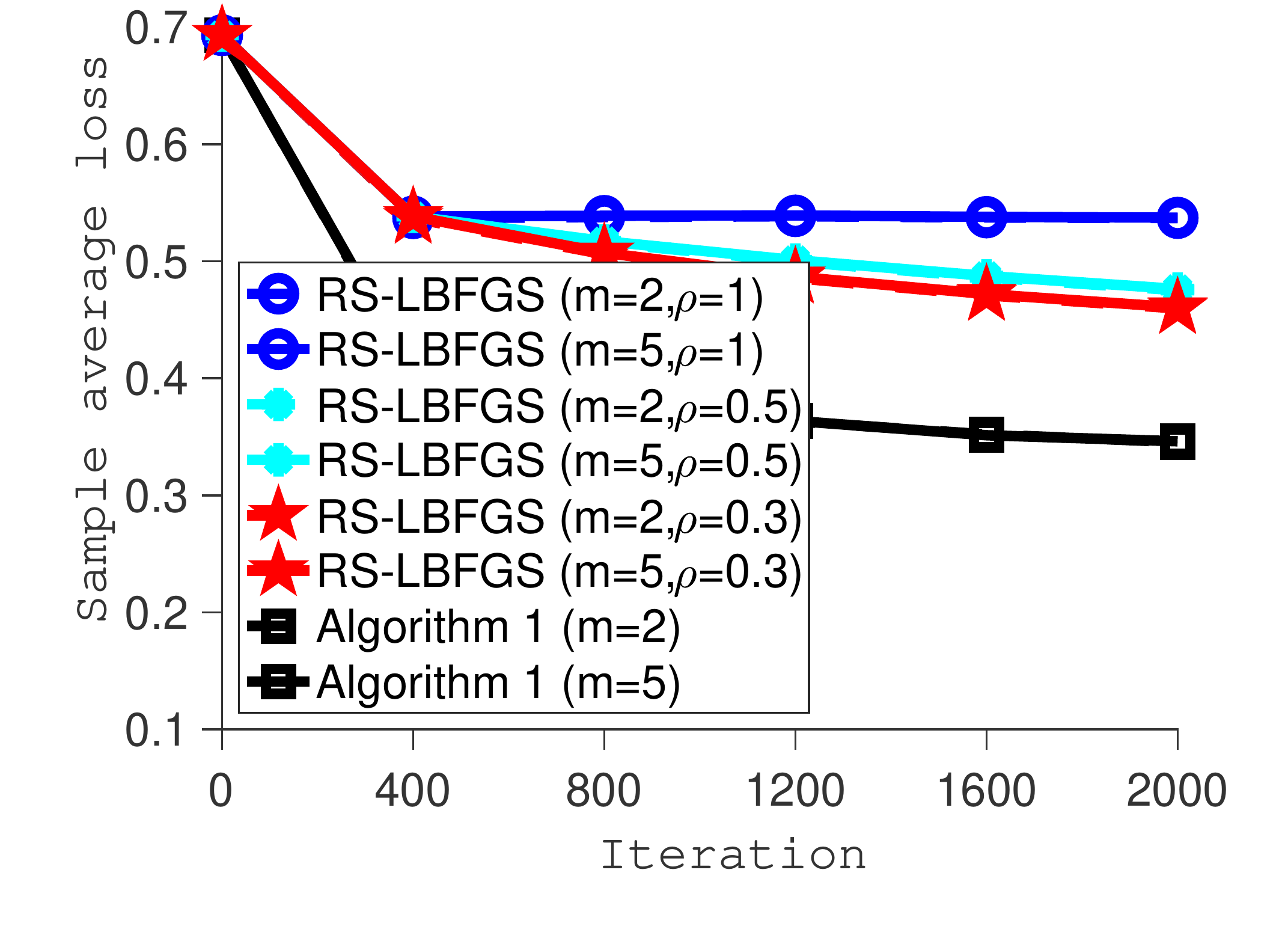}
\end{minipage}
&
\begin{minipage}{.3\textwidth}
\includegraphics[scale=.18, angle=0]{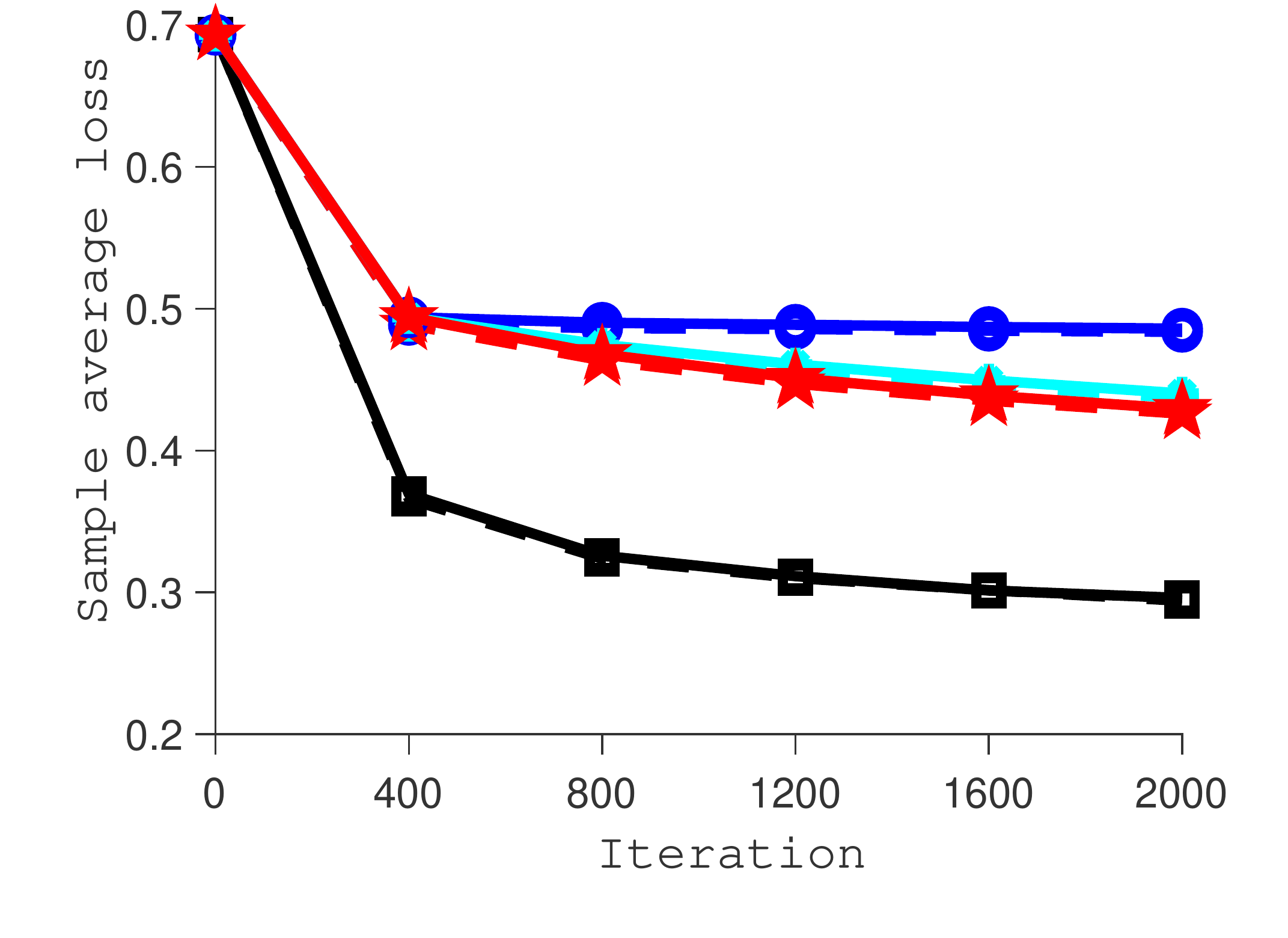}
\end{minipage}
	&
\begin{minipage}{.3\textwidth}
\includegraphics[scale=.18, angle=0]{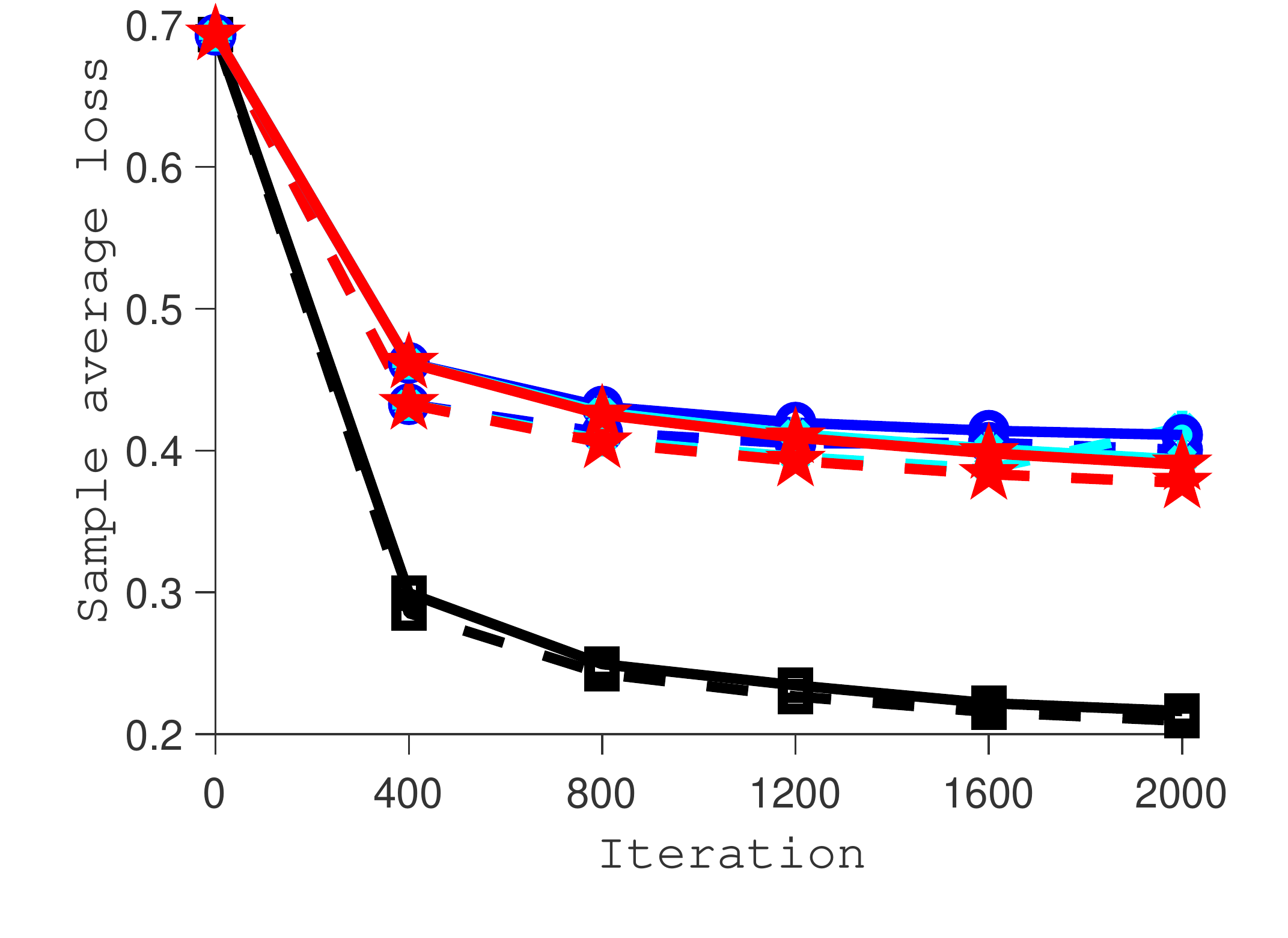}
\end{minipage}
\\
0.5
&
\begin{minipage}{.3\textwidth}
\includegraphics[scale=.18, angle=0]{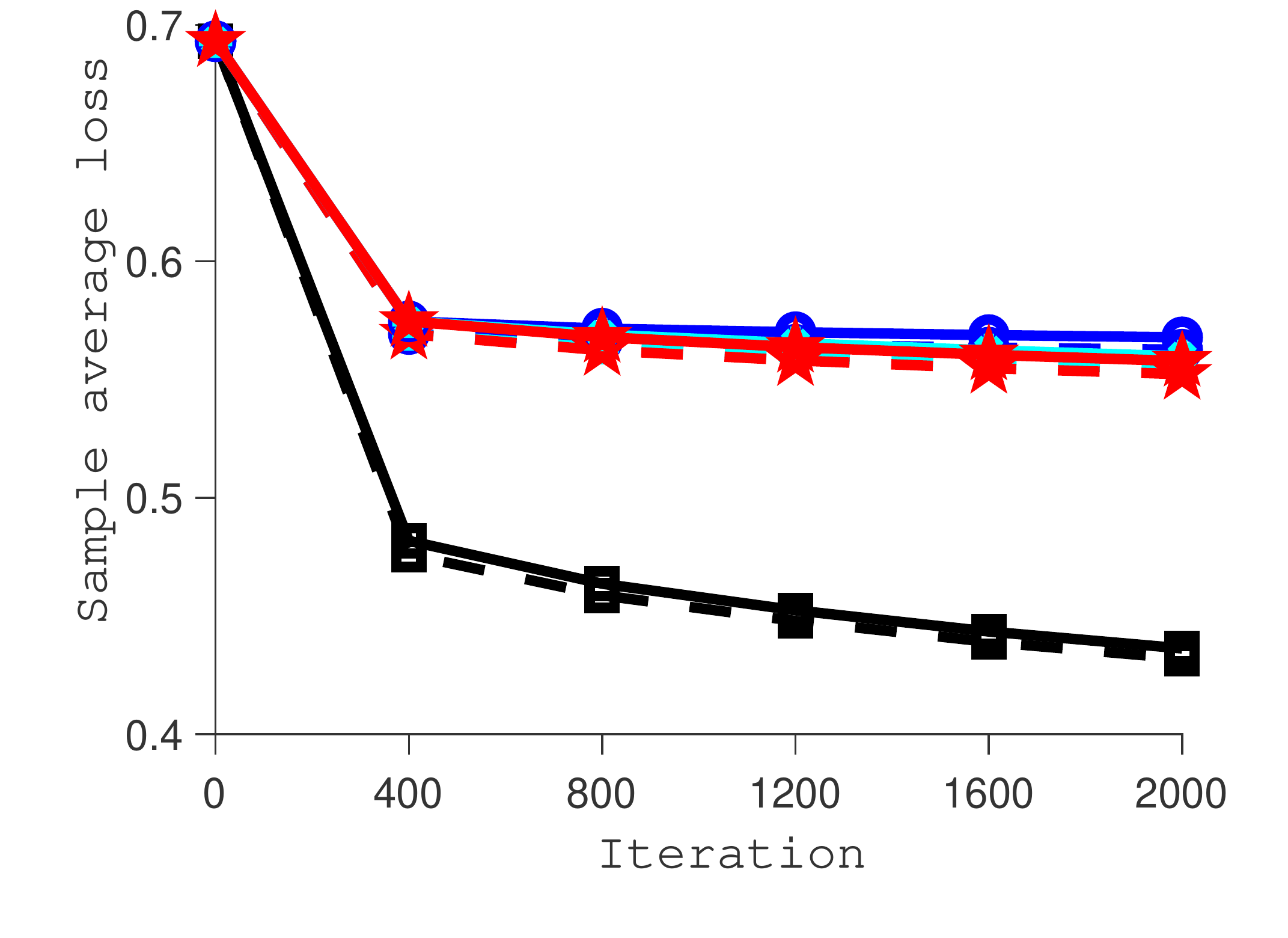}
\end{minipage}
&
\begin{minipage}{.3\textwidth}
\includegraphics[scale=.18, angle=0]{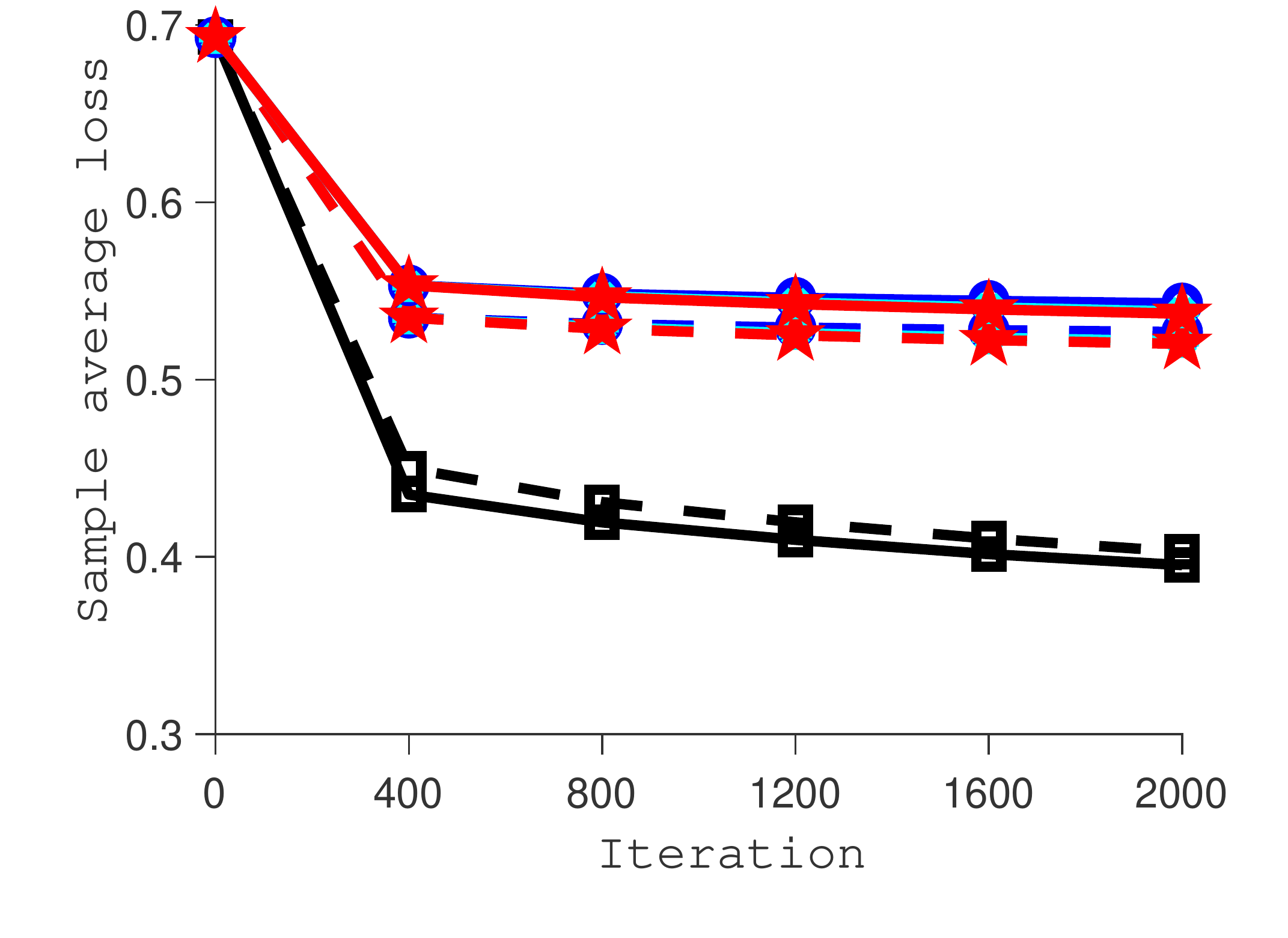}
\end{minipage}
&
\begin{minipage}{.3\textwidth}
\includegraphics[scale=.18, angle=0]{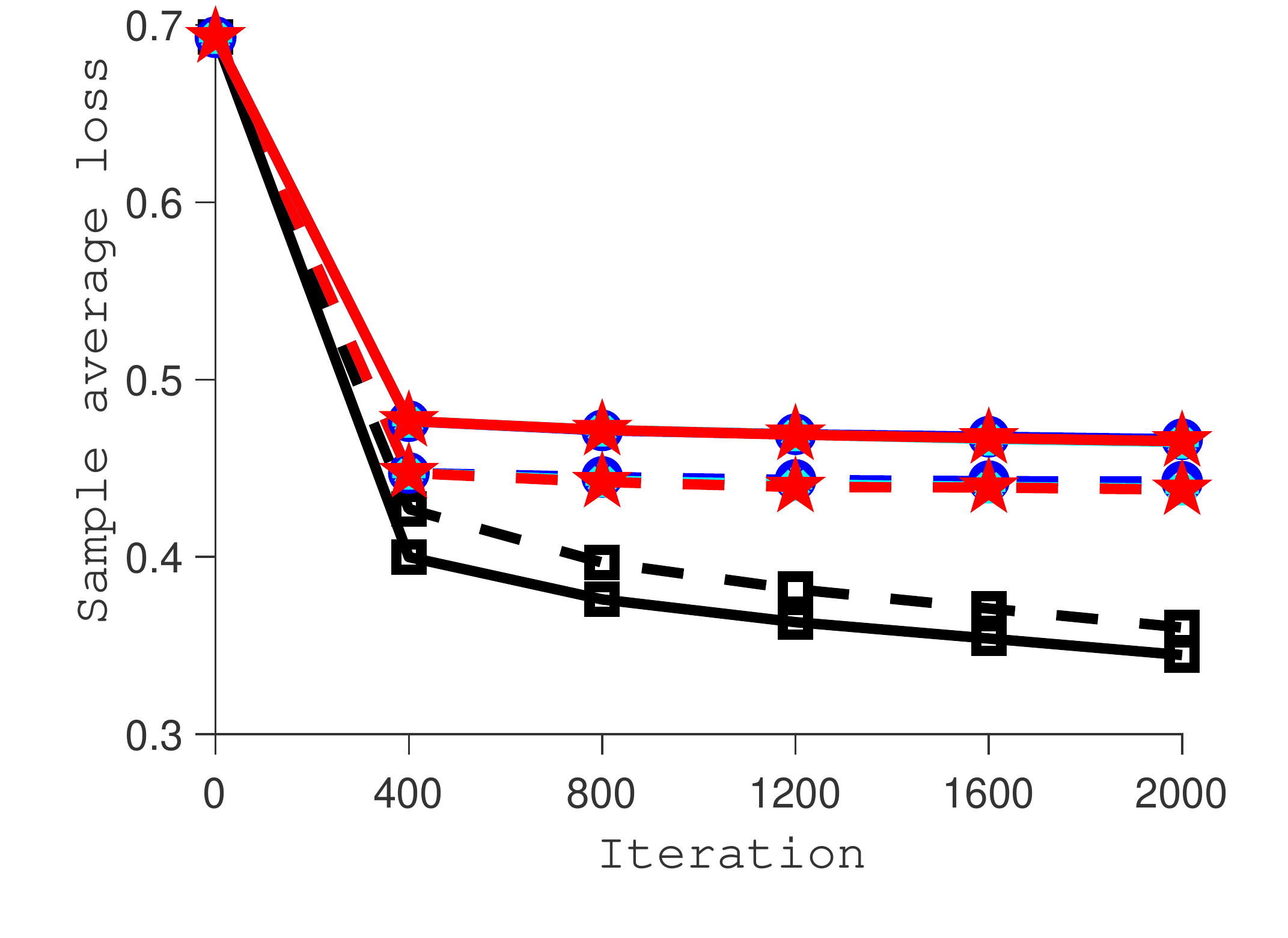}
\end{minipage}
\\
0.1
&
\begin{minipage}{.3\textwidth}
\includegraphics[scale=.18, angle=0]{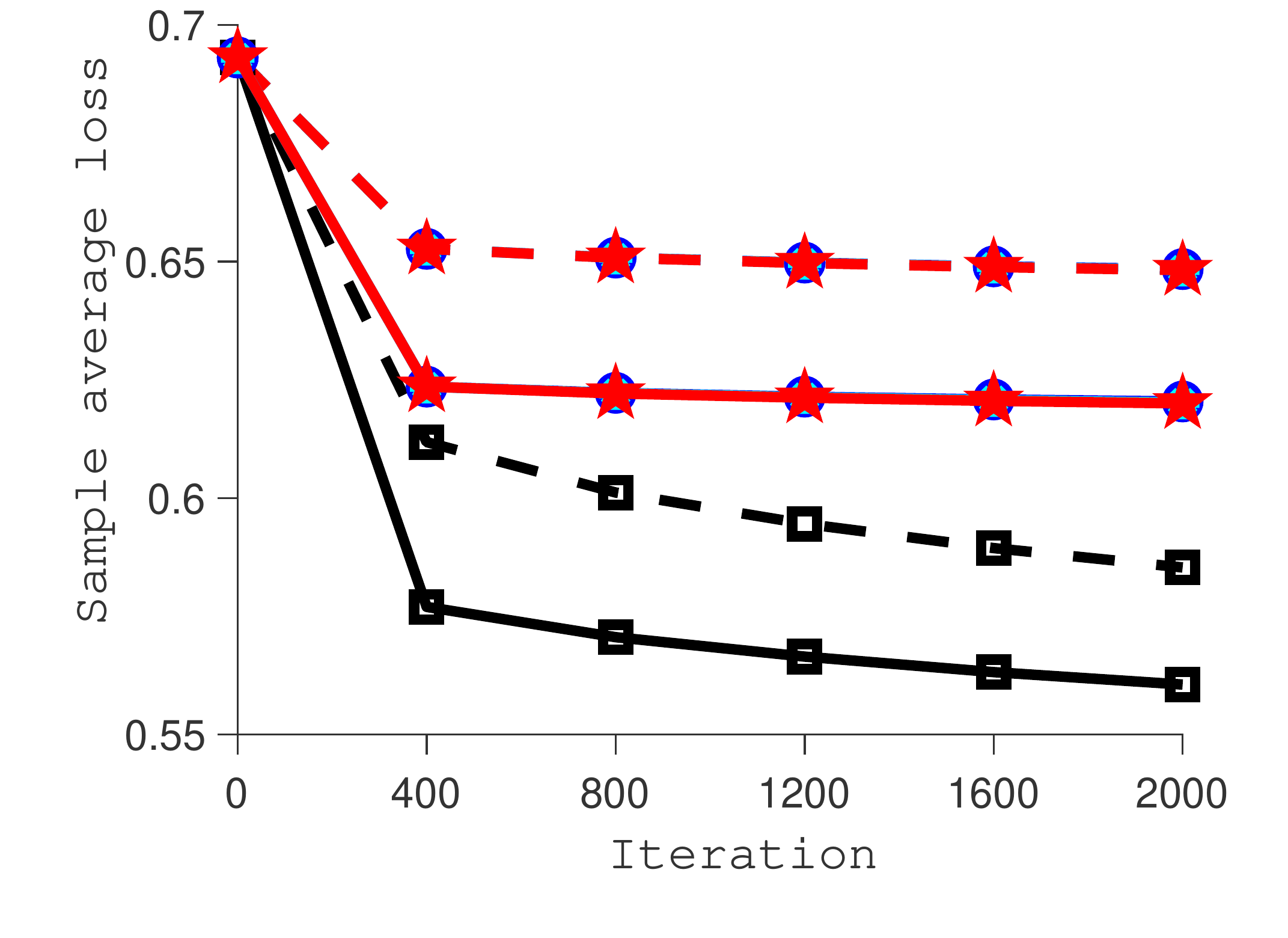}
\end{minipage}
&
\begin{minipage}{.3\textwidth}
\includegraphics[scale=.18, angle=0]{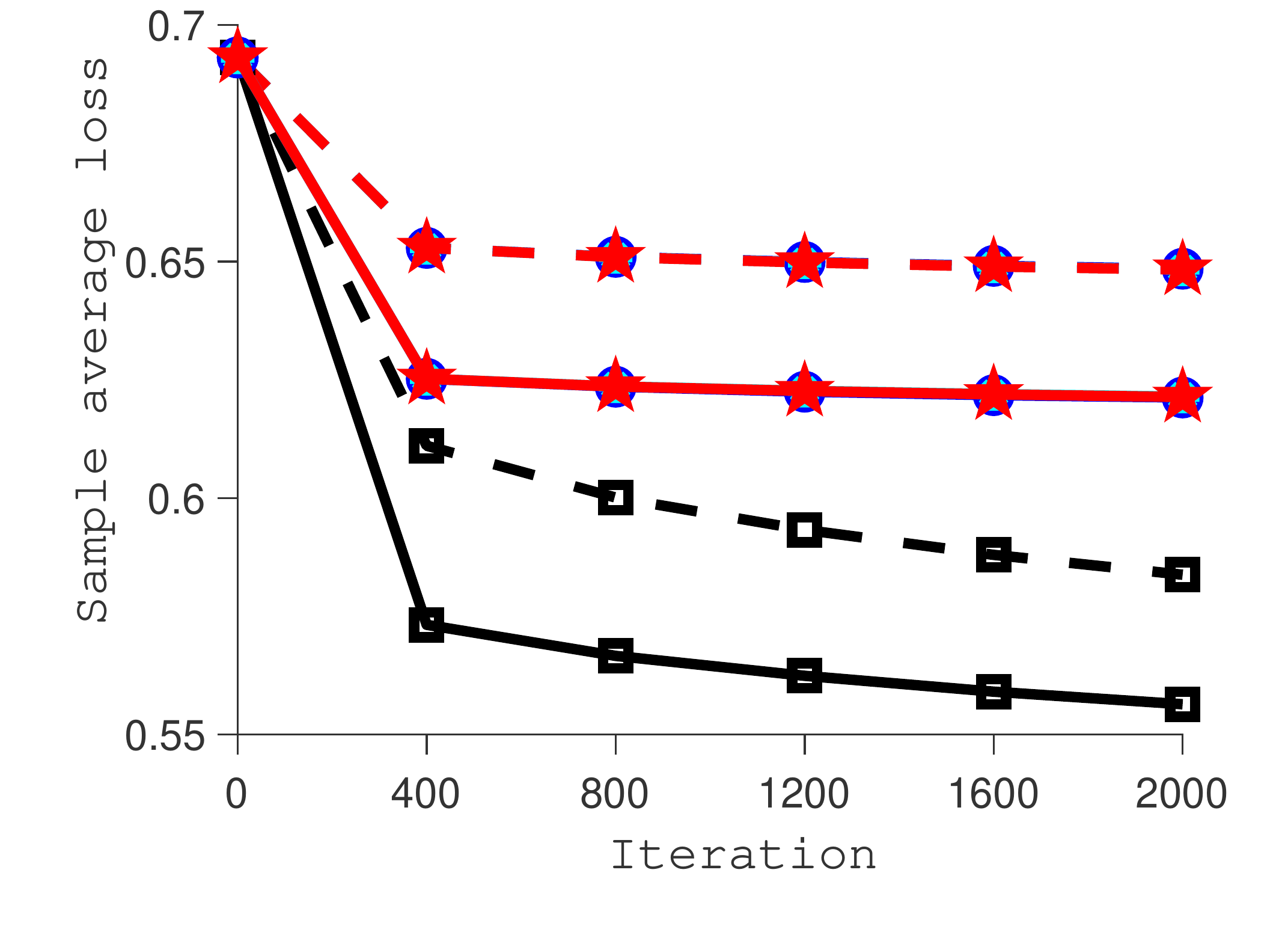}
\end{minipage}
&
\begin{minipage}{.3\textwidth}
\includegraphics[scale=.18, angle=0]{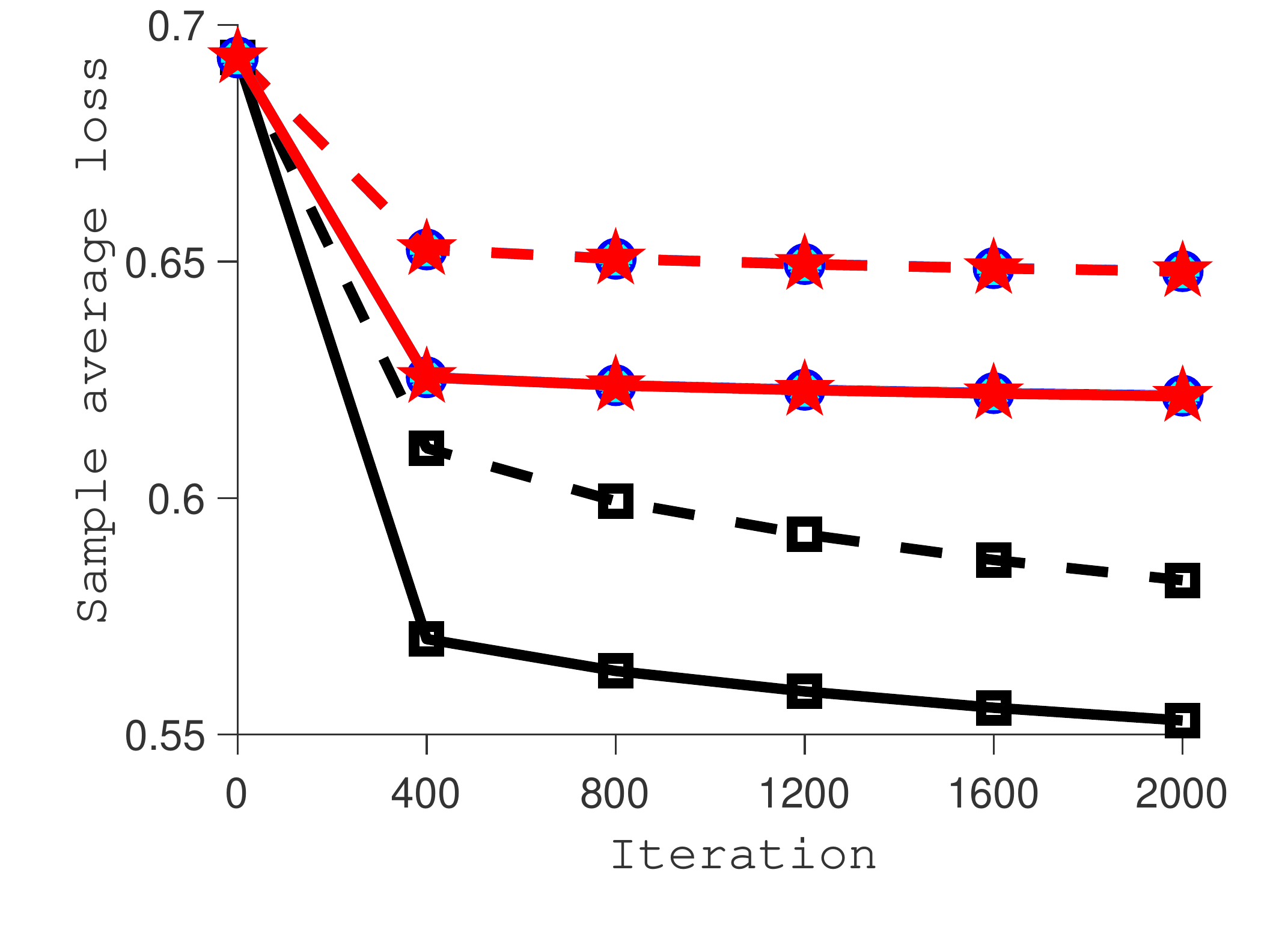}
\end{minipage}
\end{tabular}}
\captionof{figure}{\footnotesize{Algorithm \ref{algorithm:IR-S-BFGS} vs. stochastic LBFGS under constant regularization (i.e., $\rho=1$), and under piece-wise \fys{constant} regularization (i.e., $\rho=0.5, 0.3$), where $\rho$ is the decay ratio of the regularization parameter at each epoch of $400$ iterations.\\}}
\label{m2}
\end{table} 

\begin{table}[t]
\setlength{\tabcolsep}{0pt}
\centering
 \begin{tabular}{m{1cm} || c  c  c}
$N$ & initial cond. 1& initial cond. 2& initial cond. 3 \\ \hline\\
$10^3$
&
\begin{minipage}{.3\textwidth}
\includegraphics[scale=.18, angle=0]{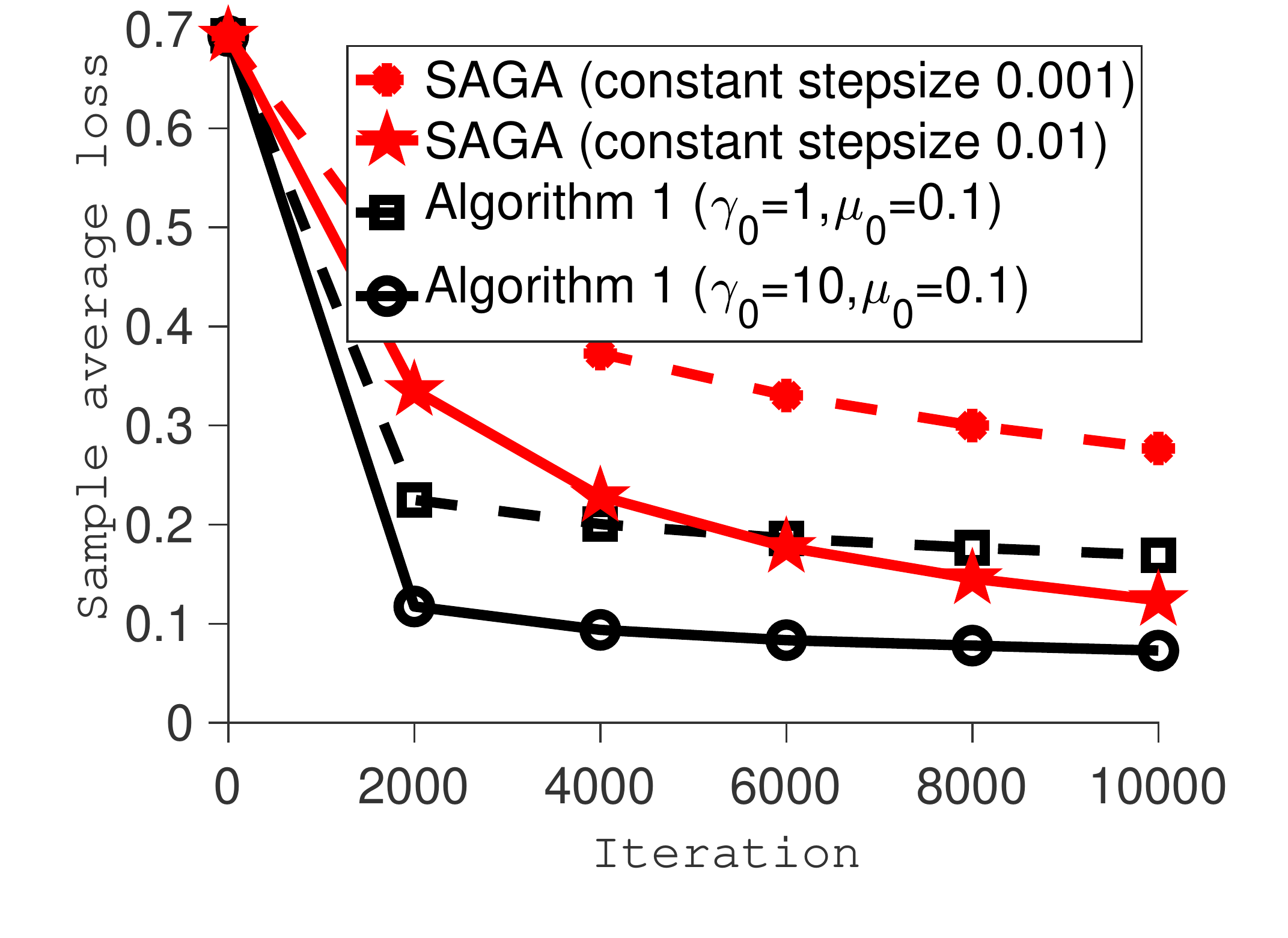}
\end{minipage}
&
\begin{minipage}{.3\textwidth}
\includegraphics[scale=.18, angle=0]{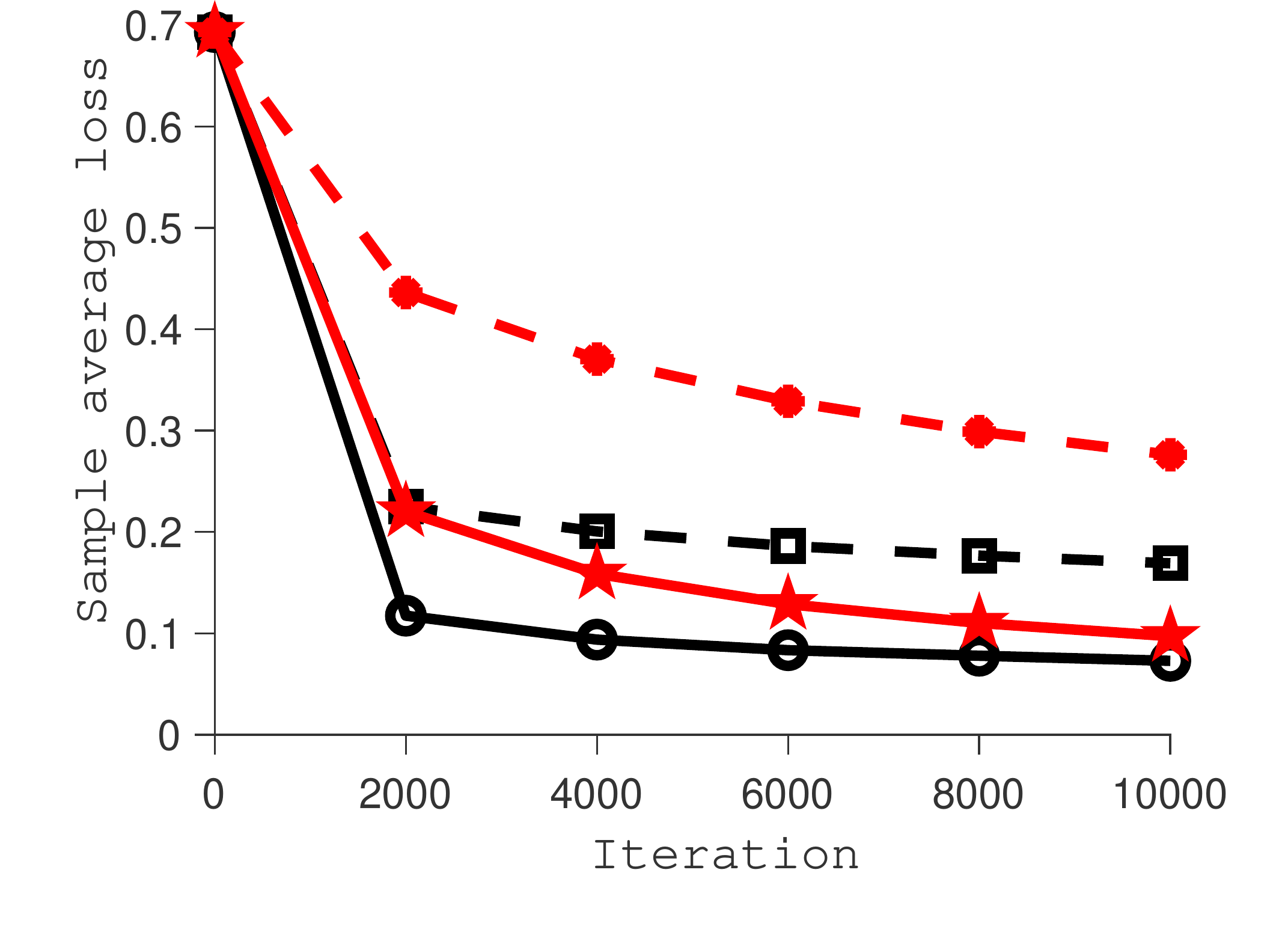}
\end{minipage}
	&
\begin{minipage}{.3\textwidth}
\includegraphics[scale=.18, angle=0]{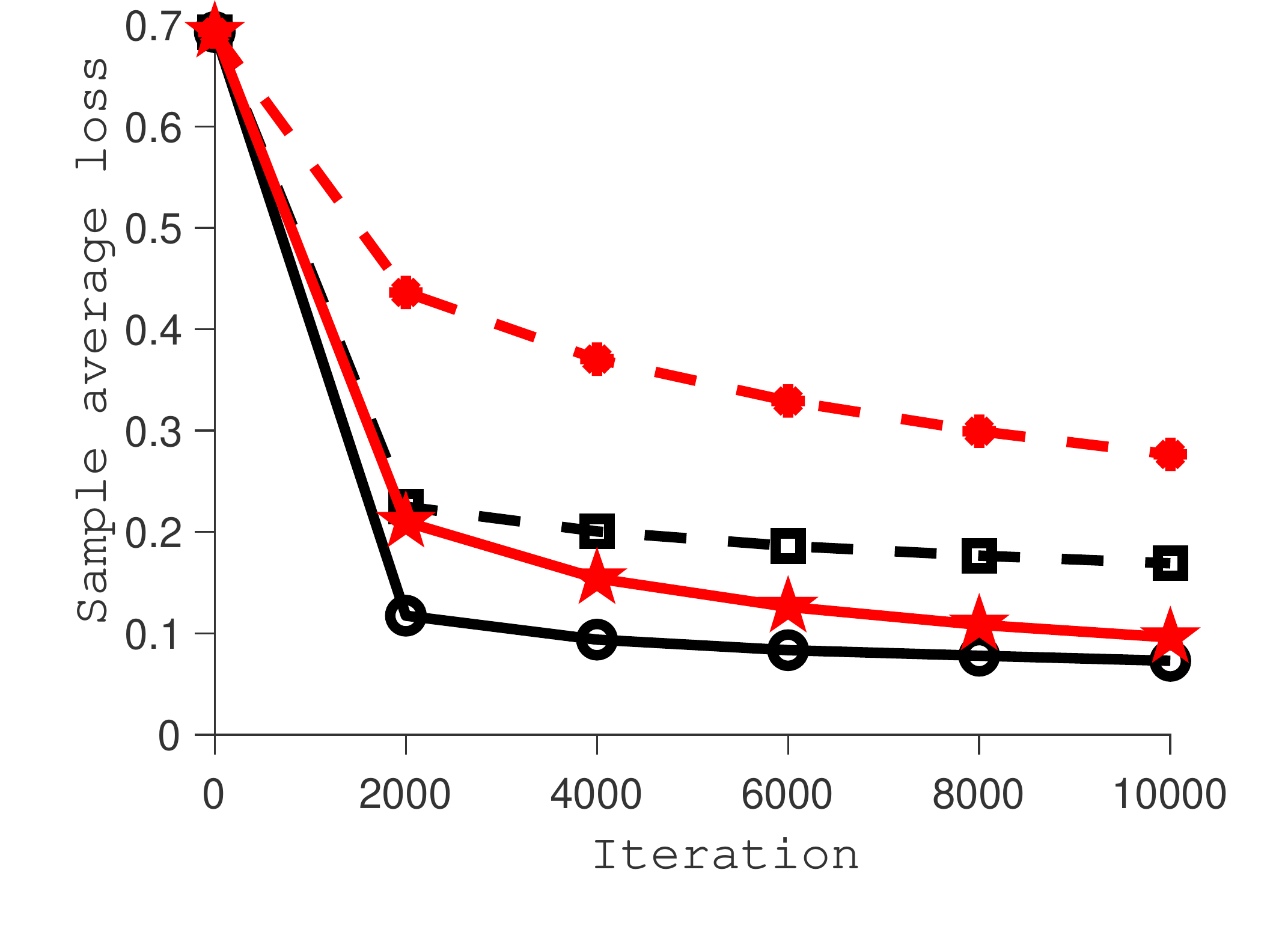}
\end{minipage}
\\
$10^4$
&
\begin{minipage}{.3\textwidth}
\includegraphics[scale=.18, angle=0]{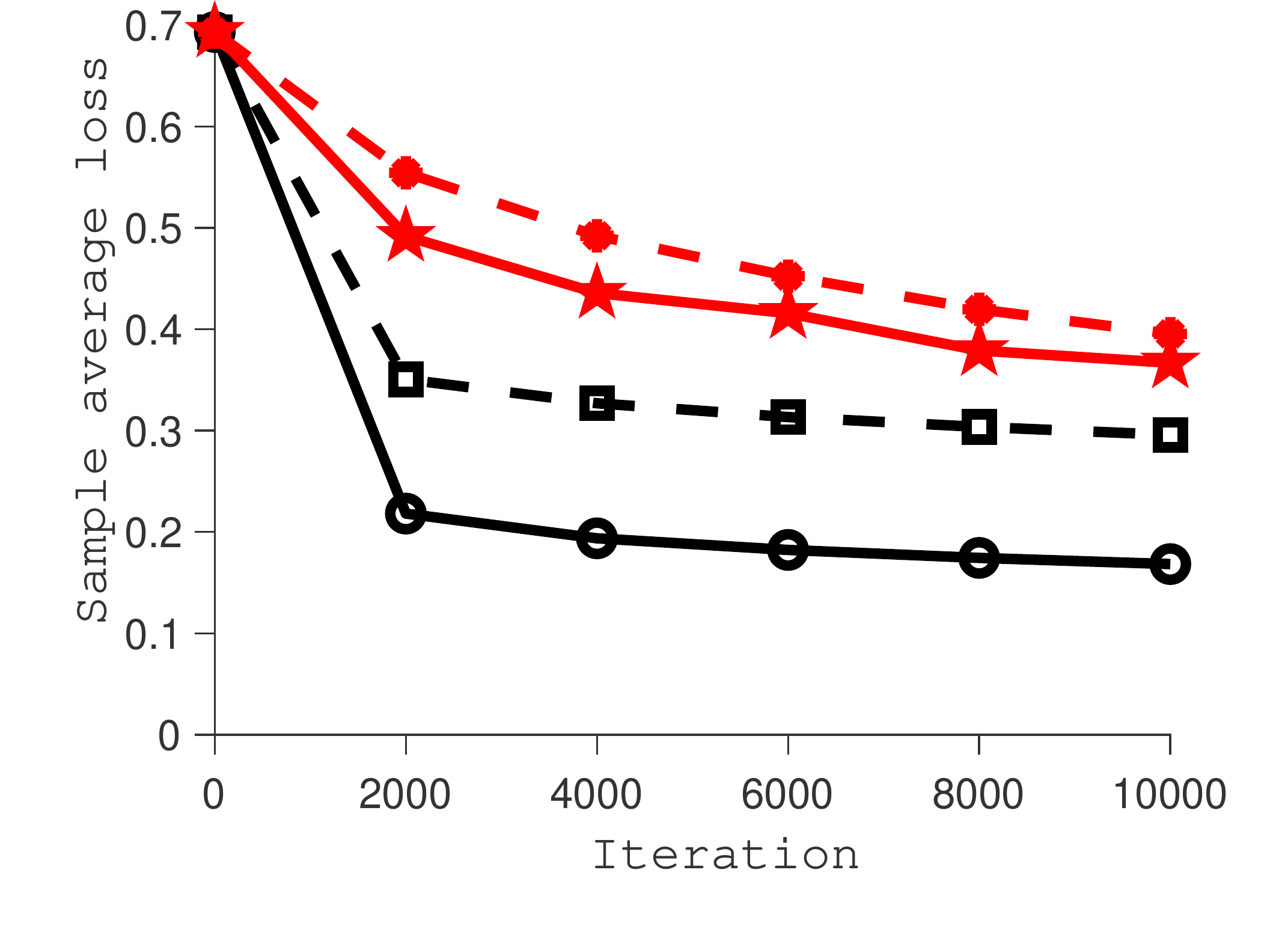}
\end{minipage}
&
\begin{minipage}{.3\textwidth}
\includegraphics[scale=.18, angle=0]{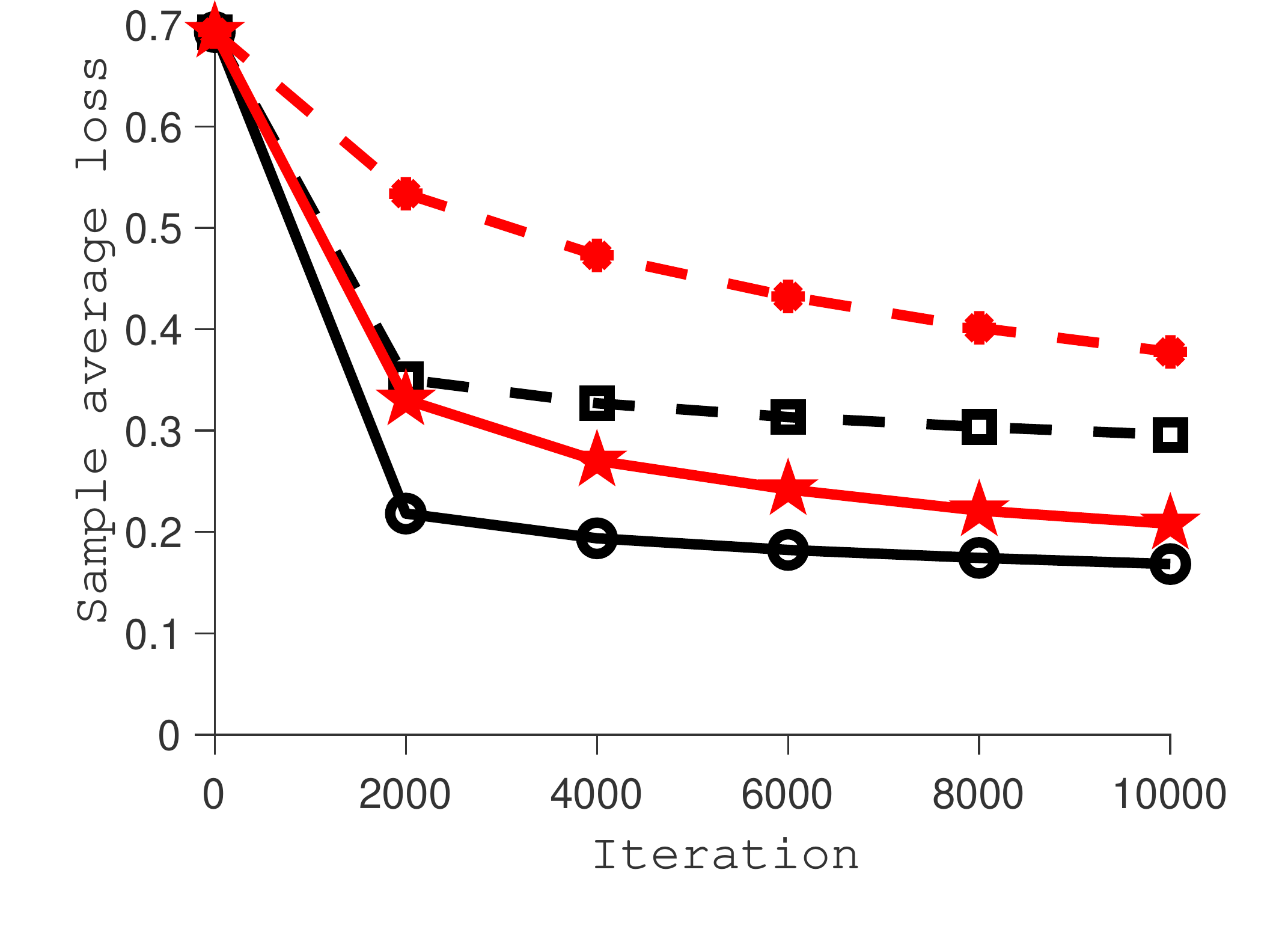}
\end{minipage}
&
\begin{minipage}{.3\textwidth}
\includegraphics[scale=.18, angle=0]{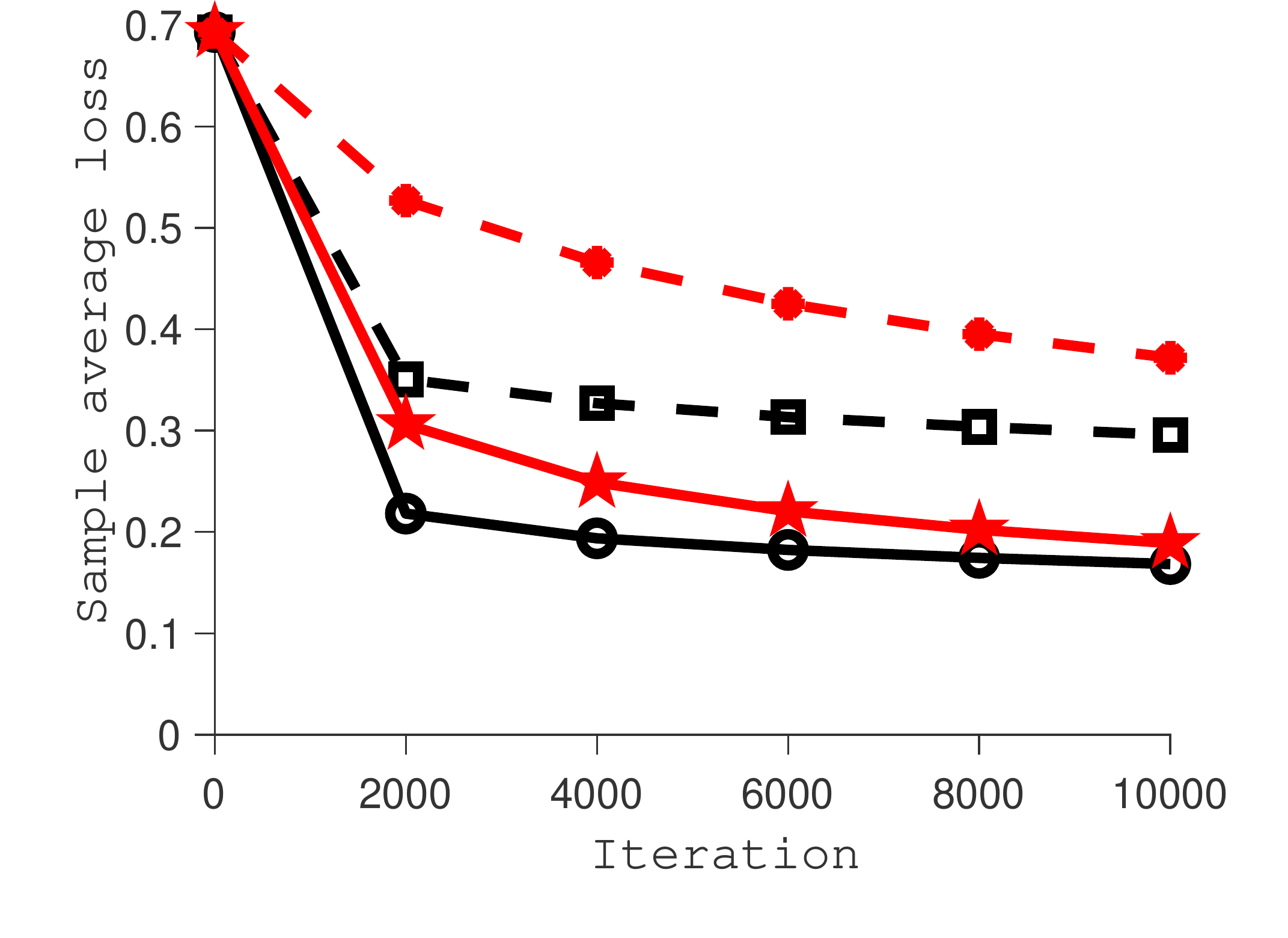}
\end{minipage}
\\
$10^5$
&
\begin{minipage}{.3\textwidth}
\includegraphics[scale=.18, angle=0]{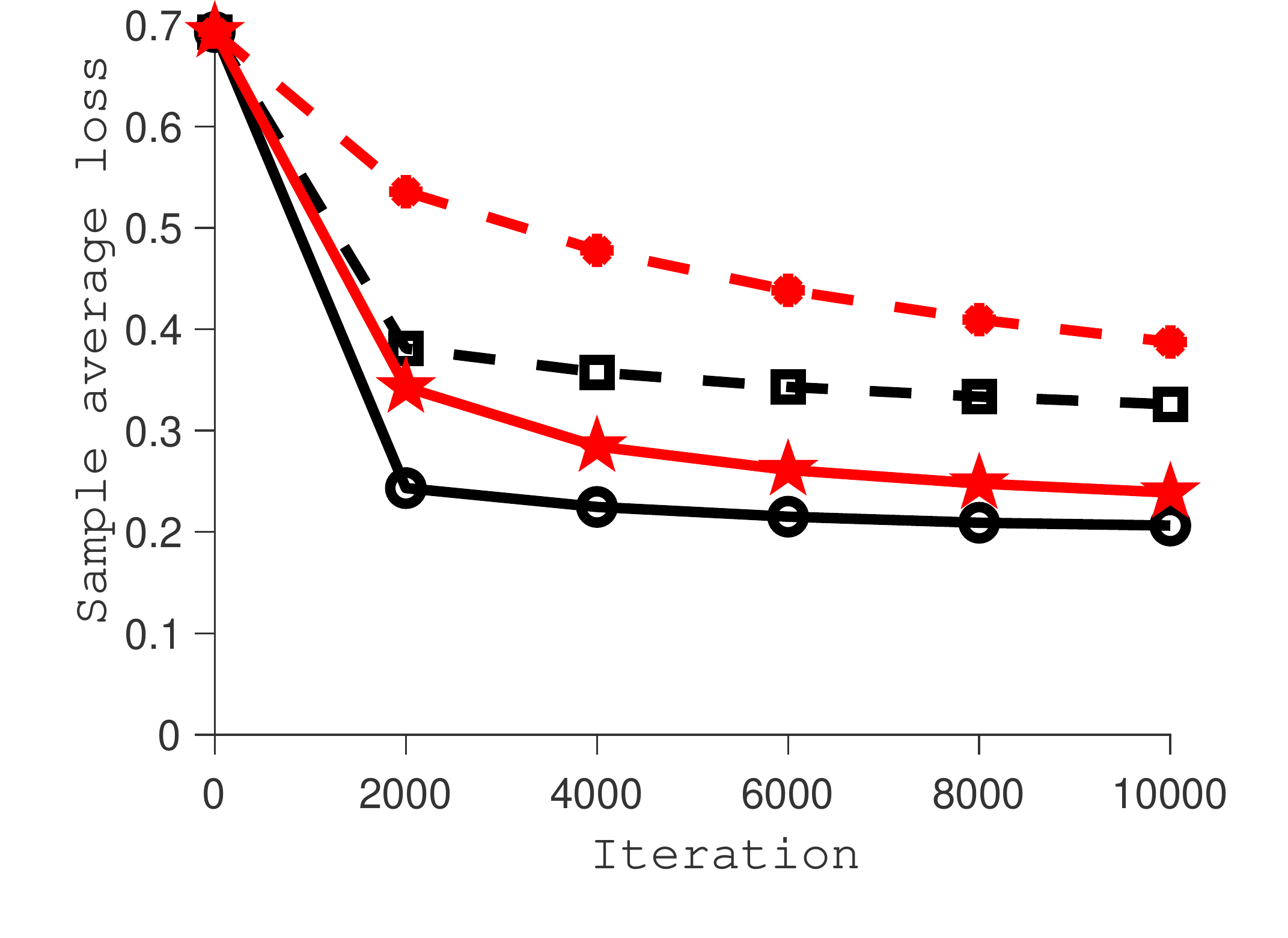}
\end{minipage}
&
\begin{minipage}{.3\textwidth}
\includegraphics[scale=.18, angle=0]{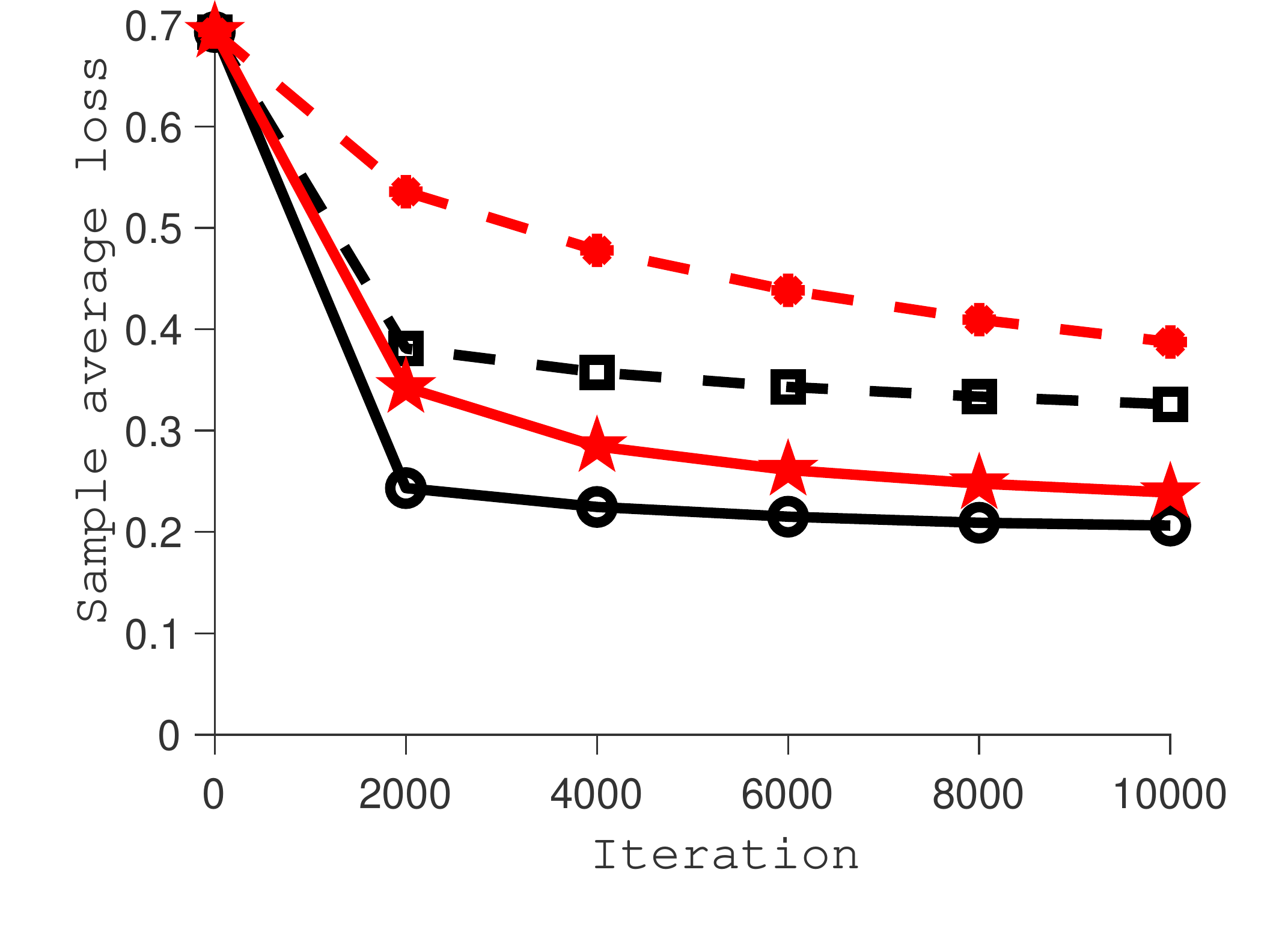}
\end{minipage}
&
\begin{minipage}{.3\textwidth}
\includegraphics[scale=.18, angle=0]{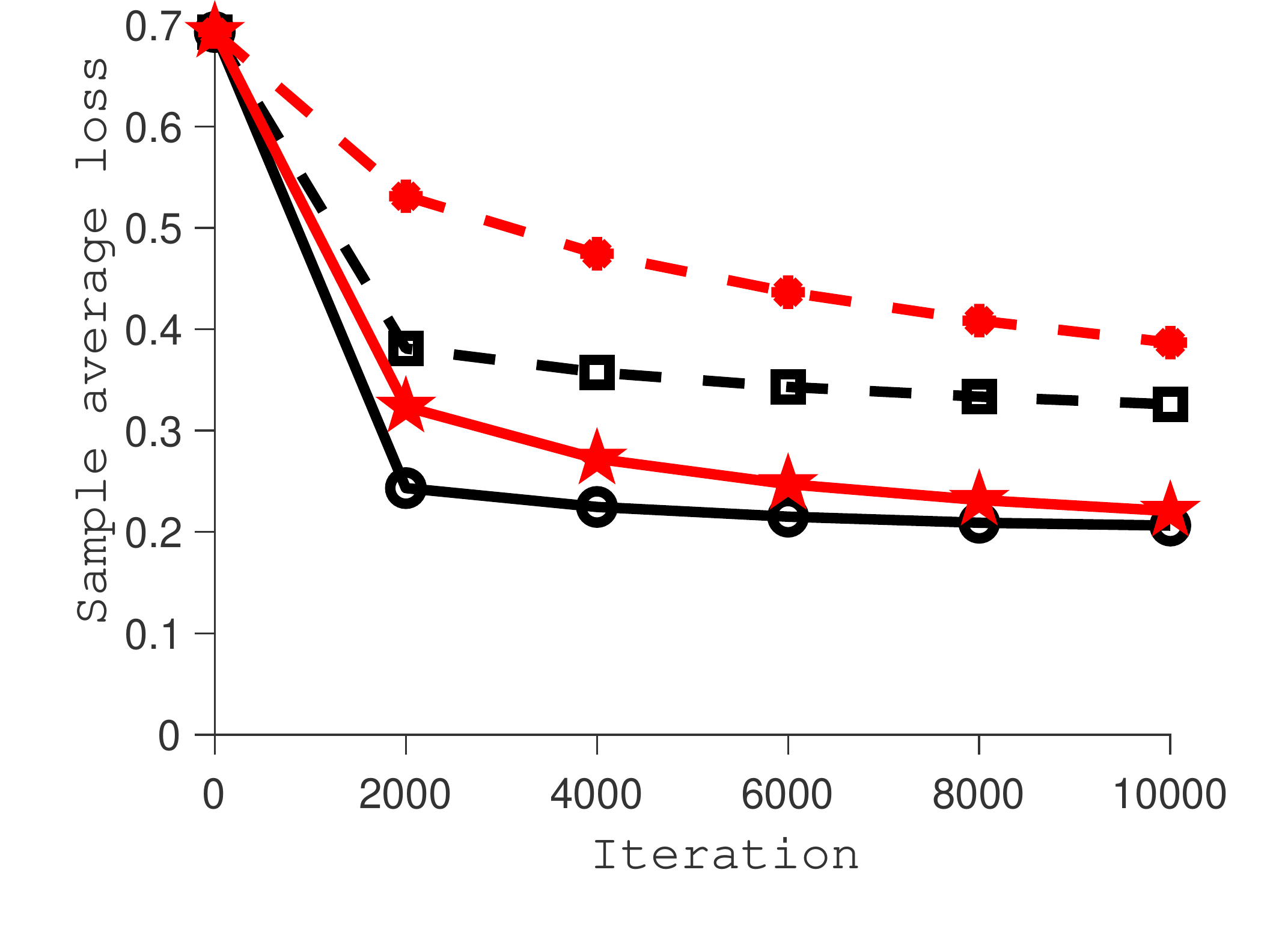}
\end{minipage}
\end{tabular}
\captionof{figure}{\footnotesize{Algorithm \ref{algorithm:IR-S-BFGS} vs. SAGA (with averaging) with different choices of constant stepsize, different sample sizes, and different initial value of the gradient of component functions.\\}}
\label{tableSaga}
\end{table}

\begin{table}[t]
\setlength{\tabcolsep}{0pt}
\centering
 \begin{tabular}{m{1cm} || c  c  c}
$N$ & $\mu_{IAG}=0.1$&$\mu_{IAG}=0.01$& $\mu_{IAG}=0.001$ \\ \hline\\
$1000$
&
\begin{minipage}{.3\textwidth}
\includegraphics[scale=.19, angle=0]{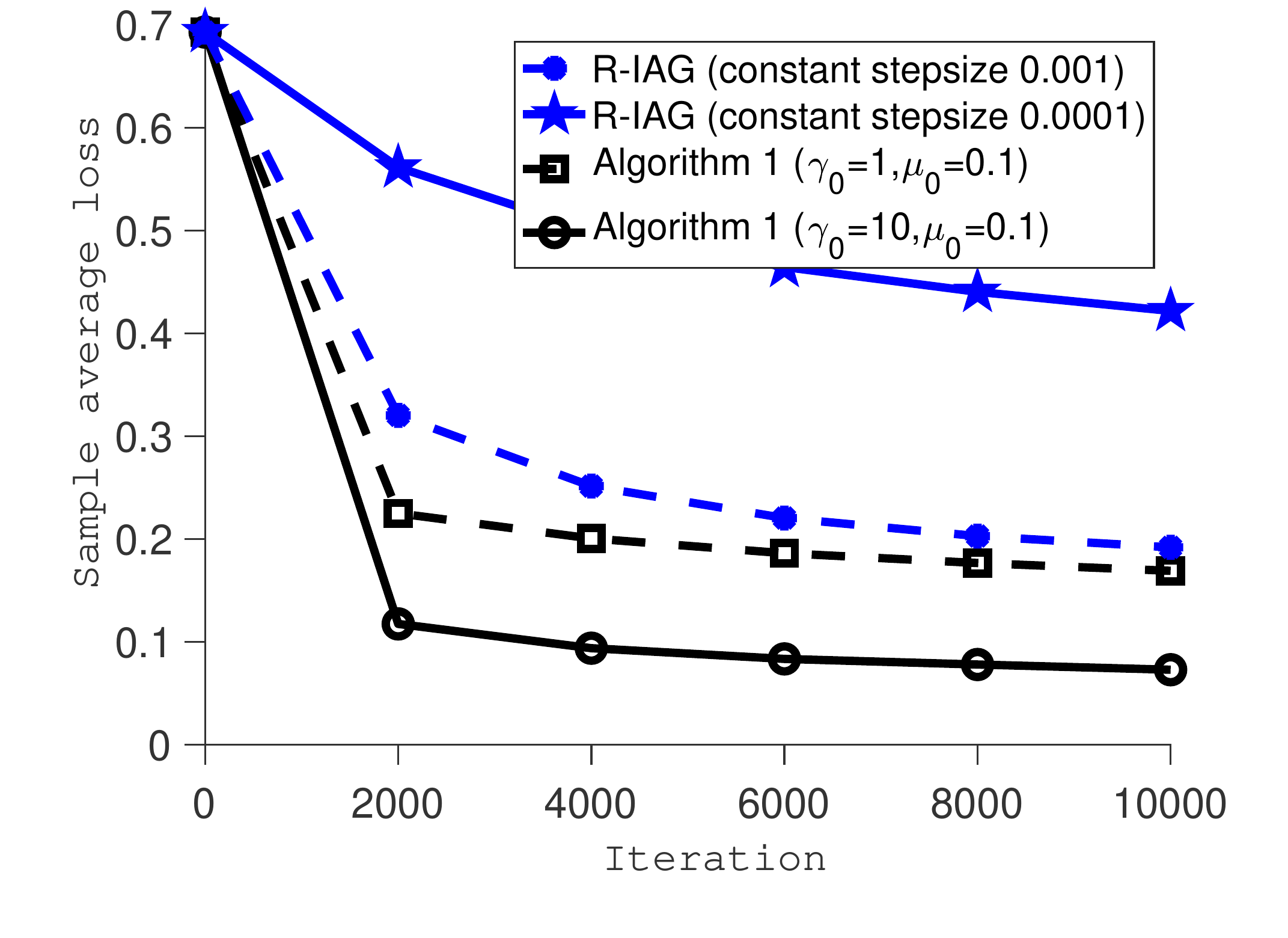}
\end{minipage}
&
\begin{minipage}{.3\textwidth}
\includegraphics[scale=.19, angle=0]{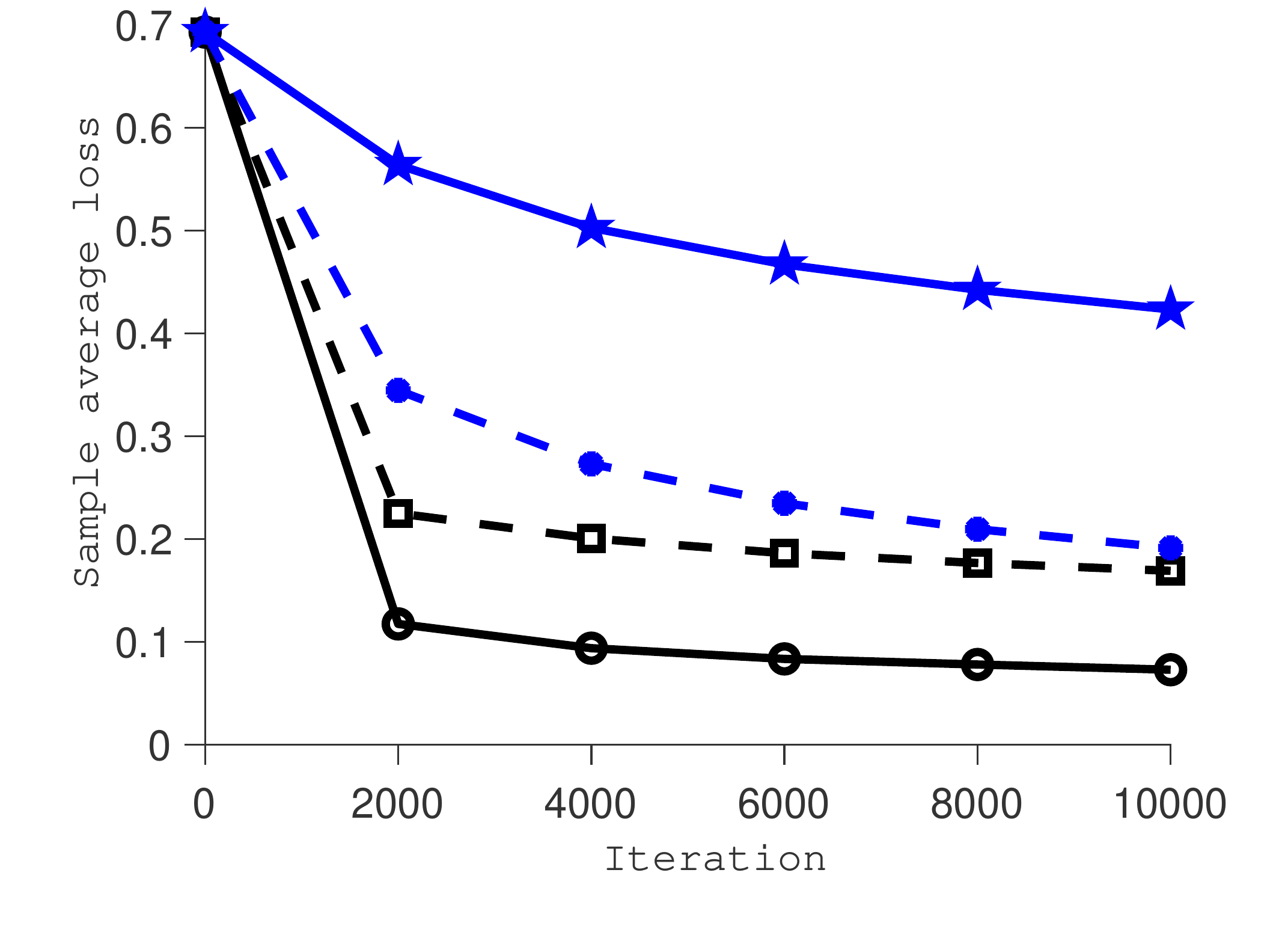}
\end{minipage}
	&
\begin{minipage}{.3\textwidth}
\includegraphics[scale=.19, angle=0]{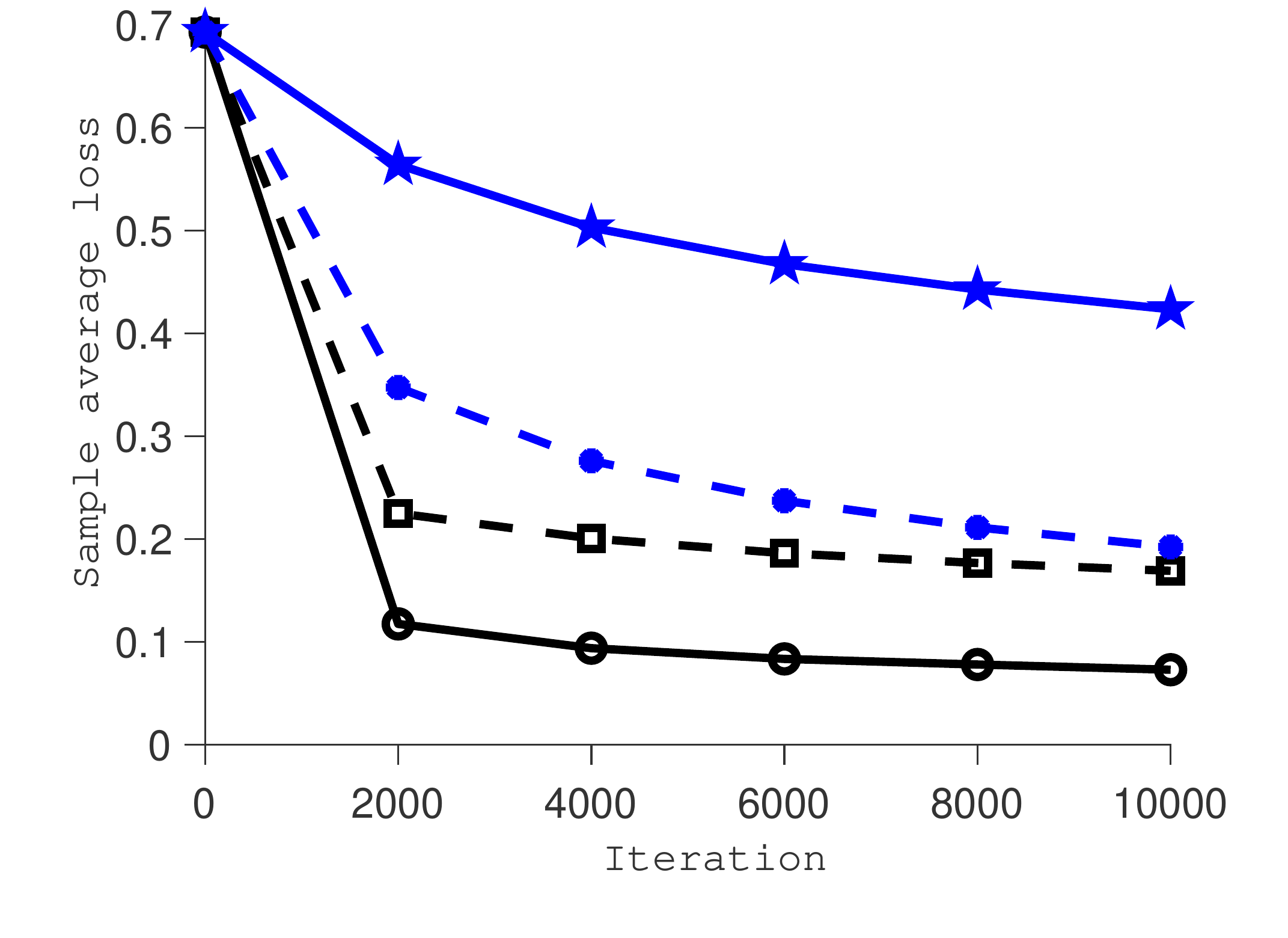}
\end{minipage}
\\
$2000$
&
\begin{minipage}{.3\textwidth}
\includegraphics[scale=.19, angle=0]{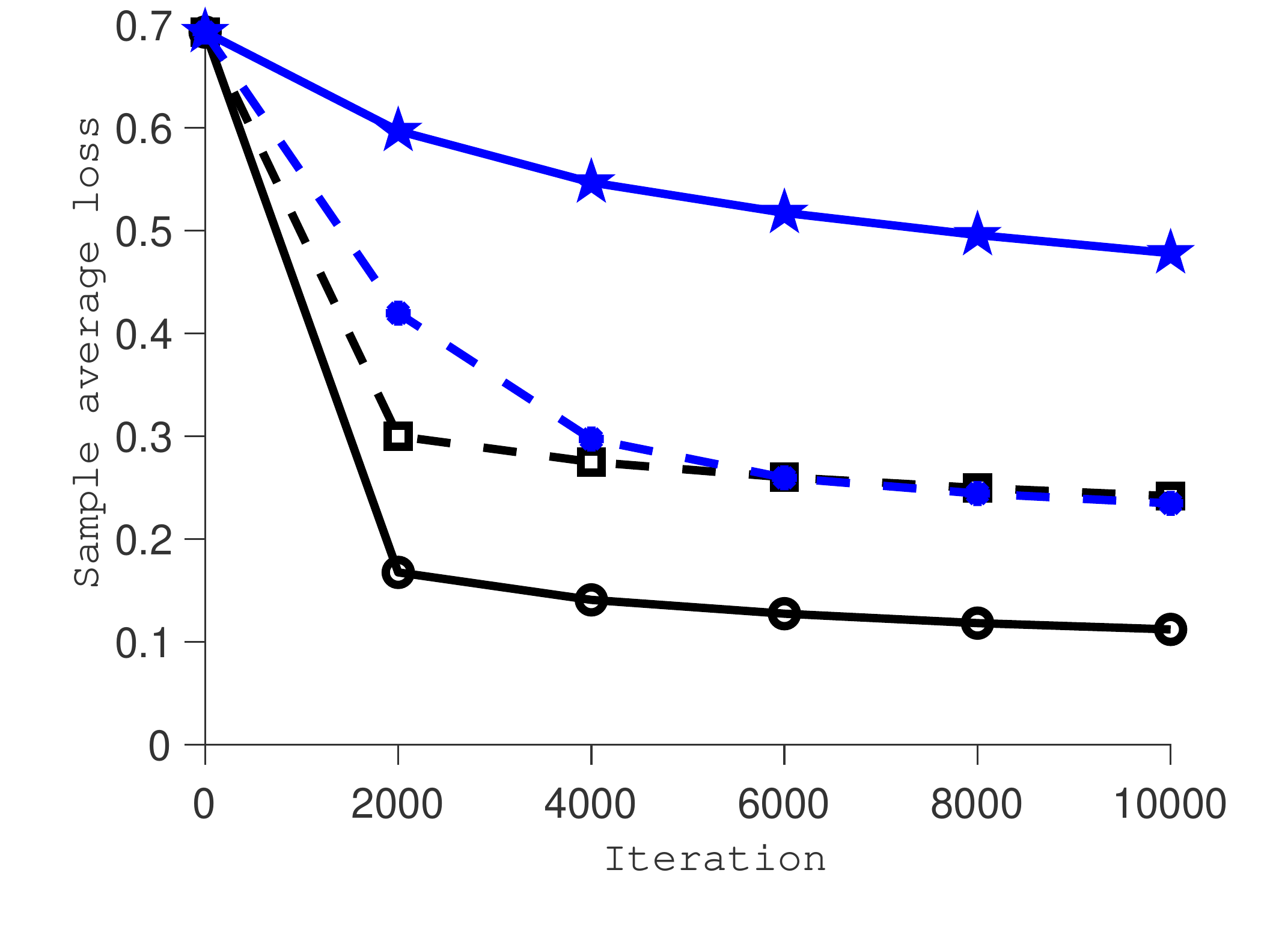}
\end{minipage}
&
\begin{minipage}{.3\textwidth}
\includegraphics[scale=.19, angle=0]{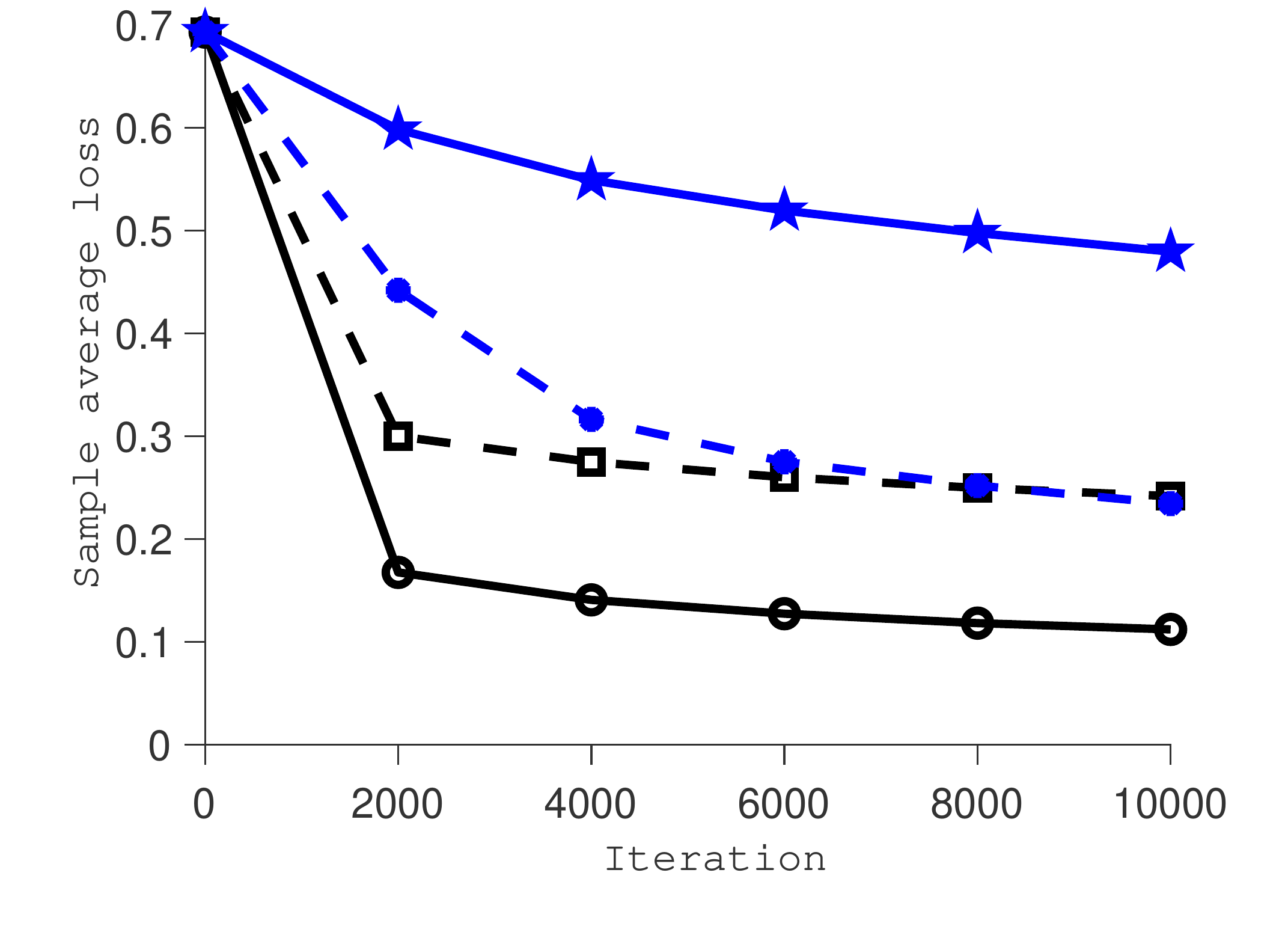}
\end{minipage}
&
\begin{minipage}{.3\textwidth}
\includegraphics[scale=.19, angle=0]{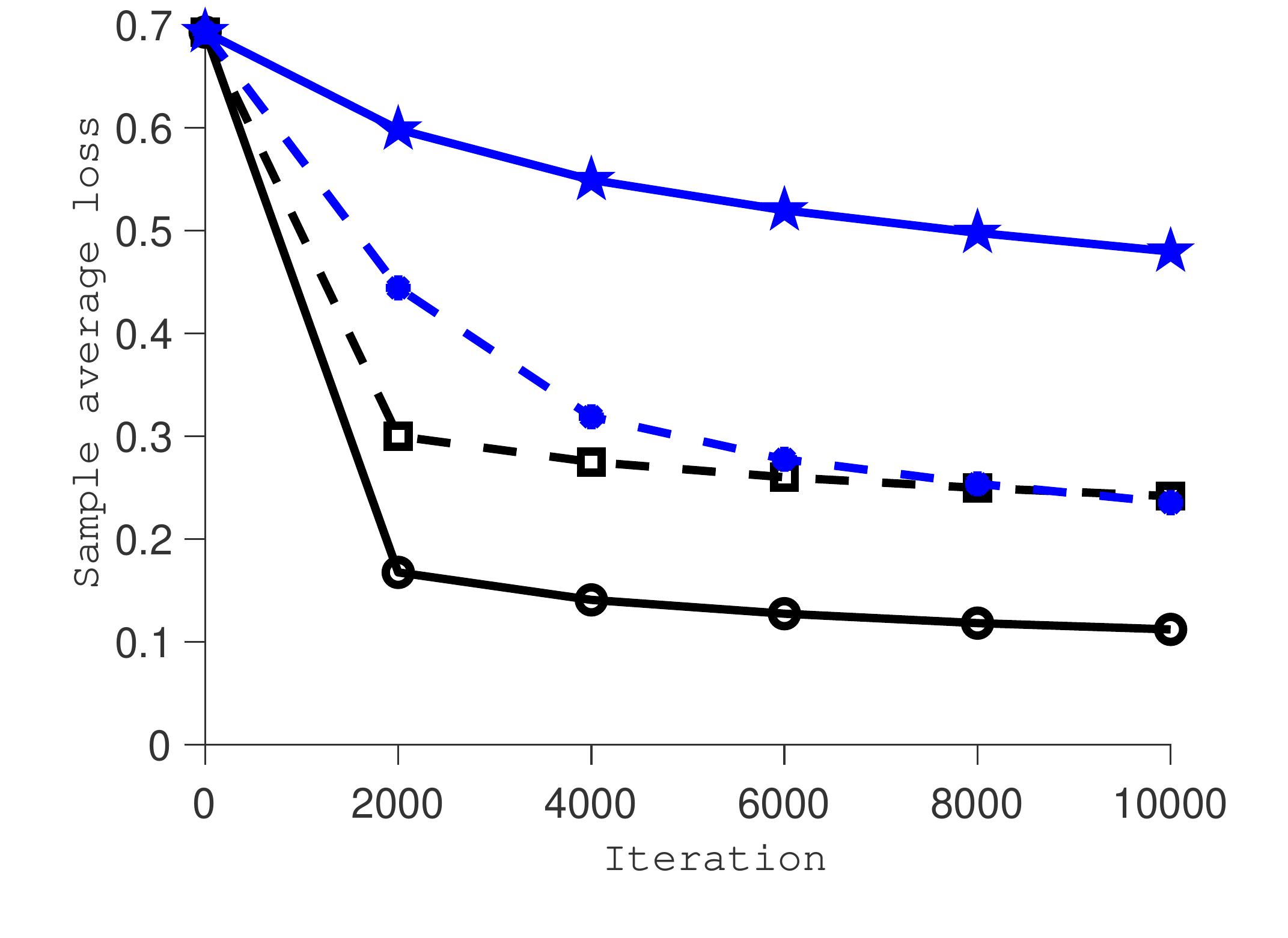}
\end{minipage}
\\
$5000$
&
\begin{minipage}{.3\textwidth}
\includegraphics[scale=.19, angle=0]{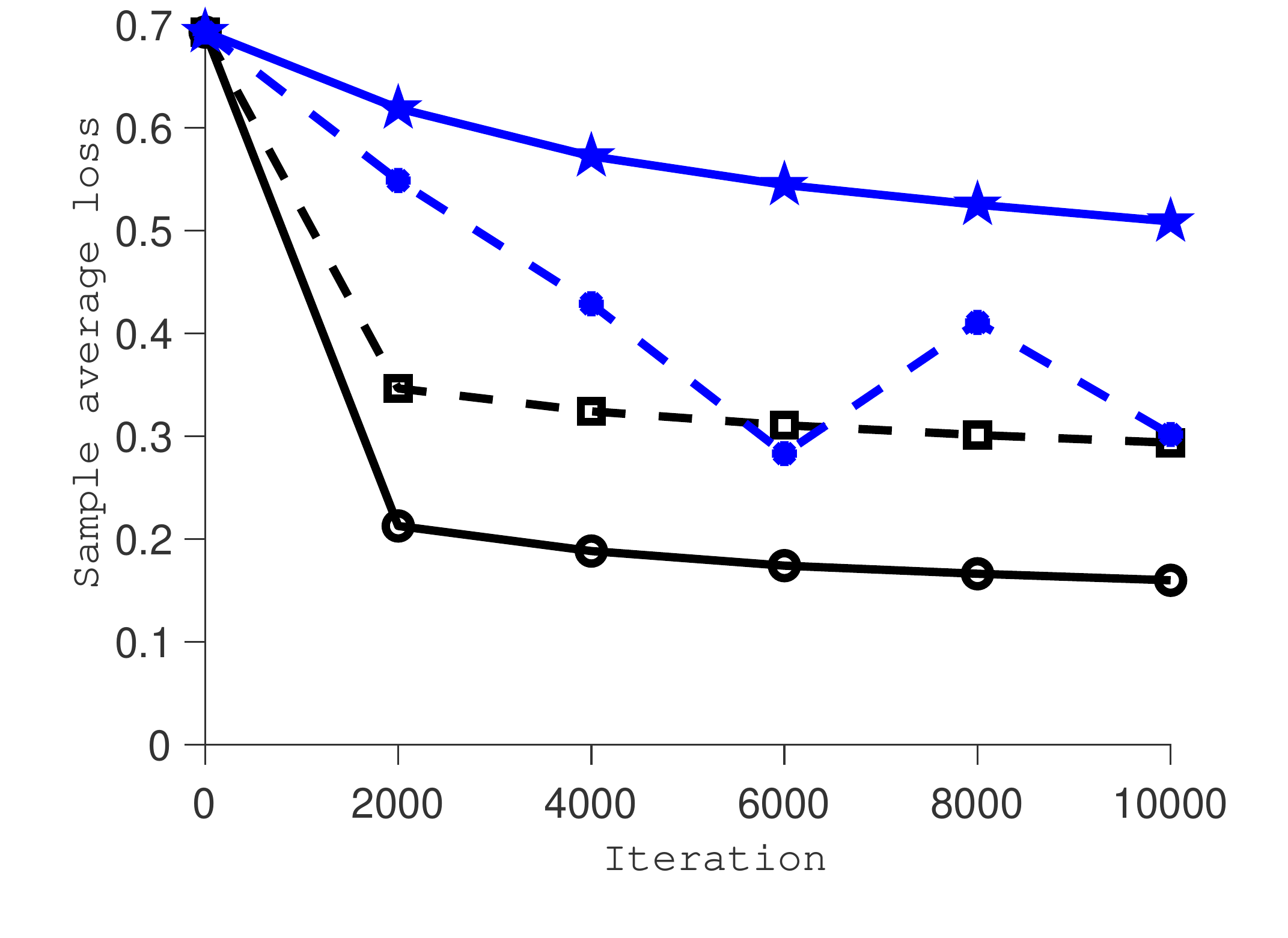}
\end{minipage}
&
\begin{minipage}{.3\textwidth}
\includegraphics[scale=.19, angle=0]{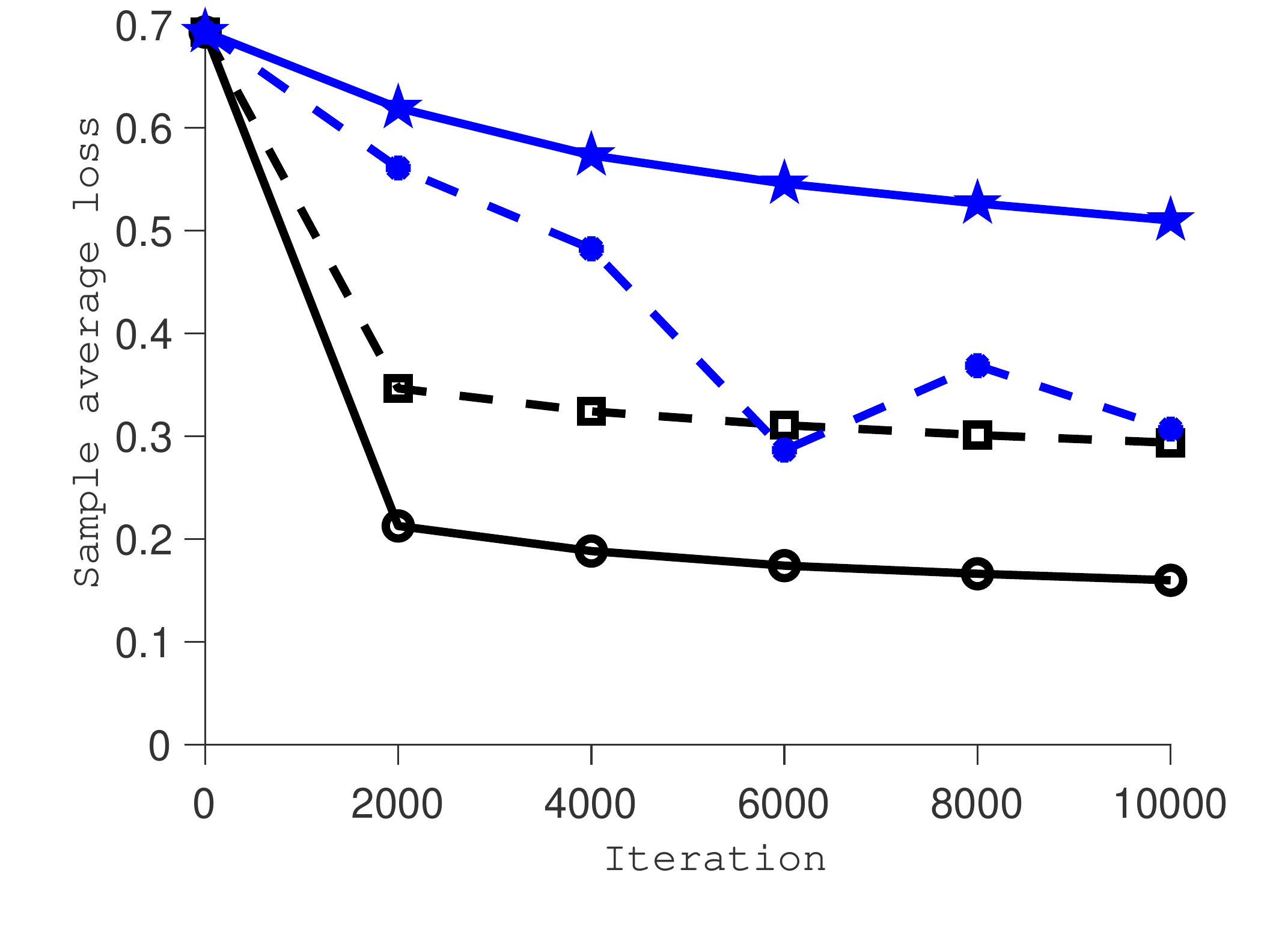}
\end{minipage}
&
\begin{minipage}{.3\textwidth}
\includegraphics[scale=.19, angle=0]{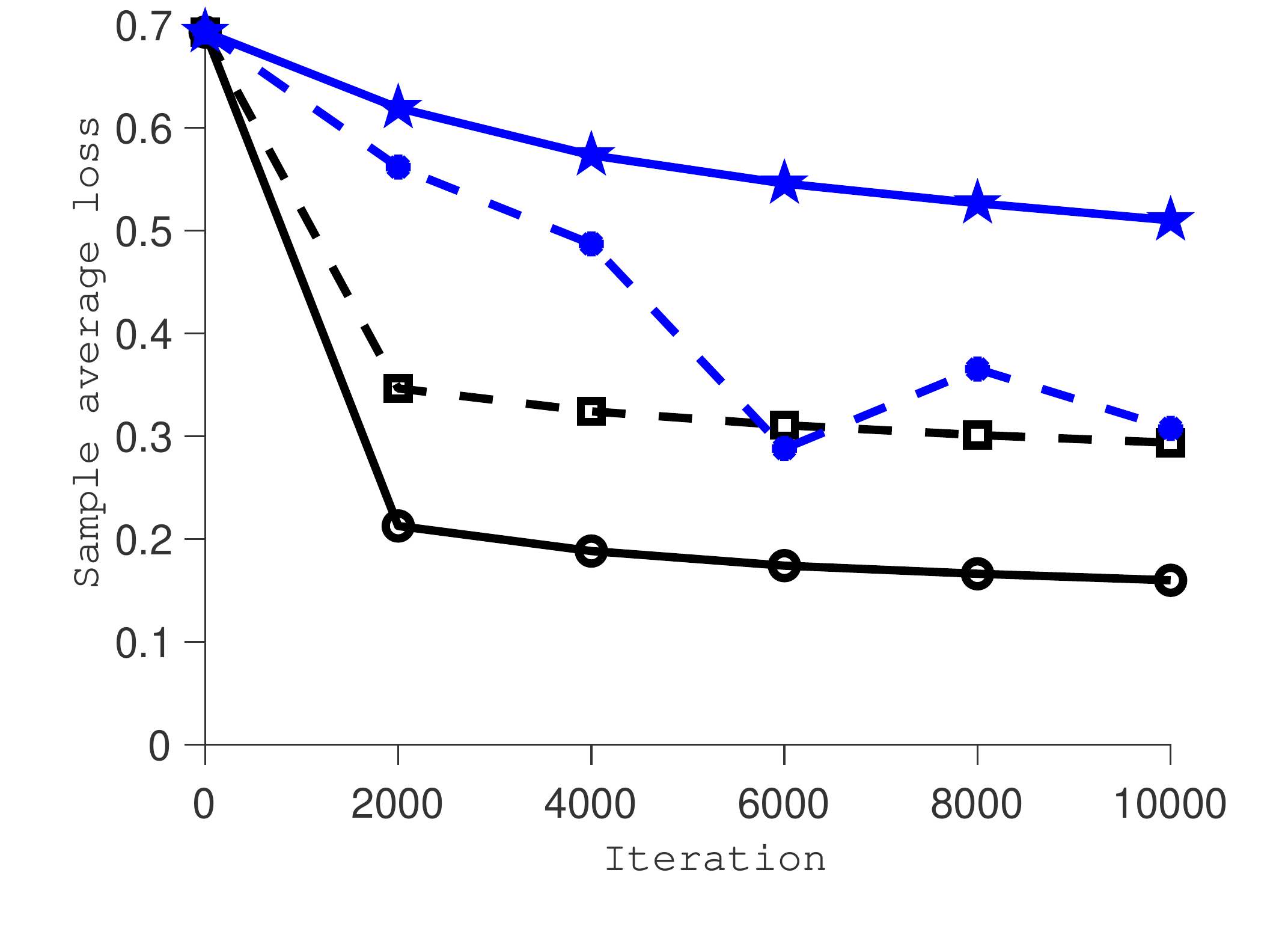}
\end{minipage}
\end{tabular}
\captionof{figure}{\footnotesize{Algorithm \ref{algorithm:IR-S-BFGS} vs. regularized IAG (non-averaging) with different choices of the regularization parameter, different choices of the stepsize, and different sample sizes.\\}}
\label{tableIAG}
\end{table} 

\subsection{Comparison with standard SQN schemes}\label{sec:num-reg}
\fy{To solve problem \eqref{logistic}, the standard LBFGS} methods in~\cite{Mokhtari15,nocedal15} solve an approximate problem of the form 
\begin{align}\label{logisticReg}\tag{Regularized LRM}
\min_{x \in \Real^n} f(x):=\frac{1}{N}\sum_{i=1}^N\ln \left(1+\exp\left(-u_i^Txv_i\right)\right)+\fy{\frac{\eta}{2}\|x\|^2},
\end{align}
where \fy{$\eta>0$} is an arbitrary regularization parameter. \fy{To perform the first experiment, we consider comparison of Algorithm \ref{algorithm:IR-S-BFGS} with three variants of the standard LBFGS schemes, all denoted by RS-LBFGS (see Figure~\ref{m2}). In RS-LBFGS schemes, we use the stepsize of the form $\gamma_k=\frac{\g_0}{{k+1}}$ and drop $\eta$ at epochs of 400 iterations using a decay factor denoted by $\rho \in (0,1]$. Of these, in the first scheme, we assume $\rho=1$, meaning that $\eta$ is kept constant throughout the implementation of the SQN scheme. In the second scheme, we use $\rho=0.5$. This means for example, after every $400$ iterations, we set $\eta:=0.5\eta$. In the third scheme, we use $\rho=0.3$. \fys{We let $\g_0 \in \{10,0.5,0.1\}$, $\eta_0 \in \{1,0.5,0.01\}$, $m \in \{2,5\}$, $N=10^4$, and $x_0$ be the origin.} In all cases, we use five sample paths to calculate the average value of the objective function in \eqref{logistic}.

\textbf{Insights:} We observe that Algorithm \ref{algorithm:IR-S-BFGS} performs uniformly better than the three variants of the standard SQN scheme under different tuning rules for the regularization parameter. This suggests that for merely convex stochastic optimization, SQN schemes using the tuning rules for the stepsize and regularization parameter given as $\g_k \approx 1/\sqrt[3]{k^2}$ and $\mu_k\approx1/\sqrt[3]{k}$ have a faster convergence speed. 

\subsection{Comparison with SAGA \vus{on merely convex
problems}}\label{sec:num-SAGA} Recall that in addressing the finite-sum
minimization problems with merely convex objectives, employing averaging and
under a constant stepsize, SAGA admits a sublinear convergence rate of
$\mathcal{O}\left(\frac{N}{k}\right)$~\cite{Saga14}. The simulation results are
provided in Figure \ref{tableSaga}. These results include different sample
sizes, i.e., $N \in \{10^3,10^4,10^5\}$, different initial conditions for SAGA,
and different choices of the stepsizes and the initial regularization parameter
for Algorithm \ref{algorithm:IR-S-BFGS}. SAGA uses the evaluation of the
gradient map of the component \fyfy{functions} at the starting
point. \fyfy{Here, we use three different values for the evaluated
gradient maps} at the starting point, i.e., the origin, to \fys{study} the
sensitivity of SAGA with respect to the initial conditions. Of these, in
initial condition 3, we use the exact value of the gradient maps, while in
initial condition 2, we  perturb values of the gradient maps. This perturbation
is increased in initial condition 1. 

\textbf{Insights:} From Figure \ref{tableSaga}, we observe that Algorithm \ref{algorithm:IR-S-BFGS} competes well with SAGA. We discuss the comparisons as follows: (i) A computational burden in implementation of SAGA is the memory requirement of this scheme. Generally speaking, SAGA requires storing a matrix of  $\mathcal{O}(Nn)$ at each iteration. Exceptions include the case where the objective function is in terms of a linear regression model function (e.g., in \eqref{logistic}). This is in contrast with Algorithm \ref{algorithm:IR-S-BFGS} where the memory requirement is $\mathcal{O}(mn)$. (ii) As expected, the performance of SAGA deteriorates when the sample size increases. However, the performance of Algorithm \ref{algorithm:IR-S-BFGS} seems to be more robust with respect to the increase in the sample size. (iii) The performance of SAGA seems to be moderately sensitive to the initial conditions. 

\subsection{Comparison with IAG}\label{sec:num-IAG}
Recall that in solving finite-sum minimization problems with $\mu$-strongly convex objectives, using a constant stepsize, (non-averaging) IAG admits a linear convergence rate of $\mathcal{O}\left(\left(1-(\mu/N)^2\right)^{2k}\right)$ where $N$ is the number of component functions (cf.~\cite{Iag17}). Accordingly, to do the numerical comparisons with IAG, we regularize problem \eqref{logistic} with a constant $\mu_{IAG}>0$. Figure \ref{tableIAG} shows the simulation results for different choices of $\mu_{IAG}$, $N$, IAG stepsize, and the initial stepzie and regularization parameter of Algorithm \ref{algorithm:IR-S-BFGS}. 

\textbf{Insights:} (i) Due to the excessive memory requirements of $\mathcal{O}(nN)$ \vus{associated with} IAG, \us{such a scheme becomes challenging to implement when $n$ becomes large as in this case where} $n=138,921$. Consequently, \vus{we use a sample size} $N \in \{1000,2000,5000\}$. However, Algorithm \ref{algorithm:IR-S-BFGS} only requires memory of $\mathcal{O}(nm)$, allowing for implementations with large values of $N$. (ii) Similar to SAGA, the performance of IAG is deteriorated when the sample size increases. However, the performance of Algorithm \ref{algorithm:IR-S-BFGS} seems to be more robust with changes in the sample size. (iii) \fyfy{For} each fixed value of $N$, despite the change in the value of $\mu_{IAG}$, the performance of IAG in terms of the true objective function in \eqref{logistic} does not necessarily improve. \fys{Importantly, this observation suggests that in the standard regularization approach, tuning the regularization parameter could be computationally expensive.}}  

\section{Concluding remarks}\label{sec:conc}
We consider stochastic quasi-Newton (SQN) methods for solving large scale stochastic optimization problems with smooth but unbounded gradients. Much of the past research on convergence rates of these algorithms relies on the strong convexity of the objective function. \fys{We employ an iterative regularization scheme where the regularization parameter is updated iteratively within the algorithm. We establish the convergence in an a.s. sense and a mean sense. Moreover, }we prove that the iterates generated by the \fy{iteratively} regularized stochastic LBFGS scheme converges to an optimal solution at the rate $\mathcal{O}\left(\frac{1}{k^{1/3-\e}}\right)$ for arbitrary small $\e>0$. The deterministic variant of this algorithm achieves the rate $\mathcal{O}\left(\frac{1}{k^{1-\e}}\right)$. \fy{The numerical experiments performed on a large scale classification problem indicate that the proposed LBFGS scheme performs well compared to methods such as standard SQN schemes, and other first-order schemes such as SAGA and IAG}.  

\fy{\section{Appendix}\label{App}
\subsection{Proof of Lemma \ref{sumProductBounds}}\label{app:sumProductBounds}
From $0<a_1 \leq \ldots \leq a_n$, we can write 
\begin{align*}
(n-(i-1))a_i \leq \sum_{j=1}^na_j, \qquad \hbox{for all } i \in \{1,\ldots,n\}.
\end{align*}
Invoking $\sum_{i=1}^n{a_i} \leq S$, we obtain $a_i \leq \frac{S}{n-(i-1)}$, for all $i \in \{1,\ldots,n\}$. From the preceding relation and that $\prod_{j=1}^na_j \geq P$, we can obtain $a_1 \geq (n-1)!P/S^{n-1}.$
}
\subsection{Proof of Lemma \ref{LBFGS-matrix}}\label{app:lemmaLBFGSmatrix}
\begin{proof} \fys{Throughout, we let $\lambda_{k,\min}$, $\lambda_{k,\max}$, and $B_k$ denote the minimum eigenvalue, maximum eigenvalue, and inverse of matrix $H_k$ in \eqref{eqn:H-k-m}, respectively.}
It can be seen, by induction on $k$, that \fy{$H_k$ is symmetric and $\sF_k$ measurable.} We use induction on odd values of \fys{$k\geq 2m-1$ to show that parts (a), (b), and (c) hold.} Suppose \fy{$k\geq 2m-1$} is odd and for any \fys{odd} $t<k$, we have \fy{$s_{\lceil t/2\rceil}^T{y_{\lceil t/2\rceil}} >0$, $H_{t}{y}_{\lceil t/2\rceil}=s_{\lceil t/2\rceil}$}, and \eqref{proof:H_kbounds} for $t$. We show that these statements also hold for $k$ \fys{as well}. First, we \fys{show} that the \fy{curvature condition} holds. We can write 
\begin{align*}
\fys{s_{\lceil k/2\rceil}^Ty_{\lceil k/2\rceil}}&=(x_{k}-x_{k-1})^T(\nabla F(x_k,\xi_{k-1})- \nabla F(x_{k-1},\xi_{k-1})+\fys{\tau}\mu_k^\delta(x_k-x_{k-1}))\\
&=(x_{k}-x_{k-1})^T(\nabla F(x_k,\xi_{k-1})- \nabla F(x_{k-1},\xi_{k-1}))+\fys{\tau}\mu_k^\delta\|x_k-x_{k-1}\|^2\\
&\geq \fys{\tau}\mu_k^\delta\|x_k-x_{k-1}\|^2,
\end{align*}
where the inequality follows from the monotonicity of the gradient map $\nabla F(\cdot,\xi)$. \fys{Next, we show that $\|x_k-x_{k-1}\|^2 >0$.} From the induction hypothesis \fys{and that $k-2$ is odd}, $H_{k-2}$ is positive definite.
\fys{Moreover, from the update rule \eqref{eqn:H-k} and that $k-2$ is odd, we have $H_{k-1}=H_{k-2}$.}  
Therefore, $H_{k-1}$ is \fys{also} positive definite. \fy{Without loss of generality, we assume $\nabla F(x_{k-1},\xi_{k-1})+\mu_{k-1}(x_{k-1}-x_0)\neq 0$}\footnote{If $\nabla F(x_{k},\xi_k)+\mu_{k}(x_k-x_0)=0$\fys{,} then we can draw a new sample of $\xi_k$ to satisfy the relation.}.
Since $H_{k-1}$ is positive definite, 
we have $$H_{k-1}\left(\nabla F(x_{k-1},\xi_{k-1})+\mu_{k-1}\fy{(x_{k-1}-x_0)}\right) \neq 0,$$ implying that 
$x_{k} \neq x_{k-1}$. Hence
$\fy{s_{\lceil k/2\rceil}^T{y_{\lceil k/2\rceil}} }\geq  \fys{\tau}\mu_k^\delta\|x_k-x_{k-1}\|^2 >0,$
where \us{the second inequality is a consequence of} $\fys{\tau},\mu_k>0$.
Thus, the \fy{curvature condition} holds.
Next, we show that \eqref{proof:H_kbounds} holds for $k$. It is well-known that using the Sherman-Morrison-Woodbury formula, $B_k$ is equal to $B_{k,m}$ given by 
\begin{align}\label{equ:B_kLimited}
B_{k,j}=B_{k,j-1}-\frac{B_{k,j-1}s_is_i^TB_{k,j-1}}{s_i^TB_{k,j-1}s_i}+\frac{y_iy_i^T}{y_i^Ts_i}, \quad i:=\fy{{\lceil k/2\rceil}-(m-j)}, \quad 1 \leq j \leq m,
\end{align}
where $s_i$ and $y_i$ are defined by \eqref{equ:siyi-LBFGS} and \fy{$B_{k,0}=\frac{y_{\lceil k/2\rceil}^Ty_{\lceil k/2\rceil}}{s_{\lceil k/2\rceil}^Ty_{\lceil k/2\rceil}}\mathbf{I}$}. \fys{Note that with $j$ varying between $1$ to $m$, the index $i$ takes values in $\left\{\lceil k/2\rceil-m+1,\lceil k/2\rceil-m+2,\ldots,\lceil k/2\rceil\right\}$.} First, we show that for any $i$ \fys{in this range}, \begin{align}\label{equ:boundsForB0}
\fys{\tau}\mu_k^\delta \leq \frac{\|y_i\|^2}{y_i^Ts_i} \leq L+\fys{\tau}\mu_k^\delta,
\end{align}
 where $L$ is the Lipschitzian parameter of the gradient mapping $\nabla F$ \fys{given by Assumption \ref{assum:convex2}(b)}. Let us \fys{define the function $h(x)\triangleq F(x,\xi_{i-1})+\tau\frac{\mu_k^\delta}{2}\|x-x_0\|^2$} for fixed $i$ and $k$. Note that this function is strongly convex and has a gradient mapping of the form $\nabla F+\fys{\tau}\mu_k^\delta(\mathbf{I}-x_0)$ that  is Lipschitz with parameter $L+\fys{\tau}\mu_k^\delta$. For a convex function $h$ with Lipschitz gradient with parameter $L+\fys{\tau}\mu_k^\delta$, the following inequality, referred to as co-coercivity  property, holds for any $x_1,x_2 \in \Real^n$ \fys{(see}\cite{Polyak87}, Pg. 24 , Lemma 2):
 \[\|\nabla h(x_2)-\nabla h(x_1)\|^2 \leq \left(L+\fys{\tau}\mu_k^\delta\right)(x_2-x_1)^T(\nabla h(x_2)-\nabla h(x_1)).\]
Substituting $x_2$ by $x_i$, $x_1$ by $x_{i-1}$, and recalling \eqref{equ:siyi-LBFGS}, the preceding inequality yields
\begin{align}\label{ineq:boundsForB0-1}\|y_i\|^2 \leq \left(L+\fys{\tau}\mu_k^\delta\right)s_i^Ty_i.\end{align}
Note that function $h$ is strongly convex \fy{with} parameter $\fys{\tau}\mu_k^\delta$. Applying the Cauchy-Schwarz inequality, we can write
\[\frac{\|y_i\|^2}{s_i^Ty_i} \geq \frac{\|y_i\|^2}{\|s_i\|\|y_i\|} =\frac{\|y_i\|}{\|s_i\|}\geq  \frac{\|y_i\|\|s_i\|}{\|s_i\|^2} \geq \frac{y_i^Ts_i}{\|s_i\|^2}\geq \fys{\tau}\mu_k^\delta.\]
Combining this relation with \eqref{ineq:boundsForB0-1}, we obtain \eqref{equ:boundsForB0}. Next, we show that the maximum eigenvalue of $B_k$ is bounded. Let $Trace(\cdot)$ denote the trace of a matrix. Taking trace from both sides of \eqref{equ:B_kLimited} and summing up over index $j$, we obtain \fy{for $i:={\lceil k/2\rceil}-(m-j)$,}
\begin{align}\label{ineq:trace}
Trace(B_{k,m})&=Trace(B_{k,0})-\sum_{j=1}^m Trace\left(\frac{B_{k,j-1}s_is_i^TB_{k,j-1}}{s_i^TB_{k,j-1}s_i}\right)+\sum_{j=1}^m Trace\left(\frac{y_iy_i^T}{y_i^Ts_i}\right)\cr
& =Trace\fy{\left(\frac{\|y_{\lceil k/2\rceil}\|^2}{s_{\lceil k/2\rceil}^Ty_{\lceil k/2\rceil}}\mathbf{I}\right)} - \sum_{j=1}^m \frac{\|B_{k,j-1}s_i\|^2}{s_i^TB_{k,j-1}s_i} + \sum_{j=1}^m \frac{\|y_i\|^2}{y_i^Ts_i}\cr 
&\leq n \fy{\frac{\|y_{\lceil k/2\rceil}\|^2}{s_{\lceil k/2\rceil}^Ty_{\lceil k/2\rceil}}} +\sum_{j=1}^m \left(L+\fys{\tau}\mu_k^\delta\right) \fy{\leq} (m+n)\left(L+\fys{\tau}\mu_k^\delta\right),
\end{align}
where the third relation is obtained by positive-definiteness of $B_k$ (this can be seen by induction on \fys{$j$}, and using \eqref{equ:B_kLimited} and $B_{k,0}\succ 0$). Since $B_k=B_{k,m}$, the maximum eigenvalue of the matrix $B_k$ is bounded \fys{by $(m+n)\left(L+\tau\mu_k^\delta\right)$}. As a result, 
\begin{align}\label{proof:lowerbound}
\lambda_{k,\min}\geq \frac{1}{(m+n)\left(L+\fys{\tau}\mu_k^\delta\right)}.\end{align}
 In the next part of the proof, we establish the bound for $\lambda_{k,\max}$. The following relation can be shown (e.g., see Lemma 3 in \cite{Mokhtari15})
\begin{align*}
det(B_{k,m})=det(B_{k,0})\prod_{j=1}^m\frac{s_i^Ty_i}{s_i^TB_{k,j-1}s_i}\fy{, \quad \hbox{for }i:={\lceil k/2\rceil}-(m-j)}.
\end{align*}
Multiplying and dividing by $s_i^Ts_i$, using the strong convexity of the function $h$, and invoking \eqref{equ:boundsForB0} and the result of \eqref{ineq:trace}, we obtain
\begin{align}\label{ineq:detBk}
det(B_{k})&=\fy{det\left(\frac{y_{\lceil k/2\rceil}^Ty_{\lceil k/2\rceil}}{s_{\lceil k/2\rceil}^Ty_{\lceil k/2\rceil}}\mathbf{I}\right)}\prod_{j=1}^m\left(\frac{s_i^Ty_i}{s_i^Ts_i}\right)\left(\frac{s_i^Ts_i}{s_i^TB_{k,j-1}s_i}\right)\cr  
& \geq\fy{\left(\frac{y_{\lceil k/2\rceil}^Ty_{\lceil k/2\rceil}}{s_{\lceil k/2\rceil}^Ty_{\lceil k/2\rceil}}\right)^n}\prod_{j=1}^m\fys{\tau}\mu_k^\delta\left(\frac{s_i^Ts_i}{s_i^TB_{k,j-1}s_i}\right)\cr
& \geq  \fys{\left(\tau\mu_k^\delta\right)^{(n+m)}} \prod_{j=1}^m \frac{1}{(m+n)\left(L+\fys{\tau}\mu_k^\delta\right)} = \frac{\fys{\left(\tau\mu_k^\delta\right)^{(n+m)}}}{(m+n)^{m}\left(L+\fys{\tau}\mu_k^\delta\right)^m}.
\end{align}
Let \fy{$\alpha_{k,1}\leq \alpha_{k,2}\leq\ldots\leq\alpha_{k,n}$} be the eigenvalues of $B_k$ sorted non-decreasingly. Note that since $B_k\succ0$, all the eigenvalues are positive. Taking \eqref{ineq:trace} and \eqref{ineq:detBk} into account, and employing Lemma \ref{sumProductBounds}, we obtain
 \[\alpha_{1,k} \fy{\geq} \frac{(n-1)!\fys{\left(\tau\mu_k^\delta\right)^{(n+m)}}}{(m+n)^{n+m-1}\left(L+\fys{\tau}\mu_k^\delta\right)^{n+m-1}}.\]
This relation and that $\alpha_{k,1}=\lambda_{k,\max}^{-1}$ imply that 
\begin{align}\label{proof:upperbound}
\lambda_{k,\max}\leq \frac{(m+n)^{n+m-1}\left(L+\fys{\tau}\mu_k^\delta\right)^{n+m-1}}{(n-1)!\fys{\left(\tau\mu_k^\delta\right)^{(n+m)}}}.
\end{align}
Therefore, from \eqref{proof:lowerbound} and \eqref{proof:upperbound} and that $\mu_k$ is non-increasing, we conclude that \eqref{proof:H_kbounds} holds for $k$ as well. Next, we show $H_ky_{\lceil k/2\rceil}=s_{\lceil k/2\rceil}$. From \eqref{equ:B_kLimited}, for $j=m$ we obtain
\[B_{k,m}=B_{k,m-1}-\fy{\frac{B_{k,m-1}s_{\lceil k/2\rceil}s_{\lceil k/2\rceil}^TB_{k,m-1}}{s_{\lceil k/2\rceil}^TB_{k,m-1}s_{\lceil k/2\rceil}}+\frac{y_{\lceil k/2\rceil}y_{\lceil k/2\rceil}^T}{y_{\lceil k/2\rceil}^Ts_{\lceil k/2\rceil}}},\]
where we used \fy{$i={\lceil k/2\rceil}-(m-m)={\lceil k/2\rceil}$}. Multiplying both sides of the preceding equation by \fy{$s_{\lceil k/2\rceil}$}, and using $B_k=B_{k,m}$, we have
\fy{$B_{k}s_k=B_{k,m-1}s_{\lceil k/2\rceil}-B_{k,m-1}s_{\lceil k/2\rceil}+y_{\lceil k/2\rceil}=y_{\lceil k/2\rceil}$}. 
Multiplying both sides of the preceding relation by $H_k$ and invoking $H_k=B_k^{-1}$, we conclude that \fy{$H_ky_{\lceil k/2\rceil}=s_{\lceil k/2\rceil}$}. Therefore, we showed that the statements of (a), (b), and (c) hold for \fys{an odd $k$}, assuming that they hold for any odd \fy{$t<k$}. In a similar fashion to this analysis, it can be seen that the statements hold for \fys{$t=2m-1$}. Thus, by induction, we conclude that the statements hold for any odd \fy{$k\geq 2m-1$}. To complete the proof, it is enough to show that \eqref{proof:H_kbounds} holds for any even \fys{$k\geq 2m$}. Let $t=k-1$. Since \fys{$t$} is odd, relation \eqref{proof:H_kbounds} holds. Writing  \eqref{proof:H_kbounds} for $k-1$, and taking into account that $H_k=H_{k-1}$, and $\mu_k<\mu_{k-1}$, we can conclude that \eqref{proof:H_kbounds} holds for any even \fy{$k\geq 2m-1$} and this completes the proof.
\end{proof}

\subsection{Proof of Lemma \ref{lemma:a.s.sequences}}\label{app:feasibleSeqASconv}
\begin{proof}
In the following, we show that the presented class of sequences satisfy each of the conditions listed in Assumption \ref{assum:sequences}. Throughout, we let $\alpha$ denote \fy{$-(m+n)\delta$}.\\
(a) Replacing the sequences by their given rules, we obtain \fy{\begin{align*} \g_k\mu_k^{2\alpha-1} & =\frac{\g_0}{(2^b\mu_0)^{1-2\alpha}}(k+1)^{-a}(k+\kappa)^{(1-2\alpha) b}\leq  \frac{\g_0}{(2^b\mu_0)^{1-2\alpha}}(k+1)^{-a+(1-2\alpha) b}.\end{align*}}
\fy{From the assumption that $\frac{a}{b}>1+2\delta(m+n)$, we obtain$-a+(1-2\alpha) b<0$}. Thus, the preceding term goes to zero verifying Assumption \ref{assum:sequences}(a).\\
(b) Let $k$ be an even number. Thus, $\kappa=2$. From \eqref{equ:seq} we have $\mu_{k}=\mu_{k+1}=\frac{\mu_02^b}{\left(k+2\right)^b}$. Now, let $k$ be an odd number. Again, according to \eqref{equ:seq} can write 
\[\mu_{k+1}=\frac{\mu_02^b}{\left((k+1)+2\right)^b }<\frac{\mu_02^b}{\left(k+1\right)^b}= \frac{\mu_02^b}{\left(k+\kappa\right)^b}=\mu_k.\]
Therefore, $\mu_k$ given by \eqref{equ:seq} satisfies \eqref{eqn:mu-k}. Also, from \eqref{equ:seq} we have $\mu_k \to 0$. Thus, Assumption \ref{assum:sequences}(b) holds.\\
(c) The given rules \eqref{equ:seq} imply that $\g_k$ and $\mu_k$ are both non-increasing sequences. Therefore, we have $\g_k\mu_{k} \leq \g_0\mu_{0}$ for any $k\geq 0$. So, to show that Assumption \ref{assum:sequences}(c) holds, it is enough to show that $\lambda_{\min}\g_0\mu_{0} \leq 1$ where $\lambda_{\min}$ is given by \eqref{equ:valuesForAssumH_k}. Since we assumed that $\g_0\mu_0 \leq L(m+n)$, for any $\delta \in (0,1]$, we have $\g_0\mu_0 \leq (m+n)(L+\mu_0^\delta)$, implying that $\lambda_{\min}\g_0\mu_{0} \leq 1$ and that Assumption \ref{assum:sequences}(c) holds.\\
(d) From \eqref{equ:seq}, we can write $$\sum_{k=0}^\infty \g_k\mu_k =\g_0\mu_02^b\sum_{k=0}^\infty (k+1)^{-a}(k+\kappa)^{-b} \geq \g_0\mu_02^b\sum_{k=0}^\infty (k+2)^{-(a+b)}= \infty,$$
where the last relation is due to $a+b\leq 1$. Therefore, Assumption \ref{assum:sequences}(d) holds.\\
(e) Using \eqref{equ:seq}, it follows
\fy{\begin{align*}&\sum_{k=0}^\infty \g_k\mu_k^2 =  \g_0\mu_0^24^b\sum_{k=0}^\infty (k+1)^{-a}(k+\kappa)^{-2b}\leq   \g_0\mu_0^24^b\sum_{k=0}^\infty (k+1)^{-(a+2b)}<\infty,\end{align*}}
where the last inequality is due to \an{$a+2b>1$}. Therefore, Assumption \ref{assum:sequences}(e) holds.\\
(f) From \eqref{equ:seq}, we have
\begin{align*}&\sum_{k=0}^\infty \g_k^2\mu_k^{2\alpha} 
\leq  \g_0^2(\mu_02^b)^{2\alpha}\left(\sum_{k=0}^1 \frac{(k+\kappa)^{-2\alpha b}}{(k+1)^{2a}}+\sum_{k=2}^\infty \frac{(2k)^{-2\alpha b}}{k^{2a}}\right)<\infty 
\end{align*}
where in the first inequality, we use $\alpha <0$ and in the last inequality, we note that 
$a+\alpha b=a-\delta\left(m+n\right)b>0.5$. Therefore, Assumption \ref{assum:sequences}(f) is verified.
\end{proof}

\subsection{Proof of Lemma \ref{lemma:mean-sequences}}\label{app:feasibleSeqMeanConv}
\begin{proof}
\fy{Throughout, we let $\alpha$ denote $-\delta(m+n)$.} \fys{Assumption \ref{assum:sequences-ms-convergence}(a, b, c) and \eqref{eqn:mu-k} have been already shown in parts (a, b, c) of the proof of Lemma \ref{lemma:a.s.sequences}.}\\
\noindent
(d) It suffices to show there exists \fy{$K_0$} such that for any $k \geq K_0$ and $0<\beta<1$, 
\begin{align}\label{ineq:partd}\frac{\g_{k-1}}{\g_k}\frac{\mu_{k}^{1-2\alpha}}{\mu_{k-1}^{1-2\alpha}}-1\leq \beta \lambda_{\min}\g_k\mu_k.\end{align}
From \eqref{equ:seq} \fy{and the definition of $\alpha$}, we obtain 
\begin{align*}&\frac{\g_{k-1}}{\g_k}\frac{\mu_{k}^{1-2\alpha}}{\mu_{k-1}^{1-2\alpha}}-1
\leq \frac{\g_{k-1}}{\g_k}-1=\left(1+\frac{1}{k}\right)^a-1= 1+\frac{a}{k}+\fy{{o}\left(\frac{1}{k}\right)}-1=\mathcal{O}\left(\frac{1}{k}\right),
\end{align*}
where the first inequality is implied due to $\{\mu_k\}$ is non-increasing, and in the second equation, we used the Taylor's expansion of $\left(1+\frac{1}{k}\right)^a$. Therefore, since the right-hand side of \eqref{ineq:partd} is of the order $\frac{1}{k^{a+b}}$ and that $a+b<1$, the preceding inequality shows that such $K_0$ exists for which Assumption \ref{assum:sequences-ms-convergence}(d) holds for all $0<\beta<1$.\\
\noindent
(e) From \eqref{equ:seq}, we have 
\begin{align*} \frac{\mu_k^{2-2\alpha}}{\g_k}& =\fy{\g_0}^{-1}\left(\mu_02^b\right)^{2-2\alpha}(k+\kappa)^{-b(2-2\alpha)}(k+1)^{a} \leq\frac{\fy{\gamma_0}^{-1}\left(\mu_02^b\right)^{2-2\alpha}}{(k+1)^{-a+(2-2\alpha)b}}\\&\leq \fy{\gamma_0}^{-1}\left(\mu_02^b\right)^{2-2\alpha}=\rho, \end{align*}
where the first inequality is due to \fy{$\alpha <0$, and the second inequality follows by the assumption $a\leq 2b(1+\delta(m+n))$}. Therefore, Assumption \ref{assum:sequences-ms-convergence}(e) is satisfied.
\end{proof}
\bibliographystyle{amsplain}
\bibliography{reference}
\end{document}